\numberwithin{equation}{section}
\newcommand{\R}{\mathbb{R}}
\newcommand{\eps}{\varepsilon}
\newcommand{\Laplace}{\Delta}
\newcommand{\rd}{\mathrm{d}}
\newcommand{\pd}{\partial}
\newcommand{\abs}[1]{\left| #1 \right|}
\newcommand{\norm}[1]{\| #1 \|}
\begin{document}

\title{Optimal control of elliptic PDEs at points}
\shorttitle{Optimal control of elliptic PDEs at points}

\author{%
{\sc 
Charles Brett\thanks{Corresponding author. Email: ceabrett@gmail.com. This work was supported by the UK Engineering and Physical Sciences Research Council (EPSRC) Grant EP/H023364/1.},
Andreas Dedner\thanks{Email: a.s.dedner@warwick.ac.uk}
and
Charles Elliott\thanks{Email: c.m.elliott@warwick.ac.uk}} \\[2pt]
Department of Mathematics, University of Warwick, Coventry, CV4 7AL, UK
}
\shortauthorlist{C. Brett, A. S. Dedner and C. M. Elliott}

\maketitle

\begin{abstract}
{We consider an elliptic optimal control problem where the objective
functional contains evaluations of the state at a finite number of
points. In particular, we use a fidelity term that encourages the
state to take certain values at these points, which means our problem
is related to ones with state constraints at points. The analysis and
numerical analysis differs from when the fidelity is in the $L^2$ norm because we need the state space to embed into the space of
continuous functions. In this paper we discretise the problem using
two different piecewise linear finite element methods. For each
discretisation we use two different approaches to prove a priori $L^2$
error estimates for the control. We discuss the differences between
these methods and approaches and present numerical results that agree
with our analytical results.}
{elliptic optimal control problem; point evaluations; finite element method; error estimates}
\end{abstract}

\label{chap:pointpde}

\section{Introduction}

In this paper we study an elliptic optimal control problem with an objective functional containing the distance between the state and prescribed values at a finite number of prescribed points. This contrasts with standard elliptic optimal control problems, where typically the objective functional contains the $L^2$ distance between the state and the desired state over the whole domain. So for a bounded domain $\Omega \subset \R^n$ ($n=2$ or $3$) with boundary $\partial \Omega$ we consider the problem:
\begin{equation*}
\min \frac{1}{2} \sum_{\omega \in I} (y(\omega)-g_\omega)^2 + \frac{\nu}{2} \norm{\eta}^2_{L^2(\Omega)}
 \end{equation*}
subject to the state equation
\begin{equation} \label{eqn:ctos}
\begin{aligned}
Ay &= \eta \quad \text{ in } \Omega \\
y &= 0 \quad \text{ on } \partial \Omega
\end{aligned}
\end{equation}
and the control constraints
\[
a \leq \eta \leq b.
\]
Here $I \subset \Omega$ is a finite set of points, $\{ g_\omega \}_{\omega \in I} \subset \R$ are prescribed values at these points, $\nu > 0$ is the cost of control, $A$ is an elliptic operator, and $a,b \in \R$ with $a<b$ are lower and upper bounds for the control. We give the precise statement of the problem using function spaces in Section~\ref{sec:probformpoint}.

The motivation for the point fidelity term is that in some
applications we may only care about the state being close to given
values at certain points in the domain. Controlling the state using
a distributed norm over the whole domain yields weaker control at
points. The point fidelity term encourages the state to take certain
values at points, so our problem is closely related to one which
imposes hard constraints on the state at points. Imposing hard state
constraints can often lead to an optimal control with a very high
cost, whereas our point fidelity term allows for a compromise between
how close the state is to the prescribed values and the cost of the
control. On the other hand, we will prove later that as we increase
the relative weighting given to the point fidelity term, the solutions
of point control problems converge weakly to the solution of a problem
with point state constraints. 

In the literature there are computational results for PDE optimal control problems with objective functionals that contain point evaluations of the state. However we have not found any literature that contains a numerical analysis of such problems. The book \cite{Troltzsch2005} formulates an optimal control problem where the objective functional is the state evaluated at a point, but does not discuss numerical methods for solving it. The paper \cite{Unger2001} considers optimally controlling the cooling of steel. This problem is formulated with an objective functional that contains the temperature of the steel at a number of points (i.e.\ point evaluations of the state) as this makes the problem more tractable. The paper focuses on computational results and the numerical analysis is not considered. The medical imaging problem of electrical impedence tomography (see e.g.\ \cite{Hintermuller2008}) could be formulated as an inverse problem with a point fidelity term (but with the points on the boundary). This is because one reconstructs a conductivity based on measurements of the voltage over small regions, which could be approximated by measurements at points. In the paper \cite{Brett2013} (written by ourselves) the point fidelity term is used for the optimal control of elliptic variational inequalities. The difficulty of the nonlinear control-to-state operator means that an a posteriori error estimator is derived but a priori error estimates are not considered.

Our aim is to fill a gap in the literature by studying in detail the numerical analysis of a finite element approximation of the above point control problem, which could be considered the canonical optimal control problem with an objective functional containing point evaluations of the state. However related problems have been considered in the literature. The recent paper \cite{Gong2014} considers elliptic optimal control problems with controls at points and on other lower dimensional manifolds. The numerical analysis of these problems leads to mathematical difficulties similar to those in this paper. In particular, when the control is at points the state equation has delta functions on the right hand side, where as in our problem the adjoint equation has delta functions. In both cases this means low regularity of the state/adjoint. In the paper \cite{BrettLine} and thesis \cite{BrettThesis} theory is developed for an elliptic optimal control problem where the fidelity term is an integral along a surface of codimension 1, which is also a set of measure zero relative to the domain. In papers such as \cite{Casas2012} and \cite{Pieper2013} elliptic optimal control problems are considered where the control spaces are spaces of measures.

Regularity issues are also faced by elliptic optimal control problems with state constraints. The paper \cite{Leykekhman2013} proves error estimates for problems with state constraints at a finite number of points. Note that this paper also proves improved error estimates for graded triangulations (such triangulations are locally refined towards the singularities but have asymptotically the same number of elements for a given triangulation size), but we do not consider these. The paper \cite{Deckelnick2007} proves error estimates for the case of global (as opposed to point) state constraints, but for a state equation with Neumann boundary conditions. Parabolic optimal control problems often contain point evaluations in time of the state, but these are functions over the space domain and the technicalities of the numerical analysis are different. A review of the analysis for standard elliptic and parabolic optimal control problems can be found in \cite{Troltzsch2005} and a review of the numerical analysis can be found in \cite{Hinze2009}.

In this paper we use two different methods of discretising our problem with finite elements. The first method is to explicitly discretise the control by minimising over a space of discrete controls, leading to discrete problem $(\mathrm{M}1_h)$ (see (\ref{eqn:controlprob2point})). The second method is to implicitly discretise the control through a discrete control-to-state operator using the variational discretisation concept from \cite{Hinze2005}, leading to discrete problem $(\mathrm{M}2_h)$ (see (\ref{eqn:controlprob2point2})). We later observe that when there are no control constraints these two methods may lead to equivalent discrete problems. We are not able to prove an estimate for $(\mathrm{M}1_h)$ in dimension 3 with control constraints, which motivates us to use $(\mathrm{M}2_h)$ for our implementation despite it being less standard to solve computationally.

Next we use two different approaches to prove a priori error estimates for the $L^2(\Omega)$ error in the control for these discrete problems. The first approach (Approach~1, Section~\ref{sec:semidisc}) is inspired by the paper \cite{Casas2003} and the second approach (Approach~2, Section~\ref{sec:energy}) is inspired by the paper \cite{Deckelnick2007}. The main estimates we prove are summarised in Table~\ref{tab:sumest}, where $\eps>0$ is arbitrary. We see that Approach~2 does not offer any better error estimates than Approach~1. However we include Approach~2 because it is simpler when it applies. Numerical results confirm that the error estimates are realised for $(\mathrm{M}2_h)$.

\begin{table}
\tblcaption{The main a priori error estimates proved for $\norm{u-u_h}_{L^2(\Omega)}$.}
{%
\begin{tabular}{@{}lccc@{}}
\tblhead{Discretisation & $(\mathrm{M}1_h)$ & $(\mathrm{M}1_h)=(\mathrm{M}2_h)$ & $(\mathrm{M}2_h)$\\
Dimensions & $n=2$ & $n=2,3$ & $n=2,3$ \\
Constraints & both & $b=-a=\infty$ & both}
Approach 1 & $O(h)$ & $O(h^{2-\frac{n}{2}})$ & $O(h^{2-\frac{n}{2}})$ \\
Approach 2 & - & $O(h^{2-\frac{n}{2}-\eps})$ & $O(h^{2-\frac{n}{2}-\eps})$ \\
Numerics & - & $O(h^{2-\frac{n}{2}})$ & $O(h^{2-\frac{n}{2}})$
\lastline
\end{tabular}
}%
\label{tab:sumest}
\end{table}

In the next section we introduce some notation. In Section~\ref{sec:probformpoint} we formulate the optimal control problem precisely and prove some analytical results. In Section~\ref{sec:discpoint} we discretise using the finite element method. In Section~\ref{sec:numanalpoint} we prove a priori error estimates for the $L^2$ error in the control. In Section~\ref{sec:pointnumerics} we show numerical results.

\section{Notation}
\label{sec:notation}


We begin by introducing some function spaces that are needed to formulate the optimal control problem precisely.

Let the domain $\Omega \subset \R^n$ ($n=2$ or $3$) be a bounded open set that either has a $C^{1,1}$ boundary or is convex with a polygonal (for $n=2$) or polyhedral (for $n=3$) boundary. Both $C(\bar{\Omega})$ and its subspace $C_0(\Omega)$ (of functions that are zero on $\partial \Omega$) are Banach spaces when endowed with
the supremum norm, $\norm{\cdot}_\infty$. For $n = 2$ or $3$ the Sobolev space $H^2(\Omega)$ is continuously embedded into $C(\bar{\Omega})$ (see e.g.\ \cite{Adams2003}), so $H^2(\Omega) \cap
H_0^1(\Omega)\subset C_0(\Omega)$. By different versions of the Riesz
Representation Theorem (see e.g.\ Theorems~2.14 and 6.19 in \cite{Rudin}) the dual spaces of $C(\bar{\Omega})$ and $C_0(\Omega)$ can both be identified with the
space $\mathcal{M}(\Omega)$ of real regular Borel measures on $\Omega$. In particular, for $\mu \in \mathcal{M}(\Omega)$ and $v \in C(\bar{\Omega})$ define the duality pairing 
\[
\langle \mu, v \rangle_{\mathcal{M}(\Omega)} := \int_\Omega v \rd \mu,
\]
where the integral is the Lebesgue integral with respect to $\mu$. Here $\langle \mu, v \rangle_{\mathcal{M}(\Omega)}$ abbreviates $\langle \mu, v \rangle_{\mathcal{M}(\Omega), C(\bar{\Omega})}$. Then for each $z \in C(\bar{\Omega})^*$ there exists a unique $\mu \in \mathcal{M}(\Omega)$ such that
\begin{equation}
\label{eqn:zzxz}
z(v) = \langle \mu, v \rangle_{\mathcal{M}(\Omega)} \quad \forall v \in C(\bar{\Omega}) . 
\end{equation}
The same result holds for $z \in C_0(\Omega)^*$ using the same definition of $\langle \mu, v \rangle_{\mathcal{M}(\Omega)}$ but with $v \in C_0(\Omega)$. We prefer to write $\int_\Omega v \rd \mu$ but will sometimes use $\langle \mu, v \rangle_{\mathcal{M}(\Omega)}$ to simplify notation.
Note that $\mathcal{M}(\Omega)$ is a Banach space with the norm
\[
 \norm{\mu}_{\mathcal{M}(\Omega)} :=  \abs{\mu}(\Omega) = \sup \Big \{ \int_\Omega v \rd \mu: v \in C_0(\Omega) \text{ and } \norm{v}_\infty \leq 1 \Big \},
\]
where $\abs{\mu}$ is called the total variation of $\mu$. For example, the Dirac measure centred at a point $\omega \in \Omega$, which we denote by $\delta_\omega$, is contained in $\mathcal{M}(\Omega)$ and $\norm{\delta_\omega}_{\mathcal{M}(\Omega)} = 1$. 

We will need the following embedding results for the Sobolev spaces $W^{1,s}(\Omega)$, where $V \hookrightarrow W$ denotes that $V$ is continuously embedded into $W$.
\begin{remark}
\label{rem:emb}
From \cite{Adams2003} we have that:
\begin{itemize}
\item For $s>n$,  $W^{1,s}(\Omega) \hookrightarrow C(\bar{\Omega})$;
\item For $s>\frac{2n}{n+2}$, $W^{1,s}(\Omega)\hookrightarrow L^2(\Omega)$;
\item For $s<\frac{2n}{n-2}$, $H^2(\Omega) \hookrightarrow W^{1,s}(\Omega)$.
\end{itemize}
\end{remark}

Consider the Dirichlet problem (\ref{eqn:ctos}), where the differential operator $A$ acting on a function $z:\Omega \to \R$ is defined by
\begin{equation*}
 A z = - \sum_{i,j=1}^n \pd_{x_j}(a_{ij} \pd_{x_i} z) + a_0 z
\end{equation*}
with 
\begin{align*}
&a_0 \in L^\infty(\Omega), \quad a_0(x) \geq 0 \quad \text{ for a.e. } x \in \Omega, \\ 
&a_{ij}=a_{ji} \in C^{0,1}(\bar{\Omega}), \\ 
&\exists \, \alpha>0 \text{ s.t.\ } \sum_{i,j=1}^n a_{ij}(x) \xi_i \xi_j \geq \alpha \abs{\xi}^2, \quad \forall x \in \Omega, \, \xi \in \R^n.
\end{align*}
In particular, $A=-\Laplace$ satisfies these assumptions. We want to work with a weak formulation of (\ref{eqn:ctos}). Define the conjugate $q'$ of $q$ to be the real number such that $\frac{1}{q}+\frac{1}{q'}=1$, and define the bilinear form $a:W_0^{1,q}(\Omega) \times W_0^{1,q'}(\Omega) \to \R$ associated to $A$ by
\begin{align*}
a(z,v) &= \sum_{i,j=1}^n \int_\Omega a_{ij}(x) \pd_{x_i}z(x) \pd_{x_j} v(x) \rd x + \int_\Omega a_0(x) z(x) v(x) \rd x, 
\end{align*}
where the derivatives are taken in the weak sense. By a standard result, for $\eta \in L^2(\Omega)$ there is a unique $y \in H_0^1(\Omega)$ satisfying
\begin{equation}
\label{eqn:state}
a(y,v) = (\eta,v) \quad \forall v \in H_0^1(\Omega).
\end{equation}
Here and throughout this paper $(\cdot, \cdot)$ denotes the $L^2(\Omega)$ inner product. With our assumptions on the domain $\Omega$ we have that $y \in H^2(\Omega) \cap H_0^1(\Omega)$ and 
\begin{equation*}
\norm{y}_{H^2(\Omega)} \leq C \norm{\eta}_{L^2(\Omega)}.
\end{equation*}
Here and throughout this paper $C$ is a positive constant that may vary from line to line and is independent of the variables it precedes (e.g.\ in the above equation $C$ is independent of $\eta$). For a proof of this regularity and stability result see Theorems~2.2.2.3 and 3.2.1.2 in \cite{Grisvard1985}. Since $H^2(\Omega) \hookrightarrow C(\bar{\Omega})$ we in fact have that $y \in C_0(\Omega)$ and
\begin{equation}
\label{eqn:cts}
\norm{y}_{\infty} \leq C \norm{\eta}_{L^2(\Omega)}.
\end{equation}

We define the control-to-state operator $S:L^2(\Omega) \to C_0(\Omega)$ to map $\eta \in L^2(\Omega)$ to the solution $y \in C_0(\Omega)$ of (\ref{eqn:state}). $S$ is linear, and also continuous by (\ref{eqn:cts}), so $S$ has an adjoint operator. Using (\ref{eqn:zzxz}) we can define the adjoint $S^*:\mathcal{M}(\Omega) \to L^2(\Omega)$ of $S$ by
\[
(S^*\mu, \eta) = \langle \mu, S\eta \rangle_{\mathcal{M}(\Omega)}  \quad \forall \mu \in \mathcal{M}(\Omega), \eta \in L^2(\Omega).
\]
Note that the control-to-state operator $S$ has the following characterisation.
\begin{lemma}
\label{lem:statealt}
For $\eta \in L^2(\Omega)$, $y=S\eta$ if and only if $y \in C_0(\Omega)$ satisfies
\begin{equation}
\label{eqn:stateother}
\forall q \in \Big (n, \frac{2n}{n-2} \Big): \quad  y \in W_0^{1,q}(\Omega), \quad a(y,v) = (\eta,v) \quad \forall v \in W_0^{1,q'}(\Omega).
\end{equation}
Here $(\eta,v)$ makes sense since $q \in (n, \frac{2n}{n-2})$ if and only if $q' \in (\frac{2n}{n+2},\frac{n}{n-1})$, and Remark~\ref{rem:emb} gives that for such $q'$ we have $v \in W_0^{1,q'}(\Omega) \subset L^2(\Omega)$.
\end{lemma}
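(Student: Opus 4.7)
The plan is to prove the two implications separately, in each case transferring the weak formulation between the Hilbert setting in (\ref{eqn:state}) and the Banach setting in (\ref{eqn:stateother}) by combining density of $C_c^\infty(\Omega)$ with continuity of both sides of the equation in the relevant Sobolev norm. Throughout, I note that the stated range $q \in (n, 2n/(n-2))$ is nonempty for $n=2,3$ (it is $(2,\infty)$ and $(3,6)$ respectively), and that its conjugate range is $q' \in (2n/(n+2), n/(n-1))$, in particular $q' \leq 2 \leq q$, so that on the bounded domain $\Omega$ one has the continuous embeddings $W_0^{1,q}(\Omega) \hookrightarrow H_0^1(\Omega) \hookrightarrow W_0^{1,q'}(\Omega)$.

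For the forward implication, assume $y = S\eta$. Then $y \in H^2(\Omega) \cap H_0^1(\Omega) \subset C_0(\Omega)$ with $\norm{y}_{H^2(\Omega)} \leq C\norm{\eta}_{L^2(\Omega)}$. By the third bullet of Remark~\ref{rem:emb}, for every $q \in (n, 2n/(n-2))$ we have $y \in W^{1,q}(\Omega)$; combined with the vanishing trace of $y$ (inherited from $y \in H_0^1$ and agreement of the trace operators where both are defined) this gives $y \in W_0^{1,q}(\Omega)$. It remains to extend the identity $a(y,v) = (\eta,v)$ from $v \in H_0^1(\Omega)$ to $v \in W_0^{1,q'}(\Omega)$. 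For this, take $v \in W_0^{1,q'}(\Omega)$ and a sequence $v_k \in C_c^\infty(\Omega)$ converging to $v$ in $W^{1,q'}(\Omega)$. The left-hand side $a(y,v_k)$ converges to $a(y,v)$ by Hölder with exponents $q$ and $q'$ applied to the gradient term, together with $a_0 \in L^\infty(\Omega)$ and $W_0^{1,q'}(\Omega) \hookrightarrow L^2(\Omega)$ (the second bullet of Remark~\ref{rem:emb}, using $q' > 2n/(n+2)$) for the zero-order term. The right-hand side $(\eta,v_k)$ converges to $(\eta,v)$ by the same $L^2$ embedding and Cauchy--Schwarz. Since $v_k \in H_0^1(\Omega)$, the identity $a(y,v_k) = (\eta,v_k)$ holds by (\ref{eqn:state}), and passing to the limit yields the claim.

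For the reverse implication, suppose $y \in C_0(\Omega)$ satisfies (\ref{eqn:stateother}) for some (equivalently, every) $q \in (n, 2n/(n-2))$. Fix such a $q$; since $q \geq 2$ and $\Omega$ is bounded, $W_0^{1,q}(\Omega) \hookrightarrow H_0^1(\Omega)$, so $y \in H_0^1(\Omega)$. Because $q' \leq 2$ gives $H_0^1(\Omega) \subset W_0^{1,q'}(\Omega)$, the variational identity in (\ref{eqn:stateother}) may be tested with arbitrary $v \in H_0^1(\Omega)$, yielding $a(y,v) = (\eta,v)$ for all such $v$. By uniqueness of solutions to (\ref{eqn:state}), this forces $y = S\eta$.

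The main obstacle is the forward direction: the density--continuity extension must treat both terms of $a(\cdot,\cdot)$ and the right-hand side simultaneously, with the gradient term controlled via the Hölder pairing $(q,q')$ and the remaining two terms controlled via the Sobolev embedding $W_0^{1,q'}(\Omega) \hookrightarrow L^2(\Omega)$, which is precisely why the constraint $q' > 2n/(n+2)$ (equivalently $q < 2n/(n-2)$) appears in the statement. A secondary technical point is verifying that $y \in W_0^{1,q}(\Omega)$ (not just $W^{1,q}(\Omega)$) in the forward direction, which follows from the consistency of the $H^1$- and $W^{1,q}$-trace operators on the regular domains under consideration.
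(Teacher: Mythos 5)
Your proof is correct and follows essentially the same route as the paper's: both implications rest on the density of $C_c^\infty(\Omega)$ in $W_0^{1,q'}(\Omega)$ together with continuity of $v \mapsto a(y,v)$ and $v \mapsto (\eta,v)$ in the $W^{1,q'}$ norm (using $y \in H^2(\Omega) \hookrightarrow W^{1,q}(\Omega)$) for the forward direction, and the inclusion $H_0^1(\Omega) \subset W_0^{1,q'}(\Omega)$ plus uniqueness for the reverse. The only difference is cosmetic: the paper first integrates by parts to obtain $Ay=\eta$ a.e.\ and then extends $a(y,v)=(Ay,v)$ by density, whereas you extend the weak identity $a(y,v)=(\eta,v)$ directly, making the H\"older and embedding estimates behind that density step explicit.
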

\begin{proof}
Suppose $y=S\eta$ (i.e.\ it solves (\ref{eqn:state})) and take $q \in (n, \frac{2n}{n-2} )$. Since $y \in H^2(\Omega)$ we can integrate $a(y,v)$ by parts against $v \in C_c^\infty(\Omega)$ to get
\begin{equation}
\label{eqn:cper}
a(y,v) = (Ay, v) \quad \forall v \in C_c^\infty(\Omega).
\end{equation}
Then using (\ref{eqn:state}) we get
\begin{equation}
\label{eqn:xxxx}
(\eta, v) = (Ay, v) \quad \forall v \in C_c^\infty(\Omega),
\end{equation}
which implies that $Ay=\eta$ a.e.\ in $\Omega$. Moreover, it follows from (\ref{eqn:cper}) and the density of $C_c^\infty(\Omega)$ in $W_0^{1,q'}(\Omega)$ that $a(y,v)=(Ay,v)$ for all $v \in W_0^{1,q'}(\Omega)$. Combining this fact, $Ay=\eta$ a.e.\ in $\Omega$ and $v \in W_0^{1,q'}(\Omega) \subset L^2(\Omega)$ gives $a(y,v)=(\eta,v)$ for all $v \in W_0^{1,q'}(\Omega)$. 
By Remark~\ref{rem:emb} note that $y \in H^2(\Omega)\cap H_0^1(\Omega) \subset W_0^{1,q}(\Omega)$. The above arguments hold for any  $q \in (n, \frac{2n}{n-2} )$, so we have proved that $y=S\eta$ implies (\ref{eqn:stateother}) holds.

The reverse implication is also true. Since $H_0^1(\Omega) \subset W_0^{1,q'}(\Omega)$ for any $q \in (n, \frac{2n}{n-2} )$, we can test (\ref{eqn:stateother}) with any $v \in H_0^1(\Omega)$. So a solution of this must solve (\ref{eqn:state}). This completes the proof.
\end{proof}

We can use this result to prove that the adjoint operator $S^*$ can be characterised in the following way.
\begin{lemma}
\label{lem:adj}
For $\mu \in \mathcal{M}(\Omega)$, $p=S^* \mu$ if and only if $p \in L^2(\Omega)$ satisfies
\begin{equation}
\label{eqn:adj2}
\forall q' \in \Big (\frac{2n}{n+2}, \frac{n}{n-1} \Big ): \quad p \in W_0^{1,q'}(\Omega), \quad a(v,p) = \int_\Omega v \, \rd \mu \quad \forall v \in W_0^{1,q}(\Omega).
\end{equation}
Moreover,
\begin{equation}
\label{eqn:stabbb}
\norm{p}_{W_0^{1,q'}(\Omega)} \leq C(q') \norm{\mu}_{\mathcal{M}(\Omega)} \quad \forall q' \in \Big (\frac{2n}{n+2}, \frac{n}{n-1} \Big ).
\end{equation}
\end{lemma}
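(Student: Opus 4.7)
The plan is to construct a candidate $\tilde p \in W_0^{1,q'}(\Omega)$ solving the very weak adjoint problem (\ref{eqn:adj2}) directly, and then to identify it with $S^*\mu$ by a duality test built on Lemma~\ref{lem:statealt}. Uniqueness and the reverse implication will then drop out of the same identification.

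For the construction, fix $q' \in (\frac{2n}{n+2}, \frac{n}{n-1})$ and let $q$ be its H\"older conjugate, so $q \in (n, \frac{2n}{n-2})$. Since $q>n$, Remark~\ref{rem:emb} yields $W_0^{1,q}(\Omega) \hookrightarrow C(\bar\Omega)$, so the functional $v \mapsto \int_\Omega v \rd\mu$ is continuous on $W_0^{1,q}(\Omega)$ with norm at most $C(q)\norm{\mu}_{\mathcal{M}(\Omega)}$. The classical Stampacchia duality argument for elliptic equations with measure data, whose hypotheses reduce, under the Lipschitz and convex-polyhedral/$C^{1,1}$ assumptions on the $a_{ij}$ and $\Omega$, to the primal $W^{1,q}$-solvability granted by Lemma~\ref{lem:statealt}, then furnishes a unique $\tilde p \in W_0^{1,q'}(\Omega)$ satisfying $a(v,\tilde p) = \int_\Omega v \rd\mu$ for every $v \in W_0^{1,q}(\Omega)$, together with the bound $\norm{\tilde p}_{W_0^{1,q'}(\Omega)} \leq C(q')\norm{\mu}_{\mathcal{M}(\Omega)}$, which is exactly (\ref{eqn:stabbb}). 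Since $q' > \frac{2n}{n+2}$, Remark~\ref{rem:emb} also places $\tilde p$ in $L^2(\Omega)$.

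To identify $\tilde p$ with $S^*\mu$, take an arbitrary $\eta \in L^2(\Omega)$ and set $y := S\eta$; Lemma~\ref{lem:statealt} gives $y \in W_0^{1,q}(\Omega)$ and $a(y,v) = (\eta,v)$ for every $v \in W_0^{1,q'}(\Omega)$. Testing this identity with $v = \tilde p$ and, symmetrically, testing $a(v,\tilde p) = \int_\Omega v \rd\mu$ with $v = y$ yields
\[
(\eta, \tilde p) = a(y, \tilde p) = \int_\Omega y \rd\mu = \langle \mu, S\eta \rangle_{\mathcal{M}(\Omega)} = (S^*\mu, \eta).
\]
Since $\eta$ was arbitrary, $\tilde p = S^*\mu$ in $L^2(\Omega)$, so $S^*\mu$ admits the $W_0^{1,q'}$-representative required by (\ref{eqn:adj2}) and inherits the bound (\ref{eqn:stabbb}). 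Applying the same chain to any $p \in L^2(\Omega)$ satisfying (\ref{eqn:adj2}) forces $(\eta, p) = (S^*\mu, \eta)$ for all $\eta \in L^2(\Omega)$, so $p = S^*\mu$; this delivers uniqueness and the reverse implication in one stroke.

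The hard part will be the first step: producing a $W^{1,q'}$-regular solution of the measure-data equation with its stability bound goes beyond the standard $H_0^1$ theory, and truly rests on transposition against the primal $W^{1,q}$-regularity established in Lemma~\ref{lem:statealt}. Once $\tilde p$ is in hand, the identification with $S^*\mu$ is a routine exchange of test functions, made rigorous precisely by the H\"older duality of $W_0^{1,q}(\Omega)$ and $W_0^{1,q'}(\Omega)$.
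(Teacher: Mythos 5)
Your proposal is correct and follows essentially the same route as the paper: the central step in both is the duality exchange obtained by testing the state characterisation of Lemma~\ref{lem:statealt} with $p$ and the adjoint equation with $S\eta$, giving $(\eta,p)=\langle\mu,S\eta\rangle_{\mathcal{M}(\Omega)}$, while the existence and $W_0^{1,q'}$-stability of the measure-data solution is deferred to the classical transposition result (the paper cites Theorem~2 of Casas 1985, which is exactly the Stampacchia duality argument you invoke). Your version merely reorders things by constructing $\tilde p$ first and deducing both directions from the identification, which if anything makes the paper's terse appeal to ``uniqueness of the adjoint'' for the reverse implication more explicit.
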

\begin{proof}
Suppose (\ref{eqn:adj2}) is true. Fix some $q' \in (\frac{2n}{n+2}, \frac{n}{n-1} )$ then for all $\mu \in \mathcal{M}(\Omega)$ and $\eta \in L^2(\Omega)$, testing (\ref{eqn:adj2}) with $S \eta \in W_0^{1,q}(\Omega)$ gives
\[
a(S\eta,p) =  \int_\Omega S \eta \, \rd \mu = \langle \mu , S \eta \rangle_{\mathcal{M}(\Omega)}.
\]
By the definition of $S$ we can test (\ref{eqn:stateother}) with $p \in W_0^{1,q'}(\Omega)$ to get 
\[
a(S \eta,p) = (\eta,p) = (p, \eta). 
\]
Combining these two equalities and recalling that $\mu$ and $\eta$ are arbitrary we get
\[
\langle \mu, S\eta \rangle_{\mathcal{M}(\Omega)} = (\eta,p) \quad \forall \mu \in \mathcal{M}(\Omega), \eta \in L^2(\Omega). 
\]
Comparing this to the definition of the adjoint we see $p=S^*\mu$. Since $q'$ was arbitrary we have shown (\ref{eqn:adj2}) implies $p=S^*\mu$. The uniqueness of the adjoint operator proves the reverse implication.

For the proof of the stability estimate (\ref{eqn:stabbb}) see Theorem~2 in \cite{Casas1985}. 
\end{proof}


\begin{remark}
\label{rem:neu}
We have assumed that the state equation is an elliptic PDE with Dirichlet boundary conditions. The theory in this paper can be adapted to elliptic PDEs with suitable Neumann boundary conditions, provided that $a(\cdot,\cdot)$ is still coercive. This is because the same regularity results hold for them and the same error estimates hold for their finite element approximations.
\end{remark}

\section{Problem formulation}
\label{sec:probformpoint}

We are now in a position to formulate the optimal control problem precisely:
\begin{equation}
\label{eqn:controlprob}
\begin{aligned}
\min \quad &J(y,\eta) := \frac{1}{2}\sum_{\omega \in I} (y(\omega)-g_\omega)^2 + \frac{\nu}{2} \norm{\eta}^2_{L^2(\Omega)} \\
\text{over } \quad &C_0(\Omega) \times L^2(\Omega) \\
\text{s.t. } \quad &y=S\eta \text{ (i.e.\ (\ref{eqn:state}) holds}) \\
\text{and } \quad &\eta \in U_{ad} :=\{\eta \in L^2(\Omega) : a \leq \eta \leq b \text{ a.e.\ in } \Omega \}.
\end{aligned}
\end{equation}
Recall that $I \subset \Omega$ is a finite set of points, $\{g_\omega\}_{\omega \in I}$ are prescribed values at these points, and $\nu>0$. We will prove results for the case that $a$ and $b$ are constant real numbers with $a<b$, and also the case of no control constraints (i.e. $b=-a=\infty$).

We can use the control-to-state operator $S$ to define the reduced objective functional $\hat{J}(\eta)=J(S\eta,\eta)$. Then it is straightforward to show that (\ref{eqn:controlprob}) is equivalent to the optimisation problem:
\begin{equation}
\label{eqn:optprobpoint}
\begin{aligned}
\min \quad &\hat{J}(\eta) = \frac{1}{2} \sum_{\omega \in I} (S\eta(\omega)-g_\omega)^2 + \frac{\nu}{2} \norm{\eta}^2_{L^2(\Omega)} \\
\text{over } \quad &\eta \in U_{ad}.
\end{aligned}
\end{equation}
This equivalence is in the sense that $u \in U_{ad}$ solves (\ref{eqn:optprobpoint}) if and only if $(Su,u)$ solves (\ref{eqn:controlprob}). It is simpler to work with the optimisation problem (\ref{eqn:optprobpoint}) for proving existence and uniqueness of a solution and deriving an optimality condition.
\begin{theorem}
\label{thm:exist}
Problem (\ref{eqn:optprobpoint}) has a unique solution $u \in U_{ad}$, hence  (\ref{eqn:controlprob}) has a unique solution $(Su,u)$.
\end{theorem}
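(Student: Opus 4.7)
The plan is to apply the direct method of the calculus of variations to the reduced problem (\ref{eqn:optprobpoint}), exploiting the fact that $\hat{J}$ inherits strict convexity, coercivity and weak lower semicontinuity from the structure of $S$. First I would check that $\hat{J}$ is well-defined on $U_{ad}$: since $S:L^2(\Omega)\to C_0(\Omega)$ is bounded by (\ref{eqn:cts}), the point values $S\eta(\omega)$ make sense for every $\omega\in I$ and every $\eta\in L^2(\Omega)$, and the Tikhonov term is of course finite. Strict convexity is immediate, because for each $\omega\in I$ the map $\eta\mapsto S\eta(\omega)-g_\omega$ is affine, so the fidelity sum is convex, and $\frac{\nu}{2}\norm{\eta}_{L^2(\Omega)}^2$ is strictly convex since $\nu>0$. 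Coercivity on $L^2(\Omega)$ follows from $\hat{J}(\eta)\geq \frac{\nu}{2}\norm{\eta}_{L^2(\Omega)}^2$.

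Next I would establish weak lower semicontinuity on $L^2(\Omega)$. For each $\omega\in I$ the map $\eta\mapsto S\eta(\omega)$ is the composition of the bounded linear operator $S:L^2(\Omega)\to C_0(\Omega)$ with the continuous linear evaluation functional $v\mapsto v(\omega)$ on $C_0(\Omega)$, hence a bounded linear functional on $L^2(\Omega)$, and therefore weakly continuous. Squaring and translating keeps the real-valued functional $(S\eta(\omega)-g_\omega)^2$ weakly continuous, and the squared norm term is convex and strongly continuous, hence weakly lower semicontinuous; a finite sum of weakly lsc functionals is weakly lsc.

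With these ingredients the existence proof is standard. Take a minimising sequence $\{\eta_k\}\subset U_{ad}$; coercivity bounds it in $L^2(\Omega)$, so by reflexivity a subsequence satisfies $\eta_{k_j}\rightharpoonup u$. In both cases considered (the box constrained case with $a<b$ and the unconstrained case $b=-a=\infty$) the set $U_{ad}$ is convex and strongly closed in $L^2(\Omega)$, hence weakly closed, so $u\in U_{ad}$. Weak lower semicontinuity then gives $\hat{J}(u)\leq \liminf_j \hat{J}(\eta_{k_j})=\inf_{U_{ad}}\hat{J}$, so $u$ is a minimiser, and strict convexity of $\hat{J}$ together with convexity of $U_{ad}$ forces uniqueness. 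Setting $y=Su$ and appealing to the equivalence between (\ref{eqn:controlprob}) and (\ref{eqn:optprobpoint}) stated in the text yields the unique minimiser $(Su,u)$ of (\ref{eqn:controlprob}).

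There is no real obstacle here; the only point that requires any care is the weak continuity of $\eta\mapsto S\eta(\omega)$, which hinges precisely on the $C_0(\Omega)$-valued mapping property of $S$ established via (\ref{eqn:cts}) and is the feature that distinguishes this problem from a standard $L^2$-fidelity problem.
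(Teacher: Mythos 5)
Your argument is correct and is essentially the same as the paper's, which simply defers to the standard direct-method/convexity argument by citing Theorem~2.14 of \cite{Troltzsch2005} rather than writing it out. The one point you rightly isolate --- weak continuity of $\eta\mapsto S\eta(\omega)$, which rests on $S$ being bounded from $L^2(\Omega)$ into $C_0(\Omega)$ via (\ref{eqn:cts}) --- is exactly the feature that makes the standard argument go through for the point-evaluation fidelity term.
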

\begin{proof}
This result follows using the same argument as is used for proving existence and uniqueness of solutions to standard optimal control problems. See e.g.\ Theorem~2.14 in \cite{Troltzsch2005} for the details.
\end{proof}

\begin{theorem}
\label{thm:optpoint}
$u \in U_{ad}$ is a solution of (\ref{eqn:optprobpoint}) if and only if there exists a $p \in L^2(\Omega)$ such that for all $q' \in (\frac{2n}{n+2}, \frac{n}{n-1} )$, $p \in W_0^{1,q'}(\Omega)$ and
\begin{subequations} \label{eqn:system}
\begin{align}
&u \in U_{ad}, \quad (p + \nu u, v - u) \geq 0  &&\forall v \in U_{ad}, \label{eqn:systema} \\
&a(v,p)= \sum_{\omega \in I} (Su(\omega)-g_\omega)v(\omega) &&\forall v \in W_0^{1,q}(\Omega) \label{eqn:systemb} .
\end{align}
\end{subequations}
\end{theorem}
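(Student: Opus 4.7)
The plan is to treat \eqref{eqn:optprobpoint} as a convex minimisation problem over the convex admissible set $U_{ad}$, compute the Fréchet derivative of the reduced functional $\hat{J}$, and then identify the quantity $\hat{J}'(u)$ with $p+\nu u$ for a suitable adjoint state $p$ using Lemma~\ref{lem:adj}.

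First I would note that $\hat{J}:L^2(\Omega)\to \R$ is convex, continuous and Fréchet differentiable, with derivative in direction $h\in L^2(\Omega)$ given by
\begin{equation*}
\hat{J}'(\eta)h = \sum_{\omega\in I}(S\eta(\omega)-g_\omega)(Sh)(\omega)+\nu(\eta,h),
\end{equation*}
which makes sense because $S$ maps $L^2(\Omega)$ into $C_0(\Omega)$ by \eqref{eqn:cts}. By standard convex optimisation on a closed convex set, $u\in U_{ad}$ solves \eqref{eqn:optprobpoint} if and only if $\hat{J}'(u)(v-u)\ge 0$ for all $v\in U_{ad}$.

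Next I would introduce the measure
\begin{equation*}
\mu := \sum_{\omega\in I}(Su(\omega)-g_\omega)\delta_\omega \in \mathcal{M}(\Omega),
\end{equation*}
so that for every $h\in L^2(\Omega)$,
\begin{equation*}
\sum_{\omega\in I}(Su(\omega)-g_\omega)(Sh)(\omega) = \langle\mu,Sh\rangle_{\mathcal{M}(\Omega)} = (S^*\mu,h).
\end{equation*}
Setting $p:=S^*\mu\in L^2(\Omega)$, Lemma~\ref{lem:adj} immediately gives that $p\in W_0^{1,q'}(\Omega)$ for every admissible $q'$ and, since $v\in W_0^{1,q}(\Omega)\hookrightarrow C(\bar\Omega)$ for $q>n$, that
\begin{equation*}
a(v,p) = \int_\Omega v\,\rd\mu = \sum_{\omega\in I}(Su(\omega)-g_\omega)v(\omega) \quad\forall v\in W_0^{1,q}(\Omega),
\end{equation*}
which is exactly \eqref{eqn:systemb}. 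Substituting this identification into the variational inequality $\hat{J}'(u)(v-u)\ge 0$ yields $(p+\nu u,v-u)\ge 0$ for all $v\in U_{ad}$, which is \eqref{eqn:systema}. For the converse direction, given a $p$ satisfying \eqref{eqn:systema}--\eqref{eqn:systemb}, testing the adjoint equation with $S(v-u)\in W_0^{1,q}(\Omega)$ shows that $(p,v-u)$ agrees with the fidelity part of $\hat{J}'(u)(v-u)$, so $\hat{J}'(u)(v-u)\ge 0$ on $U_{ad}$ and convexity finishes the proof.

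The main obstacle is verifying that the adjoint representation is actually available: the right-hand side $\sum_\omega (Su(\omega)-g_\omega)\delta_\omega$ is only a measure, not an $L^2$ function, so classical elliptic regularity does not apply and one must rely on Lemma~\ref{lem:adj} (and hence on the transposition argument behind it) to obtain a solution $p$ in the $W_0^{1,q'}$-scale with $q'<n/(n-1)$. Once this is in place the rest of the argument is a routine application of convex duality; in particular one should be careful that the test functions $v$ in \eqref{eqn:systemb} lie in a space that continuously embeds into $C(\bar\Omega)$ so that $v(\omega)$ is well defined, which is ensured by the restriction $q>n$ in Lemma~\ref{lem:adj}.
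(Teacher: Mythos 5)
Your proof is correct and follows essentially the same route as the paper: reduce to the variational inequality $\hat{J}'(u)(v-u)\ge 0$ by convexity, represent the fidelity part of the derivative as $(S^*\mu,v-u)$ for a measure in $\mathcal{M}(\Omega)$, and invoke Lemma~\ref{lem:adj} to obtain the adjoint state $p$ in the $W_0^{1,q'}$-scale. The only cosmetic difference is that you write the measure directly as $\sum_{\omega\in I}(Su(\omega)-g_\omega)\delta_\omega$, whereas the paper defines it as the measure with density $Su-g_d$ with respect to $\sum_{\omega\in I}\delta_\omega$ so that the argument extends verbatim to an arbitrary $\mu\in\mathcal{M}(\Omega)$; for the finite point set at hand the two are identical.
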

\begin{proof}
$\hat{J}:L^2(\Omega) \to \R$ has a G\^ateaux derivative $J':L^2(\Omega) \to L^2(\Omega)^*$. It is also (strictly) convex, and $U_{ad}$ is a nonempty and convex subset of a real Banach space. So by a standard result (see e.g.\ Lemma~2.21 in \cite{Troltzsch2005}) $u \in L^2(\Omega)$ is a solution of (\ref{eqn:optprobpoint}) iff
\begin{equation}
\label{eqn:vipoint}
u \in U_{ad}, \quad \langle \hat{J}'(u) ,v-u \rangle_{L^2(\Omega)^*, L^2(\Omega)} \geq 0 \quad \forall v \in U_{ad}.
\end{equation}

For notational convenience define a function $g_d \in C^\infty(\bar{\Omega})$ such that $g_d(\omega) = g_\omega$ for all $\omega \in I$; such a function could be constructed using a mollifier. Let $\mu := \sum_{\omega \in I} \delta_{\omega}$, where $\delta_{\omega}$ are Dirac measures centred at points $\omega$, so $\mu \in \mathcal{M}(\Omega)$. Since $(Su-g_d)^2 \in C(\bar{\Omega})$ we can rewrite $\hat{J}$ as
\[
\hat{J}(u) = \frac{1}{2}\int_\Omega (Su-g_d)^2 \rd \mu + \frac{\nu}{2}\norm{u}^2_{L^2(\Omega)}
\]
and use the ideas from \cite{Casas1986}. As a result our proof applies to objective functionals of this form with arbitrary $\mu \in \mathcal{M}(\Omega)$.

Calculating $\hat{J}'$ we find that (\ref{eqn:vipoint}) becomes
\[
\int_\Omega (Su-g_d)S(v-u) \rd \mu + \nu (u,v-u) \geq 0 \quad \forall v \in U_{ad}.
\]
We now show that the first term on the left hand side can be written in the form $\int_\Omega p(v-u) \rd x$, where $p$ satisfies (\ref{eqn:systemb}). 

For $u \in L^2(\Omega)$, $Su-g_d \in C(\bar{\Omega})$ and so it is measurable with respect to $\mu$. So we can define a real Borel measure $\lambda_u: \mathcal{B} \to \R$ (where $\mathcal{B}$ denotes the Borel $\sigma$-algebra of $\Omega$) by
\begin{equation} \label{eqn:lamu}
\lambda_u(A) := \int_A (Su-g_d) \rd \mu \quad \forall A \in \mathcal{B}.
\end{equation}
Since $\mu$ is regular, we can check that $\lambda_u$ is also regular.
So $\lambda_u$ is a real regular Borel measure (i.e.\ it belongs to $\mathcal{M}(\Omega)$) and Theorem~1.29 in \cite{Rudin} says that for $z \in C_0(\Omega)$,
\begin{equation}
\label{eqn:measrel}
\int_\Omega(Su-g_d) z \rd \mu = \int_\Omega z \rd \lambda_u.
\end{equation}
In particular, we can take $z:=S(v-u)$ to get
\[
\int_\Omega(Su-g_d) S(v-u) \rd \mu = \int_\Omega S(v-u) \rd \lambda_u = (S^*\lambda_u, v-u).
\]

Let $p:=S^*\lambda_u \in L^2(\Omega)$ then by (\ref{eqn:adj2}), for all $q' \in (\frac{2n}{n+2}, \frac{n}{n-1})$, $p \in W_0^{1,q'}(\Omega)$ and
\[
a(v,p) = \int_\Omega v \rd \lambda_u \quad \forall v \in W_0^{1,q}(\Omega).
\]
To finish, note that
\[
\int_\Omega v \rd \lambda_u = \int_\Omega(Su-g_d) v \rd \mu = \sum_{\omega \in I} (Su(\omega)-g_\omega)v(\omega).
\]
\end{proof}


\begin{corollary}
\label{cor:reg}
If $u \in U_{ad}$ is a solution of (\ref{eqn:optprobpoint}) then it has the additional regularity that $u \in W^{1,q'}(\Omega)$ for all $q' \in (\frac{2n}{n+2}, \frac{n}{n-1} )$. 
\end{corollary}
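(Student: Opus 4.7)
The plan is to extract a pointwise formula for $u$ in terms of the adjoint $p$ from the variational inequality in Theorem~\ref{thm:optpoint}, and then transfer the Sobolev regularity of $p$ to $u$ via a Stampacchia-type truncation argument.

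First I would rewrite the variational inequality (\ref{eqn:systema}) in its familiar pointwise form. The bilateral box constraint $a \leq \eta \leq b$ combined with $(p+\nu u, v-u) \geq 0$ for all $v \in U_{ad}$ is equivalent, by a standard convex analysis argument (e.g.\ taking $v$ equal to $u$ off an arbitrary measurable set and varying it on that set), to the projection formula
\[
u(x) = \mathbb{P}_{[a,b]}\!\left(-\frac{1}{\nu} p(x)\right) = \max\!\left(a, \min\!\left(b, -\frac{p(x)}{\nu}\right)\right) \quad \text{for a.e.\ } x \in \Omega.
\]
In the unconstrained case $b = -a = \infty$ this simply becomes $u = -p/\nu$ a.e.

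Next I would invoke Theorem~\ref{thm:optpoint}, which guarantees $p \in W_0^{1,q'}(\Omega)$ for every $q' \in \bigl(\frac{2n}{n+2}, \frac{n}{n-1}\bigr)$. In the unconstrained case the conclusion $u \in W^{1,q'}(\Omega)$ is then immediate from $u = -p/\nu$. In the constrained case, I would apply the classical chain rule for Sobolev functions composed with a Lipschitz function: if $F: \R \to \R$ is Lipschitz and $w \in W^{1,q'}(\Omega)$, then $F \circ w \in W^{1,q'}(\Omega)$ with $\nabla (F \circ w) = F'(w) \nabla w$ a.e.\ (this is Stampacchia's truncation theorem, see e.g.\ Gilbarg--Trudinger or \cite{Adams2003}). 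Applying this with $F(t) = \max(a, \min(b, -t/\nu))$, which is $1/\nu$-Lipschitz, to $w = p \in W^{1,q'}(\Omega)$ yields $u = F \circ p \in W^{1,q'}(\Omega)$ and a bound $\|\nabla u\|_{L^{q'}} \leq \nu^{-1} \|\nabla p\|_{L^{q'}}$.

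The main obstacle is really just the careful justification of the pointwise projection formula starting from the $L^2$ variational inequality, because $u$ and $p$ are only defined almost everywhere; this is routine but is where one has to argue carefully on level sets of $-p/\nu$ relative to $[a,b]$. Once the formula is in hand, the Stampacchia truncation step is black-box. Since $q' \in \bigl(\frac{2n}{n+2}, \frac{n}{n-1}\bigr)$ was arbitrary, the assertion of the corollary follows.
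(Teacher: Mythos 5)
Your proposal is correct and follows essentially the same route as the paper: both derive the pointwise projection formula $u = \mathbb{P}_{[a,b]}(-p/\nu)$ from the variational inequality and then transfer the $W^{1,q'}$ regularity of $p$ to $u$ via the stability of Sobolev regularity under truncation (the paper invokes the fact that the maximum of two $W^{1,q'}$ functions is again $W^{1,q'}$, which is the same standard fact as your Lipschitz-composition/Stampacchia argument).
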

\begin{proof}
Observe that (\ref{eqn:systema}) is equivalent to
\begin{equation}
\label{eqn:proj}
u(x)=\mathbb{P}_{[a,b]} \left(-\frac{1}{\nu} p(x) \right) \quad \text{ for a.e. } x \in \Omega, 
\end{equation}
where $\mathbb{P}_{[a,b]}(v) := v + \max (0, a - v ) - \max (0, v - b )$. If $v,w \in W^{1,q'}(\Omega)$ then $\max (v,w) \in W^{1,q'}(\Omega)$ (see e.g.\ \cite{Morrey1966}). So since $p \in W^{1,q'}(\Omega)$ for all $q' \in (\frac{2n}{n+2}, \frac{n}{n-1} )$, we also get this additional regularity for $u$. 
\end{proof}

\subsection{Link to pointwise state constraints}
\label{sec:link}
 
We now discuss a link between the problem we consider in this paper, which penalises deviation of the state from certain values at points, and an optimal control problem with a finite number of point state constraints i.e.\ a problem that forces the state to take certain values at points. 

Consider the following problem, which is a generalisation of (\ref{eqn:controlprob}) in the case of no control constraints ($b=-a=\infty$):
\begin{equation}
\label{eqn:minweakstatecon}
\begin{aligned}
\min \quad & J^\theta_\nu(y,\eta) := \frac{1}{2} \sum_{\omega \in I} (y(\omega)-g_\omega)^2 + \nu \left( \frac{1}{2} \theta \norm{y-g_d}^2_{L^2(\Omega)} + \frac{1}{2}\norm{\eta}^2_{L^2(\Omega)} \right) \\
\text{over } \quad & C_0(\Omega) \times L^2(\Omega) \\
\text{s.t. } \quad & (\ref{eqn:state}) \text{ holds.}
\end{aligned}
\end{equation}
The modification is the addition of an optional $L^2(\Omega)$ fidelity term containing $g_d \in L^2(\Omega)$, which is weighted by $\theta \geq 0$. This allows us to relate (\ref{eqn:minweakstatecon}) to a problem with point state constraints that is considered in the literature: In the limit $\nu \to 0$ we get convergence of solutions of (\ref{eqn:minweakstatecon}) to the solution of the following problem, which can be found, for example, in \cite{Leykekhman2013}:
\begin{equation}
\label{eqn:minstatecon}
\begin{aligned}
\min \quad & J^\theta(y,\eta) := \frac{1}{2} \theta \norm{y-g_d}_{L^2(\Omega)}^2 + \frac{1}{2} \norm{\eta}^2_{L^2(\Omega)} \\
\text{over } \quad & H_0^1(\Omega) \times L^2(\Omega) \\
\text{s.t. } \quad & (\ref{eqn:state}) \text{ holds and } y(\omega)=g_\omega \text{ for } \omega \in I.
\end{aligned}
\end{equation}

\begin{theorem} \label{thm:link}
Let $(S u_\nu,u_\nu)$ solve (\ref{eqn:minweakstatecon}) for $\nu>0$ and $(S\bar{u}
,\bar{u})$ solve  (\ref{eqn:minstatecon}). Then as $\nu \to 0$,
\begin{align*}
Su_\nu &\rightharpoonup S\bar{u} \quad \text{ in } H_0^1(\Omega) \\
u_\nu &\rightharpoonup \bar{u} \quad \quad \text{in } L^2(\Omega).
\end{align*}
\end{theorem}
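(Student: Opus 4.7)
The natural strategy is a classical quadratic-penalty convergence argument: use $\bar{u}$ itself as a competitor in the penalized problem to extract a priori bounds on $u_\nu$, pass to a weakly convergent subsequence, verify that its limit is feasible for (\ref{eqn:minstatecon}), and identify the limit with $\bar{u}$ via weak lower semicontinuity and uniqueness of the constrained minimizer.

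First, because $\bar{u}$ is admissible for (\ref{eqn:minstatecon}) we have $(S\bar{u})(\omega)=g_\omega$ for each $\omega\in I$, so $\bar{u}$ is also feasible for the unconstrained problem (\ref{eqn:minweakstatecon}) and its first sum vanishes. Comparing $u_\nu$ against $\bar{u}$ gives
\[
J^\theta_\nu(Su_\nu,u_\nu) \;\leq\; J^\theta_\nu(S\bar{u},\bar{u}) \;=\; \nu\, J^\theta(S\bar{u},\bar{u}).
\]
Dividing by $\nu>0$ yields
\[
\frac{1}{2\nu}\sum_{\omega\in I}(Su_\nu(\omega)-g_\omega)^2 + J^\theta(Su_\nu,u_\nu) \;\leq\; J^\theta(S\bar{u},\bar{u}),
\]
which simultaneously supplies a uniform $L^2(\Omega)$ bound on $u_\nu$ and shows that $\sum_{\omega\in I}(Su_\nu(\omega)-g_\omega)^2 \to 0$ as $\nu\to 0$.

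The uniform bound lets us extract a subsequence (not relabeled) with $u_\nu \rightharpoonup u^\star$ in $L^2(\Omega)$. Since $S:L^2(\Omega)\to H^2(\Omega)\cap H^1_0(\Omega)$ is linear and continuous, $Su_\nu\rightharpoonup Su^\star$ in $H^2\cap H^1_0$; the compact embedding $H^2(\Omega)\hookrightarrow\hookrightarrow C(\bar\Omega)$ (available for $n=2,3$ under our hypotheses on $\Omega$) then upgrades this to strong convergence in $C_0(\Omega)$, so in particular $Su_\nu(\omega)\to Su^\star(\omega)$ for each $\omega\in I$. Combined with the penalty bound this gives $Su^\star(\omega)=g_\omega$, i.e.\ $u^\star$ is admissible for (\ref{eqn:minstatecon}). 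Weak lower semicontinuity of $J^\theta$ (the $\tfrac12\|\cdot\|_{L^2}^2$ part is weakly lsc, and the $\tfrac{\theta}{2}\|S\cdot-g_d\|_{L^2}^2$ part is actually weakly continuous thanks to the strong convergence just obtained), together with the bound $J^\theta(Su_\nu,u_\nu)\leq J^\theta(S\bar{u},\bar{u})$, yields
\[
J^\theta(Su^\star,u^\star) \;\leq\; \liminf_{\nu\to 0} J^\theta(Su_\nu,u_\nu) \;\leq\; J^\theta(S\bar{u},\bar{u}).
\]
Strict convexity of $J^\theta$ and uniqueness of the minimizer of (\ref{eqn:minstatecon}) force $u^\star=\bar{u}$, and the standard subsequence-subsequence principle promotes subsequential convergence to convergence of the whole net. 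The claimed $H_0^1$ weak convergence of $Su_\nu$ is then immediate from the continuity of $S:L^2(\Omega)\to H_0^1(\Omega)$.

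The main obstacle is the passage to the limit in the point evaluations $Su_\nu(\omega)$: the penalty estimate only says that these point values converge to $g_\omega$ in a Cesàro-like sense, and we need pointwise convergence along a subsequence to identify the limit as admissible. This is what forces us to invoke the compact Sobolev embedding $H^2\hookrightarrow\hookrightarrow C(\bar\Omega)$, which is the only step that genuinely uses the low-dimensional setting; the rest is the standard penalty-method template.
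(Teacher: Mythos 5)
Your proposal is correct and follows essentially the same quadratic-penalty template as the paper: compare against a feasible competitor to get the uniform bound and the vanishing of the penalty term, extract a weakly convergent subsequence, verify feasibility of the limit, and conclude via weak lower semicontinuity and uniqueness of the constrained minimizer. The only cosmetic differences are that the paper compares against an auxiliary feasible $\hat{u}$ rather than $\bar{u}$ itself, and you are somewhat more explicit than the paper about why the point values $Su_\nu(\omega)$ pass to the limit (compactness of $H^2(\Omega)\hookrightarrow C(\bar\Omega)$, though weak $H^2$ convergence alone already suffices since point evaluation is a bounded functional on $H^2$ for $n\le 3$).
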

\begin{proof}
First note that there exists a function $\hat{u} \in L^2(\Omega)$ such that $S \hat{u}(\omega)=g_\omega$ for all $\omega \in I$ (see Lemma 1 in \cite{Leykekhman2013}), so $J_{\nu}^\theta( S u_\nu,u_\nu) \leq \nu J^\theta(S \hat{u},\hat{u})$. For all $\nu>0$, $(S \hat{u},\hat{u})$ is feasible for (\ref{eqn:minweakstatecon}) so
\begin{equation}
\label{eqn:unifeps}
\frac{\nu}{2} \norm{u_\nu}^2_{L^2(\Omega)} \leq J_{\nu}^\theta( S u_\nu,u_\nu) \leq \nu J^\theta(S \hat{u},\hat{u}) \leq C \nu
\end{equation}
with $C$ independent of $\nu$. So $u_\nu$ is uniformly bounded with respect to $\nu$ in $L^2(\Omega)$, which means for every sequence $\nu_k \to 0$ there exists a sequence $u_{\nu_k} \rightharpoonup \tilde{u}$ in $L^2(\Omega)$. Moreover (\ref{eqn:unifeps}) and the stability result
\[
\norm{S u_{\nu_k}}_{H_0^1(\Omega)} \leq C \norm{u_{\nu_k}}_{L^2(\Omega)}
\]
with $C$ independent of $u_{\nu_k}$ allows us to find a further subsequence, which we also denote by $\{\nu_k\}$, such that $S u_{\nu_k} \rightharpoonup \tilde{y}$ in $H_0^1(\Omega)$. Then taking the limit in (\ref{eqn:state}) we see that $\tilde{y}=S \tilde{u}$. To complete the proof we need to show that $\tilde{u}=\bar{u}$, which we do by showing that $(S \tilde{u},\tilde{u})$ is feasible for (\ref{eqn:minstatecon}) and that $J^\theta(S \tilde{u},\tilde{u}) \leq J^\theta(S\bar{u},\bar{u})$. 

Note that the same reasoning as for (\ref{eqn:unifeps}) gives $\frac{1}{\nu} \sum_{\omega \in I} (S u_\nu(\omega)-g_\omega)^2 \leq C$ independently of $\nu$. Therefore we must have $S u_\nu(\omega) \to g_\omega$. So $S \tilde{u}(\omega)=g_\omega$ for $\omega \in I$ and $(S \tilde{u},\tilde{u})$ is feasible for (\ref{eqn:minstatecon}).

The weak lower semicontinuity of $J^\theta$ and $J^\theta(S u_{\nu_k},u_{\nu_k}) \leq \frac{J^\theta_{\nu_k} (S u_{\nu_k},u_{\nu_k})}{\nu_k}$ implies
\[
J^\theta(S \tilde{u},\tilde{u}) \leq \liminf_{k \to \infty} J^\theta(S u_{\nu_k},u_{\nu_k}) \leq \liminf_{k \to \infty} \frac{J^\theta_{\nu_k} (S u_{\nu_k},u_{\nu_k})}{\nu_k}.
\]
Also the optimality of $(S u_{\nu_k},u_{\nu_k})$ for (\ref{eqn:minweakstatecon}) and $\frac{J^\theta_{\nu_k}(S \bar{u},\bar{u})}{\nu_k} = J^\theta(S \bar{u},\bar{u})$ implies
\[
\liminf_{k \to \infty} \frac{J^\theta_{\nu_k}(S u_{\nu_k},u_{\nu_k})}{\nu_k} \leq \liminf_{k \to \infty} \frac{J^\theta_{\nu_k}(S\bar{u} ,\bar{u})}{\nu_k} = J^\theta(S \bar{u},\bar{u}).
\]
Combining these we get
\[
J^\theta(S \tilde{u},\tilde{u}) \leq J^\theta(S \bar{u},\bar{u}),
\]
so we have proved the result.
\end{proof}

\section{Discretisation}
\label{sec:discpoint}

In this section we discretise the state equation using a finite element method and use this to formulate two different discrete problems. We then derive discrete optimality conditions for each problem. 
 
We now make slightly stronger assumptions on $\Omega$ than were necessary for the problem formulation and analysis in the previous section. From now onwards assume that $\Omega$ is convex with a $C^2$ boundary. 
The assumption of convexity simplifies the presentation since then the finite element space for the state (defined shortly) is a subset of $C_0(\Omega)$. Note that if the state equation had Neumann boundary conditions (see Remark~\ref{rem:neu}) then nonconvex domains would not cause this complication. 
A $C^2$ boundary is sufficiently smooth that for $2 \leq s < \infty$,
\begin{equation}
\label{eqn:lasd}
\norm{S \eta}_{W^{2,s}(\Omega)} \leq C(s) \norm{\eta}_{L^s(\Omega)} \quad \forall \eta \in L^s(\Omega)
\end{equation}
(see e.g.\ Theorems~9.14 and 9.15 in \cite{Trudinger}). 

We can take a family of polygonal approximations $\Omega_h \subset \Omega$ such that the vertices of $\partial \Omega_h$ lie on $\partial \Omega$ and $\abs{\Omega \setminus \Omega_h} \leq C h^2$. On each $\Omega_h$ we can construct a conforming triangulation $T_h$ of triangles or tetrahedra $T$ with maximum diameter $h:=\max_{T \in T_h} h(T)$, where $h(T)$ is the diameter of an element $T$. Additionally suppose that the family of triangulations are conforming and quasi-uniform i.e.\ there exists a constant $C$ such that
\[
\frac{h(T)}{\rho(T)} \leq C \quad \forall T \in T_h,
\]
where $\rho(T)$ is the radius of the largest ball contained in $T$, and there exists a constant $C$ such that
\[
\frac{h}{h(T)} \leq C \quad \forall T \in T_h
\]
(see e.g.\ Chapter 3 in \cite{Ciarlet1978}). We can define the following family of discrete spaces of piecewise linear globally continuous finite elements which vanish on the boundary:
\begin{align*}
V_h := \{v_h \in C_0(\Omega) : v_h|_T \in P_1(T) \text{ for all } T \in T_h \text{ and } v_h|_{\Omega \setminus \Omega_h} = 0 \}.
\end{align*}
Here $P_1(T)$ is the set of affine functions over $T$. Our motivation for using this finite element space (rather than, for example, a space of piecewise constant finite elements) is that it is a subspace of $C_0(\Omega)$. 

We also construct a family of triangulations $T^\sigma$ of triangles or tetrahedra with maximum element diameter $\sigma$. We allow elements on the boundary to have one curved face, and assume that $T^\sigma$ is conforming and shape regular (as we did for $T_h$). Note that the family of triangulations $T^\sigma$ potentially has nothing in common with $T_h$. We can now define the following discrete space $U_{ad,\sigma}$ for the control:
\begin{align*}
U_\sigma &:= \{u_\sigma \in C(\bar{\Omega}) : u_\sigma|_T \in P_1(T) \text{ for all } T \in T^\sigma \}, \\
U_{ad,\sigma} &:= \{u_{\sigma} \in U_\sigma : a \leq u_\sigma \leq b \}. 
\end{align*}
This is a space of piecewise linear globally continuous finite elements (as was $V_h$) with $U_{ad,\sigma} \subset U_{ad}$, however we do not require the functions to vanish at the boundary. Recall from Corollary~\ref{cor:reg} that $u \in W^{1,q'}(\Omega)$ for all $q' \in (\frac{2n}{n+2}, \frac{n}{n-1} )$, and piecewise constant finite elements approximate such functions almost as well as piecewise linear finite elements. However we define $U_{ad,\sigma}$ to use piecewise linear finite elements as we want to allow taking the same discrete space for the control and state. This can simplify implementations.
 
For $U_{\sigma}$ the following approximation property holds: There exists an interpolation operator $\Pi_\sigma:W^{l,p}(\Omega) \to U_\sigma$ ($l=1,2$; $1\leq p< \infty$) 
such that
\begin{align}
\norm{v-\Pi_\sigma v}_{W^{m,p}(\Omega)} &\leq C\sigma^{1-m} \norm{v}_{W^{1,p}(\Omega)}, \quad m=0,1. \label{eqn:approx} 
\end{align}
Such an interpolation operator can be defined as in \cite{SCOTT1990}. It also has the property that $v \in U_{ad}$ implies $\Pi_\sigma v \in U_{ad}$. 

We now introduce discrete approximations of $S$ and $S^*$. Define $S_h:L^2(\Omega) \to C_0(\Omega)$ by $S_h \eta=y_h$, where $y_h$ satisfies
\begin{equation} \label{eqn:statedisc}
y_h \in V_h, \quad a(y_h,v_h) = (\eta,v_h) \quad \forall v_h \in V_h.
\end{equation}
It is a standard result that this problem has a unique solution. We now prove some estimates for $S_h$ that will be useful for the numerical analysis.

\begin{lemma}
\label{lem:prelim}
For $\eta \in L^s(\Omega)$ and $2\leq s<\infty$,
\begin{equation}
\norm{S\eta-S_h \eta}_\infty \leq C(s)h^{2-\frac{n}{s}} \norm{\eta}_{L^s(\Omega)}, \quad n=2,3.
\end{equation}
\end{lemma}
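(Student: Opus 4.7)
The plan is to decompose the error in the standard way as
$$\norm{S\eta - S_h\eta}_\infty \leq \norm{y - I_h y}_\infty + \norm{I_h y - y_h}_\infty,$$
where $y := S\eta$, $y_h := S_h\eta$, and $I_h y \in V_h$ is the nodal interpolant of $y$. The interpolant is well defined since, for $s \geq 2 > n/2$ with $n \in \{2,3\}$, the Sobolev embedding $W^{2,s}(\Omega) \hookrightarrow C(\bar{\Omega})$ applies. The two core ingredients I would combine are the elliptic regularity bound (\ref{eqn:lasd}), which gives $\norm{y}_{W^{2,s}(\Omega)} \leq C(s)\norm{\eta}_{L^s(\Omega)}$, and the classical pointwise interpolation estimate
$$\norm{y - I_h y}_\infty \leq C(s)\, h^{2-\frac{n}{s}}\, \norm{y}_{W^{2,s}(\Omega)},$$
which follows from Sobolev embedding of $W^{2,s}(\Omega)$ into a H\"older space combined with local Taylor estimates on each $T \in T_h$. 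Together these handle the interpolation part of the decomposition with the stated rate.

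For the Galerkin part $\norm{I_h y - y_h}_\infty$, I would invoke a classical maximum-norm quasi-optimality estimate for piecewise linear Galerkin approximations on quasi-uniform meshes of a $C^2$ domain (due, e.g., to Nitsche, Scott, and Schatz--Wahlbin). Under our assumptions on $A$, on $T_h$, and on the polygonal approximation $\Omega_h$, such results yield
$$\norm{y - y_h}_\infty \leq C\, \inf_{v_h \in V_h}\norm{y - v_h}_\infty,$$
possibly up to a factor of $1+\abs{\log h}$ that can be absorbed into the $s$-dependent constant. Substituting $v_h = I_h y$ and chaining the preceding displays then gives the stated bound. The only mild technicalities are the treatment of the boundary mismatch between $\Omega$ and $\Omega_h$ (handled via $\abs{\Omega \setminus \Omega_h} \leq C h^2$ together with the fact that elements of $V_h$ vanish outside $\Omega_h$) and the verification that the regularity assumptions of the cited $L^\infty$ estimate are satisfied by our variable-coefficient operator $A$.

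The main obstacle is the pointwise Galerkin quasi-optimality step itself: its proof, using a regularized (discrete) Green's function for $A$ together with weighted Sobolev-type bounds, is classical but delicate, so I would cite it rather than reprove it. Once it is in hand, the lemma follows from a few routine applications of Sobolev embedding and interpolation theory, with a final constant $C(s)$ that blows up as $s \downarrow n/2$ but remains finite for every $s \in [2,\infty)$ in both $n=2$ and $n=3$.
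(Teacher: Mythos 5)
Your decomposition $\norm{y-I_hy}_\infty+\norm{I_hy-y_h}_\infty$ and your treatment of the interpolation term match the paper, but your handling of the Galerkin term diverges from the paper's proof in a way that creates a real gap. You invoke max-norm quasi-optimality of the Ritz projection (Nitsche/Schatz--Wahlbin), which for \emph{piecewise linear} elements carries a factor $1+\abs{\log h}$ that is known to be unremovable in general (Haverkamp). You claim this factor "can be absorbed into the $s$-dependent constant", but it cannot: it diverges as $h\to 0$, and it also cannot be traded against the exponent while keeping the $L^s(\Omega)$ norm on the right-hand side, since lowering $s$ to $s''<s$ (which is what the embedding $L^s\hookrightarrow L^{s''}$ permits on a bounded domain) only \emph{worsens} the exponent to $2-\frac{n}{s''}<2-\frac{n}{s}$. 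So your argument delivers $C(s)(1+\abs{\log h})h^{2-\frac{n}{s}}\norm{\eta}_{L^s(\Omega)}$, strictly weaker than the stated bound, and the loss matters precisely at $s=2$, which is the case used downstream in (\ref{eqn:errinf}), (\ref{eqn:err2}) and Theorems~\ref{thm:main} and \ref{thm:main22}.

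The paper avoids the logarithm by never using $L^\infty$ stability of the Galerkin projection. Instead it bounds $\norm{I_hS\eta-S_h\eta}_{L^\infty(\Omega_h)}$ by the inverse inequality (\ref{eqn:invineq}), paying $h^{-\frac{n}{s}}$, and then controls $\norm{I_hS\eta-S_h\eta}_{L^s(\Omega_h)}$ via the triangle inequality together with the log-free $L^s$ finite element error estimate $\norm{Sv-S_hv}_{L^s(\Omega)}\leq C(s)h^2\norm{Sv}_{W^{2,s}(\Omega)}$, giving exactly $h^{2-\frac{n}{s}}$ for every finite $s$. If you want to salvage your route you would need to replace the quasi-optimality step with this inverse-estimate argument. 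One further minor point: on the skin $\Omega\setminus\Omega_h$ the measure bound $\abs{\Omega\setminus\Omega_h}\leq Ch^2$ alone does not control a supremum norm; the paper uses the H\"older continuity of $S\eta$ from $W^{2,s}(\Omega)\hookrightarrow C^{0,\lambda}(\bar\Omega)$ together with the fact that every point of the skin lies within distance $Ch^2$ of $\partial\Omega$, where $S\eta$ vanishes.
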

\begin{proof}
First we will recall some results from the literature that hold under the assumptions we make in this paper. By (34) in \cite{Leykekhman2013} we have that
\[
\norm{S v-S_h v}_{L^s(\Omega)} \leq C(s) h^2 \norm{S v}_{W^{2,s}(\Omega)} \quad \forall v \in L^s(\Omega).
\] 
This was originally proved for $n=2$ on p438 in \cite{ScottPaper}. Applying an inverse inequality on each element of the triangulation 
gives that
\begin{equation}
\label{eqn:invineq}
\norm{v_h}_{L^\infty(\Omega_h)} \leq C(s)h^{-\frac{n}{s}}\norm{v_h}_{L^s(\Omega)} \quad \forall v_h \in V_h 
\end{equation}
(see e.g.\ \cite{Ciarlet1978}). Similarly, for the piecewise linear interpolation operator $I_h:C_0(\Omega) \to V_h$ and $r \in [1,\infty]$ we have
\[
\norm{v -I_hv}_{L^r(\Omega_h)} \leq C(s) h^{2 + \frac{1}{r}-\frac{1}{s}} \norm{v}_{W^{2,s}(\Omega_h)} \quad \forall v \in W^{2,s}(\Omega).
\]
(see e.g.\ Theorem~3.1.5 in \cite{Ciarlet1978}).

Combining these results we get that
\begin{align*}
\norm{S\eta-S_h \eta}_{L^\infty(\Omega_h)} &\leq \norm{S \eta - I_h S \eta}_{L^\infty(\Omega_h)} + \norm{I_h S\eta - S_h \eta}_{L^\infty(\Omega_h)}, \\
&\leq C(s)(h^{2-\frac{n}{s}} \norm{S \eta}_{W^{2,s}(\Omega)} + h^{-\frac{n}{s}} \norm{I_h S\eta - S_h \eta}_{L^s(\Omega_h)}) \\
&\leq C(s) h^{-\frac{n}{s}} ( h^2 \norm{S \eta}_{W^{2,s}(\Omega)} + \norm{I_h S\eta - S \eta}_{L^s(\Omega_h)} + \norm{S \eta - S_h \eta}_{L^s(\Omega_h)}) \\
&\leq C(s) h^{2-\frac{n}{s}}(\norm{S \eta}_{W^{2,s}(\Omega)} + \norm{\eta}_{L^{s}(\Omega)}) \\
&\leq C(s) h^{2-\frac{n}{s}} \norm{\eta}_{L^{s}(\Omega)}.
\end{align*}

We now need to prove a supremum norm error estimate for the skin $\Omega \setminus \Omega_h$. By Theorem 4.12 Part II in \cite{Adams2003}:
\begin{itemize}
\item If $s\geq n$ then $W^{2,s}(\Omega) \hookrightarrow C^{0,\lambda}(\bar{\Omega})$ for $0<\lambda<1$.
\item If $\frac{n}{2}<s<n$ then $W^{2,s}(\Omega) \hookrightarrow C^{0,\lambda}(\bar{\Omega})$ for $0<\lambda\leq2-\frac{n}{s}$.
\end{itemize}
Let 
\[
\bar{\lambda}(s):=
\begin{cases}
1-\frac{n}{2s} & s \geq n, \\
2-\frac{n}{s} & \frac{n}{2} \leq s<n, \\
\end{cases}
\]
and observe that for $x_1 \in \Omega \setminus \Omega_h$ we have
\[
\inf_{x_2 \in \partial \Omega} \abs{ S \eta(x_1) - S \eta(x_2) } \leq C(s) \inf_{x_2 \in \partial \Omega} \abs{x_1-x_2}^{\bar{\lambda}(s)}.
\]
From the smoothness of the domain it follows that 
\[
\inf_{x_2 \in \partial \Omega} \abs{x_1-x_2} \leq C h^2 \quad \forall x_1 \in \Omega \setminus \Omega_h
\]
and for $2 \leq s< \infty$ we have $h^{2\bar{\lambda}(s)} \leq Ch^{2-\frac{n}{s}}$ for sufficiently small $h$. Using this and $S \eta |_{\partial \Omega} =0$ we get
\[
\abs{ S \eta(x_1) } \leq C(s) h^{2-\frac{n}{s}} \quad \forall x_1 \in \Omega \setminus \Omega_h.
\]
Hence
\[
\norm{S\eta-S_h \eta}_\infty \leq \max( \norm{S \eta-S_h \eta}_{L^\infty(\Omega_h)}, \norm{S \eta-S_h \eta}_{L^\infty(\Omega \setminus \Omega_h)}) \leq C(s) h^{2-\frac{n}{s}} \norm{\eta}_{L^{s}(\Omega)}. 
\]
\end{proof}

\begin{corollary}
\label{cor:prelim}
For $\eta \in L^2(\Omega)$,
\begin{equation}
\label{eqn:errinf}
\norm{S\eta-S_h \eta}_\infty \leq Ch^{2-\frac{n}{2}} \norm{\eta}_{L^2(\Omega)}, \quad n=2,3.
\end{equation}
For $\eta \in W^{1,q'}(\Omega)$ with $q' \in (\frac{2n}{n+2}, \frac{n}{n-1} )$,
\begin{equation}
\label{eqn:precise}
\norm{S \eta-S_h \eta}_\infty \leq C(q') h^{3-\frac{n}{q'}} \norm{\eta}_{W^{1,q'}(\Omega)} \quad n=2,3.
\end{equation}
\end{corollary}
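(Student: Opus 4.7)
The plan is to deduce both inequalities directly from Lemma~\ref{lem:prelim}, by choosing the exponent $s$ appropriately and, for the second estimate, exploiting the Sobolev embedding $W^{1,q'}(\Omega)\hookrightarrow L^s(\Omega)$ to trade a derivative for integrability.

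For the first estimate, I would simply take $s=2$ in Lemma~\ref{lem:prelim}, which is admissible since $2\leq s<\infty$ is allowed. This immediately yields
\[
\norm{S\eta-S_h\eta}_\infty \leq C\,h^{2-\frac{n}{2}}\norm{\eta}_{L^2(\Omega)}
\]
for both $n=2$ and $n=3$.

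For the second estimate, I would apply Lemma~\ref{lem:prelim} with the specific choice
\[
s = s(q') := \frac{nq'}{n-q'},
\]
which is the critical Sobolev exponent associated with $W^{1,q'}$ when $q'<n$. First I would verify that this $s$ lies in the admissible range $[2,\infty)$ for $q' \in (\frac{2n}{n+2}, \frac{n}{n-1})$: since $q'<\frac{n}{n-1}\leq n$ (for $n=2,3$) we have $s<\infty$, and the condition $q'>\frac{2n}{n+2}$ is exactly equivalent to $s>2$. Next I would compute the exponent of $h$ arising from Lemma~\ref{lem:prelim}, namely
\[
2-\frac{n}{s} \;=\; 2-\frac{n-q'}{q'} \;=\; 3-\frac{n}{q'},
\]
which matches the claim. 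Finally, the Sobolev embedding $W^{1,q'}(\Omega)\hookrightarrow L^{s(q')}(\Omega)$ (Remark~\ref{rem:emb} combined with the standard Gagliardo–Nirenberg–Sobolev inequality for $q'<n$) gives
\[
\norm{\eta}_{L^s(\Omega)} \leq C(q')\,\norm{\eta}_{W^{1,q'}(\Omega)},
\]
and combining with Lemma~\ref{lem:prelim} produces the desired bound.

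There is no real obstacle here; the only thing to be careful about is the bookkeeping on the admissible ranges of $q'$ and $s$, and the algebraic identity $2-n/s(q')=3-n/q'$. The constant $C(q')$ inherits the blow-up from both Lemma~\ref{lem:prelim} (as $s\downarrow 2$) and from the Sobolev embedding (as $q'\downarrow \frac{2n}{n+2}$), which is why the estimate is stated with a constant depending on $q'$.
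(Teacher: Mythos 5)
Your proposal is correct and follows essentially the same route as the paper: take $s=2$ in Lemma~\ref{lem:prelim} for the first bound, and for the second choose $s=\frac{nq'}{n-q'}$ so that the Sobolev embedding $W^{1,q'}(\Omega)\hookrightarrow L^{s}(\Omega)$ together with the identity $2-\frac{n}{s}=3-\frac{n}{q'}$ gives the claimed rate. The range-checking and algebra you carry out are exactly the bookkeeping the paper's proof relies on.
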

\begin{proof}
The first estimate follows by taking $s=2$ in Lemma~\ref{lem:prelim}. The other estimate follow by combining the lemma with Sobolev embedding results. In particular, if  $\eta \in W^{1,q'}(\Omega)$ with $q' \in (\frac{2n}{n+2}, \frac{n}{n-1} )$ then $W^{1,q'}(\Omega) \hookrightarrow L^{s}(\Omega)$ with $s = \frac{nq'}{n-q'} \geq 2$. So
\[
C(s)h^{2-\frac{n}{s}}\norm{\eta}_{L^{s}(\Omega)} \leq C(q')  h^{3-\frac{n}{q'}} \norm{\eta}_{W^{1,q'}(\Omega)},
\]
which proves the second estimate. Note that this estimate is proved in a similar way in Theorem~3 in \cite{Leykekhman2013}.
\end{proof}

We will use (\ref{eqn:errinf}) in Section~\ref{sec:semidisc} and (\ref{eqn:precise}) in Section~\ref{sec:energy} to prove $L^2(\Omega)$ error estimates for the point optimal control problem.

Since $S_h$ is continuous (which follows from (\ref{eqn:errinf})) and linear it has an adjoint operator $S^*_h:\mathcal{M}(\Omega) \to L^2(\Omega)$. Note that the same calculation as in Lemma~\ref{lem:adj} gives that $p_h = S^*_h \mu$ if and only if $p_h$ satisfies
\begin{equation} \label{eqn:adjdisc}
p_h \in V_h, \quad a(v_h,p_h) = \int_\Omega v_h \rd \mu \quad \forall v_h \in V_h.
\end{equation}
We have the following error estimate for $S_h^*$, which we will use in Section~\ref{sec:semidisc}: For $\mu \in \mathcal{M}(\Omega)$,
\begin{equation}
\label{eqn:err2}
\norm{S^*\mu - S^*_h \mu}_{L^2(\Omega)} \leq C h^{2-\frac{n}{2}} \norm{\mu}_{\mathcal{M}(\Omega)},
\end{equation}
with $C$ independent of $\mu$ and $h$. This follows by noting that for any $v \in L^2(\Omega)$,
\[
(S^*\mu-S_h^*\mu,v) = \langle \mu, S v-S_h v \rangle_{\mathcal{M}(\Omega)} \leq \norm{\mu}_{\mathcal{M}(\Omega)} \norm{Sv-S_hv}_\infty.
\]
Then using (\ref{eqn:errinf}) gives the result. The estimate (\ref{eqn:err2}) was originally proved for convex polygonal domains in Theorem~3 in \cite{Casas1985}, and related theory is developed in \cite{Scott1973}.

\begin{remark}
The estimates in Lemma~\ref{lem:prelim} and Corollary~\ref{cor:prelim} still hold if $S$ and $S_h$ are appropriately defined control-to-state operators corresponding to an elliptic PDE with Neumann boundary conditions. 
\end{remark}

\subsection{Discrete problems}

We are now ready to introduce the two discrete problems that we consider in our numerical analysis.

Define the discrete reduced objective functional $\hat{J}_h: L^2(\Omega) \to \R$ by
\begin{equation*}
\hat{J}_h(\eta) = J(S_h \eta, \eta) = \frac{1}{2} \sum_{\omega \in I} (S_h \eta(\omega)-g_\omega)^2 + \frac{\nu}{2} \norm{\eta}^2_{L^2(\Omega)}.
\end{equation*}
Then the first discrete problem we consider is ($\mathrm{M}1_h$):
\begin{equation}
\label{eqn:controlprob2point} 
\min \hat{J}_h(\eta_\sigma) \text{ over } \eta_\sigma \in U_{ad,\sigma}.
\end{equation}
 
\begin{proposition} There is a unique solution $u_{\sigma,h} \in U_{ad,\sigma}$ to $(\mathrm{M}1_h)$ (see (\ref{eqn:controlprob2point})). Moreover, $u_{\sigma,h} \in U_{ad,\sigma}$ is a solution of $(\mathrm{M}1_h)$ if and only if there exists $p_h \in V_h$ such that
\begin{subequations}
\begin{align}
&u_{\sigma,h} \in U_{ad,\sigma}, \quad (p_h + \nu u_{\sigma,h},v_\sigma-u_{\sigma,h}) \geq 0 && \forall v_\sigma \in U_{ad,\sigma} \label{eqn:system2apoint}  \\
&a(v_h,p_h) = \sum_{\omega \in I} (S_h u_{\sigma,h}(\omega)-g_\omega)v_h(\omega) && \forall v_h \in V_h. \label{eqn:system2bpoint} 
\end{align}
\end{subequations}
\end{proposition}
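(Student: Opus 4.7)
The plan is to mirror the argument used for the continuous problem (Theorems~\ref{thm:exist} and \ref{thm:optpoint}), replacing $S$, $S^*$ and the continuous test spaces by their discrete counterparts $S_h$, $S_h^*$ and $V_h$. Since $U_{ad,\sigma}$ is a closed convex subset of the finite-dimensional space $U_\sigma$, the analytical machinery is in fact lighter than in the continuous case.

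For existence and uniqueness, I would first observe that $\hat{J}_h$ is strictly convex on $L^2(\Omega)$ because $\eta \mapsto S_h\eta$ is linear (so the point-fidelity sum is convex) and the Tikhonov term $\tfrac{\nu}{2}\norm{\eta}_{L^2(\Omega)}^2$ is strictly convex. In the constrained case $a,b \in \R$, $U_{ad,\sigma}$ is a nonempty, convex and compact subset of the finite-dimensional space $U_\sigma$, so a minimizer exists by continuity of $\hat{J}_h$ and Weierstrass. In the unconstrained case $b=-a=\infty$, we have $U_{ad,\sigma}=U_\sigma$, which is finite-dimensional, and $\hat{J}_h$ is coercive in $\norm{\cdot}_{L^2(\Omega)}$ thanks to the $\nu$-term, again giving existence. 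Strict convexity yields uniqueness in both cases.

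For the optimality system, I would invoke the standard variational characterisation (Lemma~2.21 in \cite{Troltzsch2005}, as used in Theorem~\ref{thm:optpoint}): since $\hat{J}_h$ is convex and G\^ateaux differentiable on $L^2(\Omega)$ and $U_{ad,\sigma}$ is a nonempty convex subset of the Banach space $U_\sigma$, a point $u_{\sigma,h}\in U_{ad,\sigma}$ minimises $\hat{J}_h$ over $U_{ad,\sigma}$ iff
\[
\langle \hat{J}_h'(u_{\sigma,h}), v_\sigma - u_{\sigma,h}\rangle \geq 0 \quad \forall v_\sigma \in U_{ad,\sigma}.
\]
A direct computation gives
\[
\langle \hat{J}_h'(u_{\sigma,h}), w\rangle = \sum_{\omega \in I} (S_h u_{\sigma,h}(\omega) - g_\omega)\,S_h w(\omega) + \nu\,(u_{\sigma,h},w).
\]

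To put this in the stated form, I would introduce the discrete counterpart of the measure $\lambda_u$ from Theorem~\ref{thm:optpoint}, namely
\[
\lambda_{u_{\sigma,h}} := \sum_{\omega \in I}(S_h u_{\sigma,h}(\omega) - g_\omega)\,\delta_\omega \in \mathcal{M}(\Omega),
\]
and set $p_h := S_h^*\lambda_{u_{\sigma,h}} \in V_h$. By the discrete adjoint characterisation (\ref{eqn:adjdisc}),
\[
a(v_h,p_h) = \int_\Omega v_h \,\rd\lambda_{u_{\sigma,h}} = \sum_{\omega \in I}(S_h u_{\sigma,h}(\omega) - g_\omega)\,v_h(\omega) \quad \forall v_h \in V_h,
\]
which is (\ref{eqn:system2bpoint}). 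Using $(S_h^*\lambda_{u_{\sigma,h}}, w) = \sum_{\omega}(S_h u_{\sigma,h}(\omega) - g_\omega)S_h w(\omega)$ in the variational inequality above turns it into $(p_h + \nu u_{\sigma,h}, v_\sigma - u_{\sigma,h}) \geq 0$ for all $v_\sigma \in U_{ad,\sigma}$, i.e.\ (\ref{eqn:system2apoint}). Conversely, given $p_h \in V_h$ and $u_{\sigma,h}\in U_{ad,\sigma}$ satisfying the pair (\ref{eqn:system2apoint})--(\ref{eqn:system2bpoint}), uniqueness of the discrete adjoint forces $p_h = S_h^*\lambda_{u_{\sigma,h}}$, so the variational inequality above holds and $u_{\sigma,h}$ minimises $\hat{J}_h$.

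I expect no serious obstacle: everything is finite-dimensional once the mesh is fixed, and the only bookkeeping is to check that $\sum_\omega (S_h u_{\sigma,h}(\omega)-g_\omega)\delta_\omega$ lies in $\mathcal{M}(\Omega)$ so that $S_h^*$ applies, which is immediate since $I\subset\Omega$ is finite and $V_h\subset C_0(\Omega)$.
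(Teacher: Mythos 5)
Your proposal is correct and follows essentially the same route as the paper: the paper's proof simply states that the result "follows from the same considerations as in Theorems~\ref{thm:exist} and \ref{thm:optpoint}" and records that $p_h = S_h^*\lambda_{h,u_{\sigma,h}}$ with $\lambda_{h,\eta}(A)=\int_A (S_h\eta-g_d)\,\rd\mu$, which is exactly the measure $\sum_{\omega\in I}(S_h u_{\sigma,h}(\omega)-g_\omega)\delta_\omega$ you introduce. The extra details you supply (Weierstrass/coercivity for existence, uniqueness of the discrete adjoint for the converse) are all consistent with the paper's intended argument.
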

\begin{proof}
The proof follows from the same considerations as in Theorems \ref{thm:exist} and \ref{thm:optpoint}. Note that $p_h = S_h^* \lambda_{h, u_{\sigma,h}}$ where for $\eta \in L^2(\Omega)$ we define $\lambda_{h,\eta} \in \mathcal{M}(\Omega)$ by
\begin{equation}
\label{eqn:meassy}
\lambda_{h,\eta}(A) = \int_A (S_h \eta - g_d) \rd \mu \quad \forall A \in \mathcal{B}
\end{equation}
with $\mu = \sum_{\omega \in I} \delta_\omega$.
\end{proof}

We refer to $(\mathrm{M}1_h)$ as the explicitly discretised problem as we make the control belong to a space of discrete functions.
 
Alternatively we could use the variational discretisation concept from \cite{Hinze2005} and leave the control in the infinite dimensional space $U_{ad}$. This leads to the potentially different (see Remark~\ref{rem:potdiff}) discrete problem ($\mathrm{M}2_h$):
\begin{equation}
\label{eqn:controlprob2point2} 
\min \hat{J}_h(\eta) \text{ over } \eta \in U_{ad}.
\end{equation}

\begin{proposition} There is a unique solution $u_h \in U_{ad}$ to $(\mathrm{M}2_h)$ (see (\ref{eqn:controlprob2point2})). Moreover, $u_h \in L^2(\Omega)$ is a solution of $(\mathrm{M}2_h)$ if and only if there exists $p_h \in V_h$ such that
\begin{subequations} \label{eqn:system22}
\begin{align}
&u_h \in U_{ad}, \quad (p_h + \nu u_h,v-u_h) \geq 0 && \forall v \in U_{ad} \label{eqn:system2apoint2}  \\
&a(v_h,p_h) = \sum_{\omega \in I} (S_h u_h(\omega)-g_\omega)v_h(\omega) && \forall v_h \in V_h. \label{eqn:system2bpoint2} 
\end{align}
\end{subequations}
\end{proposition}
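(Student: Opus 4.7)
The plan is to mirror the proofs of Theorems~\ref{thm:exist} and~\ref{thm:optpoint} for the continuous problem, with the control-to-state operator $S$ replaced by its discrete counterpart $S_h$. Since the admissible set $U_{ad}$ is unchanged, most of the arguments carry over essentially verbatim; the only genuinely discrete object produced by the analysis is the adjoint state $p_h \in V_h$.

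For existence and uniqueness, I would first observe that $\hat{J}_h$ is strictly convex, continuous and coercive on $L^2(\Omega)$. Strict convexity follows from the $\nu\|\eta\|^2_{L^2(\Omega)}/2$ term together with the convex quadratic point-fidelity term (which is finite since $S_h\eta \in V_h \subset C_0(\Omega)$ is evaluated at the points $\omega \in I$); continuity on $L^2(\Omega)$ follows from Corollary~\ref{cor:prelim} applied to the triangle inequality $\|S_h\eta\|_\infty \leq \|S\eta-S_h\eta\|_\infty + \|S\eta\|_\infty$ and (\ref{eqn:cts}); and coercivity is immediate from the $L^2$ regularisation. Since $U_{ad} \subset L^2(\Omega)$ is nonempty, closed and convex, the standard direct method (cf.\ Theorem~\ref{thm:exist}) yields a unique minimiser $u_h \in U_{ad}$.

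For the optimality system, I would use that $\hat{J}_h$ is convex and G\^ateaux differentiable on $L^2(\Omega)$, so $u_h \in U_{ad}$ is optimal if and only if
\[
\langle \hat{J}_h'(u_h), v - u_h\rangle_{L^2(\Omega)^*,L^2(\Omega)} \geq 0 \quad \forall v \in U_{ad}.
\]
A direct computation of the derivative gives
\[
\langle \hat{J}_h'(u_h), w\rangle = \sum_{\omega \in I}(S_h u_h(\omega) - g_\omega)\,S_h w(\omega) + \nu(u_h, w).
\]
Following the argument in the proof of Theorem~\ref{thm:optpoint}, I would introduce $\mu := \sum_{\omega \in I} \delta_\omega \in \mathcal{M}(\Omega)$ and, with $g_d \in C^\infty(\bar\Omega)$ chosen so that $g_d(\omega) = g_\omega$, define the measure $\lambda_{h,u_h} \in \mathcal{M}(\Omega)$ as in (\ref{eqn:meassy}). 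Setting $p_h := S_h^*\lambda_{h,u_h}$, the characterisation (\ref{eqn:adjdisc}) of $S_h^*$ immediately places $p_h$ in $V_h$ and shows that
\[
a(v_h, p_h) = \int_\Omega v_h\,\rd\lambda_{h,u_h} = \sum_{\omega \in I}(S_h u_h(\omega) - g_\omega)\,v_h(\omega) \quad \forall v_h \in V_h,
\]
which is (\ref{eqn:system2bpoint2}). Using the definition of the adjoint, $\sum_{\omega}(S_h u_h(\omega)-g_\omega) S_h w(\omega) = (S_h^* \lambda_{h,u_h}, w) = (p_h, w)$ for every $w \in L^2(\Omega)$, so the first-order condition collapses to (\ref{eqn:system2apoint2}).

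The reverse implication is obtained by reading the same chain of equalities backwards: any $p_h \in V_h$ satisfying (\ref{eqn:system2bpoint2}) must coincide with $S_h^*\lambda_{h,u_h}$ by the uniqueness of solutions to the discrete adjoint problem (\ref{eqn:adjdisc}), and then (\ref{eqn:system2apoint2}) is exactly the first-order variational inequality, which by convexity is sufficient for optimality. I do not expect any genuine obstacle here; the only point requiring care is to keep in mind that although the minimisation is over the infinite-dimensional set $U_{ad}$, the associated adjoint state $p_h$ automatically lies in $V_h$ because it is constructed via $S_h^*$, which is where the qualifier ``variational discretisation'' enters.
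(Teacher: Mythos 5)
Your proposal is correct and follows essentially the same route as the paper, whose proof simply states that the result "follows from the same considerations as in Theorems~\ref{thm:exist} and \ref{thm:optpoint}" (and, in the analogous proposition for $(\mathrm{M}1_h)$, records exactly your key identification $p_h = S_h^*\lambda_{h,u_h}$ via (\ref{eqn:meassy}) and (\ref{eqn:adjdisc})). You have merely written out in full the details the authors leave implicit, and all of those details check out.
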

\begin{proof}
The proof also follows from the same considerations as in Theorems \ref{thm:exist} and \ref{thm:optpoint}.
\end{proof}

A priori we only know that $u_h$ belongs to $U_{ad}$. However observe that (\ref{eqn:system2apoint2}) can be expressed using the pointwise projection operator $\mathbb{P}_{[a,b]}$ from (\ref{eqn:proj}) as
\begin{equation*}
u_h = \mathbb{P}_{[a,b]} \Big ( -\frac{1}{\nu}{p_h} \Big ).
\end{equation*}
So (\ref{eqn:system2apoint2}) has a simpler form than (\ref{eqn:system2apoint}), which is an $L^2(\Omega)$ projection onto a discrete space. This means $u_h$ inherits a piecewise linear structure from $p_h \in V_h$, but observe that $u_h$ does not necessarily belong to $V_h$ due to the control constraints. We refer to this as an implicit discretisation; we are not requiring $u_h$ to be a piecewise linear function, but it gains this property indirectly through the discretisation of the state. Even though $u_h$ does not necessarily belong to $V_h$, this problem can be solved computationally. We will elaborate on this in Section~\ref{sec:nummeth}.

\begin{remark}
\label{rem:potdiff}
The motivation for the implicitly discretised problem $(\mathrm{M}2_h)$ is that it allows a better approximation of the set where the control constraints are active (indicated in Figure \ref{fig:proj}), likely leading to a smaller error. For a more thorough explanation see \cite{Hinze2005}.
\end{remark}

\begin{remark}
\label{rem:discrep}
Note that if there are no active control constraints (e.g.\ if $b=-a=\infty$) and $V_h \subset U_\sigma$, then $(\mathrm{M}1_h)$ and $(\mathrm{M}2_h)$ are equivalent. In order for $V_h \subset U_\sigma$ we need $``T^\sigma \subset T_h"$. By this we mean that each element of $T^\sigma$ is contained in either a single element of $T_h$ or the skin $\Omega \setminus \Omega_h$.
\end{remark}

\begin{figure}[ht]
\centering
\includegraphics[width=0.35\textwidth]{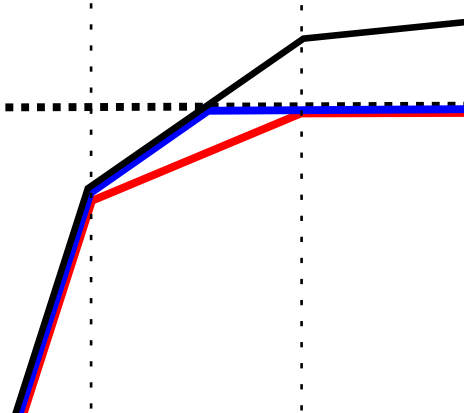}
\caption[Illustration of projections.]{An illustration in 1D of how $u_h$ is determined by $p_h$ (black line) when the discrete space for the control and state are the same and $\nu=1$. The horizontal dashed line is the value of $b$ and the vertical dashed lines marks the boundary between elements. The blue line is $u_h$ calculated from $p_h$ using (\ref{eqn:system2apoint}) and the red line is using (\ref{eqn:system2apoint2}). Assuming the $p_h$ are similar and good approximations of $p$ for both $(\mathrm{M}1_h)$ and $(\mathrm{M}2_h)$, this suggests that $(\mathrm{M}2_h)$ will give a better approximation of $u$.
}
\label{fig:proj}
\end{figure}



\section{Numerical analysis}
\label{sec:numanalpoint}

We now prove $L^2(\Omega)$ error estimates between the solution of the continuous problem (\ref{eqn:optprobpoint}) and the two discrete problems $(\mathrm{M}1_h)$ and $(\mathrm{M}2_h)$ (see (\ref{eqn:controlprob2point}) and (\ref{eqn:controlprob2point2})). 
We use two different approaches for this numerical analysis. Approach~1 in the next section allows us to prove error estimates for the two discrete problems in most (but not all) the cases we would like. Approach~2 in Section~\ref{sec:energy} only reproduces some of these error estimates, however it is simpler.

\subsection{Approach 1}
\label{sec:semidisc}

This error analysis is based on \cite{Casas2003}, where an a priori $L^2(\Omega)$ error estimate is proved for the standard optimal control problem which has an $L^2(\Omega)$ fidelity term. The approach allows us to prove $L^2(\Omega)$ error estimates for both $(\mathrm{M}1_h)$ and $(\mathrm{M}2_h)$. The only estimates it does not give are ones for $(\mathrm{M}1_h)$ when $n=3$ (but we are not able to prove these using Approach~2 either). In particular we will get the following results.

\begin{theorem}
\label{thm:main}
Assume $n=2$. Let $u$ solve (\ref{eqn:optprobpoint}) and $u_{\sigma,h}$ solve $(\mathrm{M}1_h)$ (see (\ref{eqn:controlprob2point})). Then
\begin{equation*}
\norm{u-u_{\sigma,h}}_{L^2(\Omega)} \leq C (\sqrt{\sigma} + h)
\end{equation*}
with $C$ independent of $\sigma$ and $h$.
\end{theorem}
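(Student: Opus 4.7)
My plan is to follow the approach of \cite{Casas2003}, combining the continuous and discrete optimality conditions by testing each variational inequality with an admissible perturbation drawn from the other problem. Since $U_{ad,\sigma}\subset U_{ad}$, I would first test the continuous VI (\ref{eqn:systema}) with $v = u_{\sigma,h}$, and then test the discrete VI (\ref{eqn:system2apoint}) with $v_\sigma = \Pi_\sigma u$, which lies in $U_{ad,\sigma}$ since $\Pi_\sigma$ preserves the box constraints. Adding the two inequalities and using the algebraic identity $\nu(u,u_{\sigma,h}-u) + \nu(u_{\sigma,h},u-u_{\sigma,h}) = -\nu\|u-u_{\sigma,h}\|_{L^2}^2$ would deliver the master estimate
\begin{equation*}
\nu\|u-u_{\sigma,h}\|_{L^2(\Omega)}^2 \;\le\; (\nu u_{\sigma,h} + p_{\sigma,h},\,\Pi_\sigma u - u) \;+\; (p-p_{\sigma,h},\,u_{\sigma,h}-u),
\end{equation*}
which cleanly separates a control-interpolation error $T_1$ from a state/adjoint discretisation error $T_2$.

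For $T_1$ I would apply Cauchy--Schwarz, using the approximation property (\ref{eqn:approx}) and a uniform $L^2$ bound on $p_{\sigma,h}$ (obtained from the discrete analogue of (\ref{eqn:stabbb}) together with the uniform boundedness of $S_h u_{\sigma,h}$). The targeted $\sqrt{\sigma}$ rate corresponds to $\|\Pi_\sigma u - u\|_{L^2(\Omega)} \le C\sigma$, so I would need $u\in H^1(\Omega)$: in the presence of genuine constraints, $u = \mathbb{P}_{[a,b]}(-p/\nu)$ is constant in a neighbourhood of each $\omega\in I$, because the adjoint $p$ has a logarithmic singularity there and thus $|p|/\nu$ exceeds the active-set threshold, so $\nabla u$ vanishes there and the $L^2$ norm of $\nabla u$ on the inactive set is finite by the smoothness of $p$ away from $I$; the unconstrained case is handled analogously using a H\"older inequality with $q>2$ exploiting $p_{\sigma,h}\in L^q(\Omega)$. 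Either way $|T_1|\le C\sigma$.

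The main obstacle is $T_2 = (p-p_{\sigma,h},e)$ with $e:=u_{\sigma,h}-u$. Here I would use Lemma~\ref{lem:statealt} to test the continuous state equation for $Se$ with $p\in W_0^{1,q'}(\Omega)$, and the discrete state equation for $S_h e$ with $p_{\sigma,h}\in V_h$, and then feed the results into the respective adjoint equations to obtain
\begin{align*}
(p,e) &= a(Se,p) = \sum_{\omega\in I}(Su(\omega)-g_\omega)\,Se(\omega), \\
(p_{\sigma,h},e) &= a(S_h e,p_{\sigma,h}) = \sum_{\omega\in I}(S_h u_{\sigma,h}(\omega)-g_\omega)\,S_h e(\omega).
\end{align*}
Substituting $S_h u_{\sigma,h}(\omega)-Su(\omega)=Se(\omega)-(S-S_h)u_{\sigma,h}(\omega)$ and $S_h e(\omega)-Se(\omega)=-(S-S_h)e(\omega)$ into the difference produces the non-positive term $-\sum_{\omega\in I}(Se(\omega))^2$, which may be discarded, plus cross terms that each carry a factor of $(S-S_h)u_{\sigma,h}(\omega)$ or $(S-S_h)e(\omega)$. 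Applying the pointwise bound (\ref{eqn:errinf}), which for $n=2$ reads $\|(S-S_h)\eta\|_\infty \le Ch\|\eta\|_{L^2(\Omega)}$, together with the continuity (\ref{eqn:cts}) and the finiteness of $I$, would yield $T_2 \le Ch\|e\|_{L^2(\Omega)} + Ch\|e\|_{L^2(\Omega)}^2$.

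Combining both bounds gives $\nu\|e\|_{L^2(\Omega)}^2 \le C\sigma + Ch\|e\|_{L^2(\Omega)} + Ch\|e\|_{L^2(\Omega)}^2$. Young's inequality $Ch\|e\|_{L^2(\Omega)} \le \tfrac{\nu}{4}\|e\|_{L^2(\Omega)}^2 + Ch^2$ together with absorption of the $Ch\|e\|_{L^2(\Omega)}^2$ term (valid once $h$ is small enough that $Ch\le \nu/4$) would yield $\|u-u_{\sigma,h}\|_{L^2(\Omega)}^2 \le C(\sigma + h^2)$, and taking square roots gives the claim. The trickiest point is the bookkeeping in the expansion of $T_2$: each remainder must be of order $Ch\|e\|_{L^2(\Omega)}$ (absorbable via Young) or $Ch\|e\|_{L^2(\Omega)}^2$ (absorbable for small $h$), and nothing larger, which is exactly why the non-positive leading term $-\sum_\omega(Se(\omega))^2$ can be safely discarded rather than needing to balance against it.
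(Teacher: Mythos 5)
Your proposal takes a genuinely different route from the paper. The paper splits the error through the semi-discrete problem (\ref{eqn:controlprob3}): Lemma~\ref{lem:err1} bounds $\norm{u-u_\sigma}_{L^2(\Omega)}$ by $C\sqrt{\sigma}$ and Lemma~\ref{lem:err2} bounds $\norm{u_\sigma-u_{\sigma,h}}_{L^2(\Omega)}$ by $Ch$, whereas you compare $u$ and $u_{\sigma,h}$ in a single step. Your master inequality is correct, and your treatment of $T_2$ is sound: the expansion does produce the discardable term $-\sum_{\omega}(Se(\omega))^2$ plus remainders of size $Ch\norm{e}_{L^2(\Omega)}$ and $Ch\norm{e}_{L^2(\Omega)}^2$, which Young's inequality and absorption handle (the resulting smallness condition on $h$ is harmless, since for $h$ bounded below the estimate is trivial). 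The price of the one-step comparison is that the interpolation term $T_1$ now carries the fully discrete adjoint $p_h\in V_h$ rather than a continuous-level adjoint, and this is where the gap lies.

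Concretely: in the unconstrained case you need $\norm{p_h}_{L^q(\Omega)}\le C$ uniformly in $h$ for some $q>2$. The stability estimate (\ref{eqn:stabbb}) together with the Sobolev embedding gives this for $S^*\mu$, not for $S_h^*\mu$; the only discrete adjoint estimate available, (\ref{eqn:err2}), is an $L^2(\Omega)$ bound, and upgrading it to $L^q(\Omega)$ would require a further inverse-estimate or discrete $W^{1,q'}$-stability argument that you have not supplied. In the constrained case you instead invoke $u\in H^1(\Omega)$ so as to use Cauchy--Schwarz, but the paper only establishes $u\in W^{1,q'}(\Omega)$ for $q'<\frac{n}{n-1}=2$ (Corollary~\ref{cor:reg}), which stops short of $H^1(\Omega)$; your justification (that $p$ blows up logarithmically at each $\omega$, so $u$ is clamped near $I$ and $\nabla p\in L^2$ on the inactive set) is plausible but rests on unproven ingredients, namely Green's function asymptotics for the variable-coefficient operator, the degenerate case $Su(\omega)-g_\omega=0$ in which no singularity forms, and the claim that the inactive set stays uniformly away from the singular points. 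The paper's intermediate problem exists precisely to avoid both issues: its interpolation term is $(p_\sigma+\nu u_\sigma,\Pi_\sigma u-u)$ with $p_\sigma=S^*\lambda_{u_\sigma}$ a continuous-level object, so (\ref{eqn:stabbb}) plus $W^{1,q'}(\Omega)\hookrightarrow L^q(\Omega)$ (valid for $n=2$) yields the uniform $L^q$ bound, and H\"older with $\norm{\Pi_\sigma u-u}_{L^{q'}(\Omega)}\le C\sigma\norm{u}_{W^{1,q'}(\Omega)}$ delivers $T_1\le C\sigma$ without any $H^1$ regularity of $u$. You should either adopt that splitting or supply a uniform $L^q$ bound for $p_h$ (and a rigorous proof of $u\in H^1(\Omega)$ if you wish to keep the Cauchy--Schwarz route).
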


\begin{theorem} \label{thm:main22}
Assume $n=2$ or $3$. Let $u$ solve (\ref{eqn:optprobpoint}) and $u_h$ solve $(\mathrm{M}2_h)$ (see (\ref{eqn:controlprob2point2})). Then
\begin{equation*}
\norm{u-u_h}_{L^2(\Omega)} \leq C h^{{2-\frac{n}{2}}}
\end{equation*}
with $C$ independent of $\sigma$ and $h$.
\end{theorem}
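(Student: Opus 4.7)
}
The plan is to play off the continuous optimality condition (\ref{eqn:systema}) against the discrete one (\ref{eqn:system2apoint2}). Since $u_h \in U_{ad}$ and $u \in U_{ad}$ are both admissible in each other's variational inequality, testing (\ref{eqn:systema}) with $v=u_h$ and (\ref{eqn:system2apoint2}) with $v=u$ and adding gives the familiar bound
\begin{equation*}
\nu \norm{u-u_h}^2_{L^2(\Omega)} \leq (p-p_h, u_h - u).
\end{equation*}
So everything hinges on estimating the right-hand side in terms of $\norm{u-u_h}_{L^2(\Omega)}$ and a power of $h$.

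Next I would split $p-p_h$ according to its two sources of error: the discretisation of the adjoint operator and the perturbation of the right-hand side measure. With $\lambda_u$ as in (\ref{eqn:lamu}) and $\lambda_{h,u_h}$ as in (\ref{eqn:meassy}), we have $p=S^*\lambda_u$ and $p_h=S_h^*\lambda_{h,u_h}$, so
\begin{equation*}
(p-p_h, u_h-u) = \bigl((S^*-S_h^*)\lambda_u,\, u_h-u\bigr) + \bigl(S_h^*(\lambda_u-\lambda_{h,u_h}),\, u_h-u\bigr).
\end{equation*}
The first summand is straightforward: the estimate (\ref{eqn:err2}) together with the uniform bound $\norm{\lambda_u}_{\mathcal{M}(\Omega)} \leq \sum_{\omega \in I}|Su(\omega)-g_\omega| \leq C$ yields a contribution of order $h^{2-n/2}\norm{u-u_h}_{L^2(\Omega)}$.

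The second summand is the crux. Using the definition of the discrete adjoint, rewrite it as $\sum_{\omega \in I} \bigl(Su(\omega)-S_h u_h(\omega)\bigr)\, S_h(u_h-u)(\omega)$, and decompose $Su(\omega)-S_h u_h(\omega) = -S_h(u_h-u)(\omega) - (S_h-S)u(\omega)$. This produces the favourable negative term $-\sum_\omega [S_h(u_h-u)(\omega)]^2$, which we simply discard, plus an inner-product-type cross term bounded by
\begin{equation*}
\Bigl(\sum_{\omega\in I}\abs{(S-S_h)u(\omega)}\Bigr)\,\norm{S_h(u_h-u)}_\infty
\leq C h^{2-\frac{n}{2}}\norm{u}_{L^2(\Omega)}\,\norm{S_h(u_h-u)}_\infty,
\end{equation*}
where the factor $h^{2-n/2}$ comes from Corollary \ref{cor:prelim} applied at each point. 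To absorb $\norm{S_h(u_h-u)}_\infty$ I would write $S_h(u_h-u)=S(u_h-u)+(S_h-S)(u_h-u)$ and apply the embedding $H^2(\Omega)\hookrightarrow L^\infty(\Omega)$ together with (\ref{eqn:cts}) and (\ref{eqn:errinf}) to conclude $\norm{S_h(u_h-u)}_\infty \leq C\norm{u_h-u}_{L^2(\Omega)}$.

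Combining these estimates gives $\nu \norm{u-u_h}^2_{L^2(\Omega)} \leq C h^{2-n/2}\norm{u-u_h}_{L^2(\Omega)}$, from which the claim follows. The main obstacle is the bookkeeping in the second summand: one must engineer the algebraic decomposition of $Su(\omega)-S_hu_h(\omega)$ that produces a sign-definite square together with a cross term whose control requires only an $L^\infty$-error bound on $S-S_h$ at the points of $I$, rather than something stronger like an $L^\infty$ bound on $u-u_h$ (which is not available in dimension three).
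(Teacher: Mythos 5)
Your argument is correct and is essentially the paper's own proof: the paper establishes Theorem \ref{thm:main22} by repeating the calculation of Lemma \ref{lem:err2} with $U_{ad,\sigma}$ replaced by $U_{ad}$, i.e.\ combining the two variational inequalities, inserting an intermediate adjoint state to extract the discardable nonpositive term $-\sum_{\omega\in I}[S_h(u_h-u)(\omega)]^2$, and bounding the remainder by (\ref{eqn:err2}) and (\ref{eqn:errinf}). Your choice of intermediate ($S_h^*\lambda_u$ rather than $S_h^*\lambda_{h,u}$) and the algebraic decomposition of $Su(\omega)-S_hu_h(\omega)$ are only a cosmetic reshuffling of the same three estimates.
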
  

\begin{corollary}
\label{cor:thm12}
Assume $n=2$ or $3$, there are no active control constraints (e.g.\ $b=-a=\infty$), and that $V_h \subset U_\sigma$. Let $u$ solve (\ref{eqn:optprobpoint}) and $u_{\sigma,h}$ solve $(\mathrm{M}1_h)$ (see (\ref{eqn:controlprob2point})). Then
\begin{equation*}
\norm{u-u_{\sigma,h}}_{L^2(\Omega)} \leq C h^{{2-\frac{n}{2}}}
\end{equation*}
with $C$ independent of $\sigma$ and $h$.
\end{corollary}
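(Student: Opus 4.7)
The plan is to deduce Corollary~\ref{cor:thm12} from Theorem~\ref{thm:main22} by showing that, under the stated hypotheses, the solutions of $(\mathrm{M}1_h)$ and $(\mathrm{M}2_h)$ coincide. This is essentially the content of Remark~\ref{rem:discrep}, but I would prefer to give the short argument explicitly via the optimality systems rather than invoke it as a black box.

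First I would examine the optimality condition (\ref{eqn:system2apoint2}) for the solution $u_h$ of $(\mathrm{M}2_h)$. Writing it equivalently as the pointwise projection $u_h = \mathbb{P}_{[a,b]}(-p_h/\nu)$ and using the assumption that no control constraints are active (concretely, $b=-a=\infty$, so that $\mathbb{P}_{[a,b]}$ is the identity), we obtain $u_h = -\tfrac{1}{\nu} p_h$ a.e. in $\Omega$. Since $p_h \in V_h$, this forces $u_h \in V_h$, and the hypothesis $V_h \subset U_\sigma$ together with $U_{ad,\sigma} = U_\sigma$ (again because $b=-a=\infty$) places $u_h$ in the feasible set of $(\mathrm{M}1_h)$.

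Next I would check that the pair $(u_h,p_h)$ satisfies the optimality system (\ref{eqn:system2apoint})--(\ref{eqn:system2bpoint}) of $(\mathrm{M}1_h)$. The equation (\ref{eqn:system2bpoint}) is identical to (\ref{eqn:system2bpoint2}) once one replaces $u_{\sigma,h}$ by $u_h$, and the variational inequality (\ref{eqn:system2apoint}) holds trivially because $p_h + \nu u_h \equiv 0$. By the uniqueness part of the proposition characterising solutions of $(\mathrm{M}1_h)$, we conclude $u_{\sigma,h} = u_h$. Applying Theorem~\ref{thm:main22} then yields $\|u - u_{\sigma,h}\|_{L^2(\Omega)} = \|u - u_h\|_{L^2(\Omega)} \leq C h^{2-n/2}$, as claimed.

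I do not anticipate any real obstacle: the corollary is a direct repackaging of Theorem~\ref{thm:main22}, and the only point needing mild care is to ensure the equivalence $u_{\sigma,h} = u_h$ is derived purely from the discrete optimality systems and the structural fact that $u_h$ automatically inherits the piecewise linear structure of $p_h$ in the unconstrained case. The condition $V_h \subset U_\sigma$ is used in exactly one place, namely to guarantee that this $u_h \in V_h$ is admissible for $(\mathrm{M}1_h)$.
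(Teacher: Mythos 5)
Your argument is correct and is essentially the paper's own proof: the paper simply cites the equivalence of $(\mathrm{M}1_h)$ and $(\mathrm{M}2_h)$ from Remark~\ref{rem:discrep} and applies Theorem~\ref{thm:main22}, exactly as you do. Your explicit verification that $(u_h,p_h)$ with $u_h=-\tfrac{1}{\nu}p_h\in V_h\subset U_\sigma$ satisfies the optimality system of $(\mathrm{M}1_h)$ is just an unpacking of that remark, so there is nothing substantively different.
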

\begin{proof}
This result follows from the equivalence between $(\mathrm{M}1_h)$ and $(\mathrm{M}2_h)$ that is highlighted in Remark~\ref{rem:discrep}.
\end{proof}

Note that these results suggest $(\mathrm{M}2_h)$ is the preferred discretisation. In particular, we can only prove an error estimate in the case of $n=3$ with control constraints for $(\mathrm{M}2_h)$. Also the error estimate in the case of $n=2$ with control constraints is better for  $(\mathrm{M}2_h)$.

The idea of the approach is to consider the error caused by the discretisation of the control and state separately, then combine them. This approach only needs the weak supremum norm error estimate for the state equation (where as a stronger one is needed for Approach~2 in Section~\ref{sec:energy}), but it does require an error estimate for the adjoint of the control-to-state operator. An advantage of this approach is that it can give insight into the best choice of triangulations for the control and state, which are not necessarily the same.

To begin we split the error as
\begin{equation}
\label{eqn:error}
\norm{u-u_{\sigma,h}}_{L^2(\Omega)} \leq \norm{u-u_\sigma}_{L^2(\Omega)} + \norm{u_\sigma - u_{\sigma,h}}_{L^2(\Omega)},
\end{equation} 
where $u_\sigma$ solves the semi discrete control problem
\begin{equation}
\label{eqn:controlprob3} 
\min \hat{J}(\eta_\sigma) \text{ over } \eta_\sigma \in U_{ad,\sigma}.
\end{equation}
\begin{proposition}
\label{prop:tmp}
There is a unique solution $u_\sigma \in U_{ad,\sigma}$ to (\ref{eqn:controlprob3}). Moreover, $u_\sigma \in U_{ad,\sigma}$ is a solution of (\ref{eqn:controlprob3}) if and only if there exists a $p_\sigma \in L^2(\Omega)$ such that for all $q' \in (\frac{2n}{n+2}, \frac{n}{n-1} )$, $p_\sigma \in W_0^{1,q'}(\Omega)$ and
\begin{subequations}
\label{eqn:system3}
\begin{align}
&u \in U_{ad,\sigma} \quad (p_\sigma + \nu u_\sigma,v_\sigma-u_\sigma) \geq 0 \quad &&\forall v_\sigma \in U_{ad,\sigma} \label{eqn:system3a} \\
&a(v,p_\sigma) = \sum_{\omega \in I} (Su_\sigma(\omega)-g_\omega)v(\omega) \quad &&\forall v \in W_0^{1,q}(\Omega). \label{eqn:system3b} 
\end{align}
\end{subequations}
Note that $p_\sigma$ is not a discrete function. The subscript $\sigma$ is to denote association with the discrete control $u_\sigma$.
\end{proposition}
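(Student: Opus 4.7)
The plan is to observe that this proposition is structurally identical to Theorems~\ref{thm:exist} and \ref{thm:optpoint} with the admissible set $U_{ad}$ replaced by its subset $U_{ad,\sigma}$, and that the state and adjoint equations remain at the continuous level. No new analytic ingredients are required; I would therefore simply adapt those two proofs.

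For the existence and uniqueness half, I would verify the standard direct-method hypotheses: $U_{ad,\sigma}$ is a nonempty, closed, convex subset of $L^2(\Omega)$ (nonempty since the Lagrange interpolation of any constant in $[a,b]$ lies in it, or trivially in the unconstrained case $b=-a=\infty$); the reduced functional $\hat{J}$ is continuous on $L^2(\Omega)$ thanks to the embedding $S:L^2(\Omega)\to C_0(\Omega)$ provided by (\ref{eqn:cts}); it is strictly convex because $\nu>0$; and it is coercive on $L^2(\Omega)$. Strict convexity yields uniqueness and weak lower semicontinuity together with coercivity yields existence, exactly as cited for Theorem~\ref{thm:exist} from \cite{Troltzsch2005}.

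For the optimality characterization, I would invoke the same convex variational inequality principle (Lemma~2.21 in \cite{Troltzsch2005}): since $\hat{J}$ is convex and G\^ateaux differentiable on $L^2(\Omega)$ and $U_{ad,\sigma}$ is a convex subset of $L^2(\Omega)$, $u_\sigma \in U_{ad,\sigma}$ minimises $\hat{J}$ over $U_{ad,\sigma}$ iff $\langle \hat{J}'(u_\sigma),v_\sigma-u_\sigma\rangle \geq 0$ for all $v_\sigma \in U_{ad,\sigma}$, which is (\ref{eqn:system3a}) once $p_\sigma$ is identified. To identify $p_\sigma$, I would copy verbatim the measure-theoretic construction from Theorem~\ref{thm:optpoint}: fix a smooth extension $g_d \in C^\infty(\bar{\Omega})$ with $g_d(\omega)=g_\omega$, set $\mu=\sum_{\omega\in I}\delta_\omega \in \mathcal{M}(\Omega)$, define the regular real Borel measure $\lambda_{u_\sigma}(A)=\int_A (Su_\sigma-g_d)\,\rd\mu$ as in (\ref{eqn:lamu}), and compute the derivative term
\[
\int_\Omega (Su_\sigma-g_d)\,S(v_\sigma-u_\sigma)\,\rd\mu = (S^*\lambda_{u_\sigma},v_\sigma-u_\sigma).
\]
Setting $p_\sigma := S^*\lambda_{u_\sigma} \in L^2(\Omega)$ and applying Lemma~\ref{lem:adj} gives both the regularity $p_\sigma \in W_0^{1,q'}(\Omega)$ for every $q'\in(\tfrac{2n}{n+2},\tfrac{n}{n-1})$ and the adjoint identity (\ref{eqn:system3b}) after noting that $\int_\Omega v\,\rd\lambda_{u_\sigma}=\sum_{\omega\in I}(Su_\sigma(\omega)-g_\omega)v(\omega)$.

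There is no real obstacle: the only point worth emphasizing is that discretizing the control space does not touch the state or adjoint equations, so (\ref{eqn:system3b}) still tests against $v\in W_0^{1,q}(\Omega)$ and $p_\sigma$ is a continuous (non-discrete) function, despite the subscript $\sigma$ inherited from $u_\sigma$. Accordingly I would present the proof as a short remark that the arguments of Theorems~\ref{thm:exist} and \ref{thm:optpoint} apply verbatim with $U_{ad}$ replaced by $U_{ad,\sigma}$, referring back to them for the details.
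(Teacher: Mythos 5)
Your proposal is correct and follows exactly the paper's own route: the paper's proof likewise just notes that $U_{ad,\sigma}$ remains a closed convex subset of $L^2(\Omega)$ so the arguments of Theorems~\ref{thm:exist} and \ref{thm:optpoint} apply, with $p_\sigma = S^*\lambda_{u_\sigma}$ for the measure $\lambda_{u_\sigma}$ defined analogously to (\ref{eqn:lamu}). Your write-up simply spells out the same details more explicitly.
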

\begin{proof}
$U_{ad,\sigma}$ is still a closed convex subset of $L^2(\Omega)$ so the proofs in Theorems \ref{thm:exist} and \ref{thm:optpoint} apply. Note that $p_\sigma= S^* \lambda_{u_\sigma}$ where $\lambda_{u_\sigma} \in \mathcal{M}(\Omega)$ is defined analogously to (\ref{eqn:lamu}) by
\begin{equation} \label{eqn:lamusig}
\lambda_{u_\sigma}(A) := \int_A (Su_\sigma-g_d) \rd \mu \quad \forall A \in \mathcal{B}.
\end{equation}
\end{proof}

Whereas (\ref{eqn:controlprob2point2}) minimises the discrete reduced objective functional over the continuous space, this problem minimises the continuous reduced objective functional over the discrete space. So the solution of (\ref{eqn:controlprob3}) is discrete, but the corresponding state is continuous, and this problem cannot be solved computationally.

The first term on the right hand side of (\ref{eqn:error}) can be thought of as the error from the discretisation of the control, as we are comparing the minimiser of the continuous objective functional over continuous and discrete controls. Similarly the second term on the right hand side of (\ref{eqn:error}) can be thought of as the error from the discretisation of the state, as we compare the minimiser of the continuous and discrete objective functionals, both over discrete controls. To prove Theorem~\ref{thm:main} it is sufficient to prove an error estimate for each term separately, which we do in Lemmas~\ref{lem:err1} and \ref{lem:err2}. Note that we have additional assumptions in Theorem~\ref{thm:main} because we need these in order to prove Lemma~\ref{lem:err1}. But first we will prove some a priori estimates for the solution of (\ref{eqn:controlprob3}).

\begin{lemma}
\label{lem:bounds}
Let $u_\sigma$ solve (\ref{eqn:controlprob3}) and $p_\sigma$ satisfy the optimality system (\ref{eqn:system3}). For all $q' \in (\frac{2n}{n+2}, \frac{n}{n-1} )$, 
\begin{equation}
\label{est:first}
\norm{u_\sigma}_{L^2(\Omega)} + \norm{Su_\sigma}_{L^2(\Omega)} + \norm{p_\sigma}_{W_0^{1,q'}(\Omega)} \leq C(q')
\end{equation}
with $C$ independent of $\sigma$.
Moreover, when $n=2$ there exists some $q>n$ such that
\begin{equation}
\label{est:second}
\norm{u_\sigma}_{L^q(\Omega)} + \norm{p_\sigma}_{L^q(\Omega)} \leq C
\end{equation}
with $C$ independent of $\sigma$.
\end{lemma}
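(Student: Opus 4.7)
My plan is to derive (\ref{est:first}) by propagating an $L^2$ control bound, obtained from optimality, through the continuous state map and the adjoint representation of $p_\sigma$, and to derive (\ref{est:second}) by combining Sobolev embedding for $p_\sigma$ with either an $L^\infty$ bound for $u_\sigma$ (constrained case) or $L^q$-stability of the $L^2$-projection (unconstrained case).

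For (\ref{est:first}), I choose a constant function $c$ with $c \in [a,b]$, taking $c=0$ when $b=-a=\infty$. Constants belong to $U_\sigma$ for every $\sigma$, so $c \in U_{ad,\sigma}$, and the optimality of $u_\sigma$ gives $\hat{J}(u_\sigma) \leq \hat{J}(c) \leq C$ with $C$ independent of $\sigma$. Since $\hat{J}(u_\sigma) \geq \frac{\nu}{2}\norm{u_\sigma}_{L^2(\Omega)}^2$, the $L^2$ control bound follows. The continuity estimate (\ref{eqn:cts}) then yields $\norm{Su_\sigma}_\infty \leq C$ and hence $\norm{Su_\sigma}_{L^2(\Omega)} \leq C$. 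From the representation $p_\sigma = S^*\lambda_{u_\sigma}$ in Proposition~\ref{prop:tmp}, the total variation bound
\[
\norm{\lambda_{u_\sigma}}_{\mathcal{M}(\Omega)} = \sum_{\omega \in I}\abs{Su_\sigma(\omega)-g_\omega} \leq C
\]
holds uniformly in $\sigma$, and the adjoint stability (\ref{eqn:stabbb}) provides the required $W_0^{1,q'}(\Omega)$ bound on $p_\sigma$.

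For (\ref{est:second}) with $n=2$, the admissible range of (\ref{eqn:stabbb}) is $q' \in (1,2)$. Fix any $q' \in (1,2)$ sufficiently close to $2$; then Sobolev embedding gives $W^{1,q'}(\Omega) \hookrightarrow L^q(\Omega)$ with $q = \frac{2q'}{2-q'} > 2 = n$, so (\ref{est:first}) yields $\norm{p_\sigma}_{L^q(\Omega)} \leq C$. The bound on $u_\sigma$ splits by case. If $a,b$ are finite, every $u_\sigma \in U_{ad,\sigma}$ is piecewise linear with nodal values in $[a,b]$, so $\norm{u_\sigma}_{L^\infty(\Omega)} \leq \max(\abs{a},\abs{b})$, which makes the $L^q$ bound immediate. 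If $b=-a=\infty$, the variational inequality (\ref{eqn:system3a}) over the linear space $U_\sigma$ reduces to the orthogonality $(\nu u_\sigma + p_\sigma, v_\sigma) = 0$ for all $v_\sigma \in U_\sigma$, so $\nu u_\sigma = -P_\sigma p_\sigma$ where $P_\sigma$ is the $L^2$-projection onto $U_\sigma$. The standard $L^q$-stability of this projection on quasi-uniform piecewise linear finite element spaces then transfers the $L^q$ bound from $p_\sigma$ to $u_\sigma$.

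The main obstacle is isolating a constant that is genuinely independent of $\sigma$ in the unconstrained case: the $L^q$-stability of the $L^2$-projection on quasi-uniform meshes is classical but must be cited rather than reproved, and one should verify that the embedding constants in Sobolev's theorem do not depend on $\sigma$ either (they depend only on $\Omega$ and $q'$, so this is clear). All other steps reduce to continuity of $S$ from (\ref{eqn:cts}), the adjoint stability (\ref{eqn:stabbb}), and the feasibility of a fixed constant function in $U_{ad,\sigma}$.
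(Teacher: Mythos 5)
Your proof is correct and follows essentially the same route as the paper's: bound $\norm{u_\sigma}_{L^2(\Omega)}$ uniformly via optimality (the paper uses the direct bound $\abs{\Omega}^{1/2}\max(\abs{a},\abs{b})$ in the constrained case and tests with $0$ in the unconstrained case, where your constant test function unifies the two), propagate through (\ref{eqn:cts}) and the adjoint stability (\ref{eqn:stabbb}) via the total variation of $\lambda_{u_\sigma}$, and for $n=2$ combine the Sobolev embedding of $W_0^{1,q'}(\Omega)$ into some $L^q(\Omega)$ with $q>2$ with either the pointwise constraint bound or the $L^q$-stability of the $L^2$-projection (the paper cites Crouzeix--Thom\'ee for the latter). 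The only cosmetic difference is your choice of embedding exponent ("$q'$ close to $2$" is not needed; the paper fixes $q'=\frac{4}{3}$, and any $q'\in(1,2)$ works).
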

\begin{proof}
Using (\ref{eqn:stabbb}), (\ref{eqn:lamusig}) and (\ref{eqn:cts}), for all $q' \in (\frac{2n}{n+2}, \frac{n}{n-1} )$ we have
\begin{align}
\norm{p_\sigma}_{W_0^{1,q'}(\Omega)} &\leq C(q') \norm{\lambda_{u_\sigma}}_{\mathcal{M}(\Omega)} \nonumber \\
&= C(q') \sum_{\omega \in I} \abs{Su_\sigma(\omega) - g_d} \nonumber \\
&\leq  C(q') (\norm{Su_\sigma}_\infty + \max_{\omega \in I} \abs{g_\omega} ) \nonumber \\
&\leq C(q') ( \norm{u_\sigma}_{L^2(\Omega}) + 1). \label{eqn:pbound}
\end{align}
Combining this with (\ref{eqn:cts}) again we get
\begin{equation}
\label{est:firsttt}
\norm{u_\sigma}_{L^2(\Omega)} + \norm{Su_\sigma}_{L^2(\Omega)} + \norm{p_\sigma}_{W_0^{1,q'}(\Omega)} \leq C(q')( \norm{u_\sigma}_{L^2(\Omega)} + 1).
\end{equation}
If $a,b \in \R$ then
\begin{align*}
\norm{u_\sigma}_{L^2(\Omega)} \leq \abs{\Omega}^{\frac{1}{2}} \max (\abs{a},\abs{b}).
\end{align*}
If $b=-a=\infty$ then $0 \in U_{ad,\sigma}$, so $\hat{J}(u_\sigma) \leq \hat{J}(0)$. Since $S 0 = 0$, this means
\begin{equation}
\label{eqn:fghj}
\frac{\nu}{2} \norm{u_\sigma}^2_{L^2(\Omega)} \leq \frac{1}{2} \sum_{\omega \in I} g_\omega^2.
\end{equation}
So regardless of the assumptions on $a$ and $b$, we have $\norm{u_\sigma}_{L^2(\Omega)} \leq C$. Combining this with (\ref{est:firsttt}) gives the first bound (\ref{est:first}).

For the second bound we assume $n=2$. If $a,b \in \R$ then  we have
\[
\norm{u_\sigma}_{L^q(\Omega)} \leq \abs{\Omega}^{\frac{1}{q}} \max (\abs{a},\abs{b}).
\]
If $b=-a=\infty$ we can use the $L^q(\Omega)$ stability of the $L^2(\Omega)$ projection (see e.g.\ \cite{Crouzeix1987}) to get $\norm{u_\sigma}_{L^q(\Omega)} \leq \frac{1}{\nu}\norm{p_\sigma}_{L^q(\Omega)}$. So for all $q>2$,
\[
\norm{u_\sigma}_{L^q(\Omega)} + \norm{p_\sigma}_{L^q(\Omega)} \leq C( \norm{p_\sigma}_{L^q(\Omega)} + 1).
\]
We now need some $q>2$ such that $\norm{p_\sigma}_{L^q(\Omega)} \leq C$ independently of $\sigma$. By Sobolev embedding results, if $s> \frac{n}{2}$ then $W^{1,s}(\Omega) \hookrightarrow L^t(\Omega)$ for some $t>n$. In particular for $n=2$ we can take $s=\frac{4}{3}>\frac{n}{2}=1$, since $p_\sigma \in W_0^{1,s}(\Omega)$ for $s \in (\frac{2n}{n+2}, \frac{n}{n-1} )=(1,2)$. Then for some $q>2$,
\[
\norm{p_\sigma}_{L^q(\Omega)} \leq C \norm{p_\sigma}_{W_0^{1,\frac{4}{3}}(\Omega)} \leq C,
\]
where we have used (\ref{est:first}) for the final inequality. Note that for $n=3$ we would require $s>\frac{n}{2} = \frac{3}{2}$, but for such an $s$ we do not have $p_\sigma \in W_0^{1,s}(\Omega)$, which only holds when $s \in (\frac{2n}{n+2}, \frac{n}{n-1} )=(\frac{5}{4},\frac{3}{2})$.
\end{proof}

\begin{lemma}[Error from discretisation of the control]
\label{lem:err1}
Assume $n=2$. Let $u$ and $u_\sigma$ be solutions of (\ref{eqn:optprobpoint}) and (\ref{eqn:controlprob3}) respectively. Then
\[
\norm{u-u_\sigma}_{L^2(\Omega)} \leq C \sqrt{\sigma}
\]
with $C$ independent of $\sigma$ (and $h$).
\end{lemma}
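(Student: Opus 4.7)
The plan is to mimic the standard variational argument from \cite{Casas2003} but carried out with the low-regularity adjoint living only in $W^{1,q'}_0(\Omega)$ for $q' \in (1, \tfrac{n}{n-1})$. I would begin by testing the continuous optimality VI (\ref{eqn:systema}) with $v = u_\sigma$, which is admissible since $U_{ad,\sigma} \subset U_{ad}$, and testing the semi-discrete VI (\ref{eqn:system3a}) with $v_\sigma = \Pi_\sigma u$, which lies in $U_{ad,\sigma}$ thanks to the property of the Scott--Zhang type operator $\Pi_\sigma$ recalled right after (\ref{eqn:approx}). Adding the two inequalities and rearranging the $\nu$-terms around the square $\nu\norm{u-u_\sigma}_{L^2(\Omega)}^2$ yields, after a short algebraic manipulation, the bound
\begin{equation*}
\nu\norm{u-u_\sigma}_{L^2(\Omega)}^2 \leq -(p_\sigma - p, u_\sigma - u) + (p_\sigma + \nu u_\sigma, \Pi_\sigma u - u).
\end{equation*}

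The next step is to show that the first term on the right is nonpositive. For this I would note that, by (\ref{eqn:systemb}) and (\ref{eqn:system3b}), $p_\sigma - p$ solves
\begin{equation*}
a(v, p_\sigma - p) = \sum_{\omega \in I} S(u_\sigma - u)(\omega)\, v(\omega) \quad \forall v \in W_0^{1,q}(\Omega).
\end{equation*}
Taking $v = S(u_\sigma - u) \in H^2(\Omega) \cap H_0^1(\Omega) \subset W_0^{1,q}(\Omega)$ and then using the alternative characterisation of $S$ from Lemma~\ref{lem:statealt} with $\eta = u_\sigma - u \in L^2(\Omega)$ and test function $p_\sigma - p \in W_0^{1,q'}(\Omega)$ (this is precisely the reason we need Lemma~\ref{lem:statealt}), one obtains
\begin{equation*}
(p_\sigma - p, u_\sigma - u) = \sum_{\omega \in I} \bigl(S(u_\sigma - u)(\omega)\bigr)^2 \geq 0,
\end{equation*}
so $-(p_\sigma - p, u_\sigma - u) \leq 0$ and the estimate collapses to $\nu\norm{u-u_\sigma}_{L^2(\Omega)}^2 \leq (p_\sigma + \nu u_\sigma, \Pi_\sigma u - u)$.

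It remains to bound $(p_\sigma + \nu u_\sigma, \Pi_\sigma u - u)$ by $C\sigma$. By H\"older's inequality I would pair $p_\sigma + \nu u_\sigma$ in $L^q(\Omega)$ against $\Pi_\sigma u - u$ in $L^{q'}(\Omega)$ for a matching conjugate pair. Here the key is to choose some $q>n=2$ with $q' \in (1,2) = (\tfrac{2n}{n+2}, \tfrac{n}{n-1})$: the bound (\ref{est:second}) from Lemma~\ref{lem:bounds} gives $\norm{p_\sigma}_{L^q(\Omega)} + \norm{u_\sigma}_{L^q(\Omega)} \leq C$ uniformly in $\sigma$, and Corollary~\ref{cor:reg} gives $u \in W^{1,q'}(\Omega)$, whence the approximation property (\ref{eqn:approx}) with $m=0$, $p=q'$ yields
\begin{equation*}
\norm{\Pi_\sigma u - u}_{L^{q'}(\Omega)} \leq C\sigma\norm{u}_{W^{1,q'}(\Omega)} \leq C\sigma.
\end{equation*}
Combining everything produces $\nu\norm{u-u_\sigma}_{L^2(\Omega)}^2 \leq C\sigma$, which after taking a square root yields the claimed $\sqrt{\sigma}$ rate.

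The main obstacle, and the reason the lemma is restricted to $n=2$, is step three: we need an $L^q$ bound on $p_\sigma$ for some $q>2$ that is uniform in $\sigma$. This is exactly the content of (\ref{est:second}), and as remarked at the end of the proof of Lemma~\ref{lem:bounds}, the Sobolev embedding $W^{1,s}(\Omega) \hookrightarrow L^q(\Omega)$ with $q>n$ fails in dimension three for the $s$-range $(\tfrac{2n}{n+2}, \tfrac{n}{n-1})$ in which $p_\sigma$ lives. All the other ingredients (admissibility of $\Pi_\sigma u$, the duality in step two, the Hölder pairing) go through dimension-independently, so the proof is structured around pushing the regularity afforded by Lemma~\ref{lem:adj} and Corollary~\ref{cor:reg} as far as possible.
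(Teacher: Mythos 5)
Your proposal is correct and follows essentially the same route as the paper: the same choice of test functions $v=u_\sigma$ and $v_\sigma=\Pi_\sigma u$, the same rearrangement, the same sign argument for the cross term, and the same H\"older pairing using (\ref{est:second}) and (\ref{eqn:approx}). The only cosmetic difference is that you establish $(p_\sigma-p,u_\sigma-u)=\sum_{\omega\in I}\bigl(S(u_\sigma-u)(\omega)\bigr)^2\geq 0$ by direct duality through Lemma~\ref{lem:statealt}, whereas the paper phrases the identical computation via the adjoint $S^*$ acting on the measures $\lambda_u$ and $\lambda_{u_\sigma}$.
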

\begin{proof}
Test with $v=u_\sigma$ in (\ref{eqn:systema}) to get
\[
(p + \nu u,u_\sigma-u) \geq 0.
\]
Test with $v_\sigma=\Pi_\sigma u$ in (\ref{eqn:system3a}) to get
\[
(p_\sigma + \nu u_\sigma,\Pi_\sigma u-u_\sigma) = (p_\sigma + \nu u_\sigma,\Pi_\sigma u - u) + (p_\sigma + \nu u_\sigma,u-u_\sigma) \geq 0.
\]
Adding these two inequalities and rearranging we get
\begin{equation}
\label{eqn:mainineq}
\nu \norm{u-u_\sigma}^2_{L^2(\Omega)} + (p_\sigma-p,u_\sigma-u) \leq (p_\sigma + \nu u_\sigma ,\Pi_\sigma u-u).
\end{equation}
Recall from the proof of Theorem~\ref{thm:optpoint} that $p = S^* \lambda_{u}$ with $\lambda_u$ defined by (\ref{eqn:lamu}). Similarly during the proof of Proposition~\ref{prop:tmp} we find that $p_\sigma = S^* \lambda_{u_\sigma}$ with $\lambda_{u_\sigma}$ defined by (\ref{eqn:lamusig}). So using this and Theorem~1.29 in \cite{Rudin} (see e.g.\ (\ref{eqn:measrel})) we get\begin{align*}
  (p_\sigma- p,u_\sigma-u) =& (S^*\lambda_{u_\sigma} - S^*\lambda_u, u_\sigma - u) = \langle \lambda_{u_\sigma} - \lambda_u, S(u_\sigma-u) \rangle_{\mathcal{M}(\Omega)}  \\=&  \int_\Omega (S(u-u_\sigma))^2 \rd \mu \geq 0.
\end{align*}
This means the  second term on the left hand side of (\ref{eqn:mainineq}) can be dropped. 

We now bound the right hand side of (\ref{eqn:mainineq}). By Lemma~\ref{lem:bounds}, for $n=2$ there exists some $q>n$ such that $\norm{u_\sigma}_{L^q(\Omega)}$ and $\norm{p_\sigma}_{L^q(\Omega)}$ are bounded independently of $\sigma$. So using H\"older's inequality with this $q$ we get
\begin{align*}
(p_\sigma + \nu u_\sigma, \Pi_\sigma u - u) &
\leq \norm{ p_\sigma + \nu u_\sigma }_{L^q(\Omega)} \norm{ \Pi_\sigma u - u }_{L^{q'}(\Omega)} \\ 
&\leq (\norm{ p_\sigma}_{L^q(\Omega)} + \nu \norm{u_\sigma }_{L^q(\Omega)}) \norm{ \Pi_\sigma u - u }_{L^{q'}(\Omega)} \\
&\leq C \norm{ \Pi_\sigma u - u }_{L^{q'}(\Omega)},
\end{align*}
with $C$ independent of $\sigma$. Now (\ref{eqn:approx}) gives
\[
\norm{ \Pi_\sigma u - u }_{L^{q'}(\Omega)} \leq C \sigma  \norm{ u }_{W^{1,q'}(\Omega)} \leq C \sigma,
\]
so we can deduce that
\[
(p_\sigma + \nu u_\sigma, \Pi_\sigma u - u) \leq C \sigma.
\]
Recall from Lemma~\ref{lem:bounds} that $\norm{u_\sigma}_{L^q(\Omega)}$ and $\norm{p_\sigma}_{L^q(\Omega)}$ are not bounded independently of $\sigma$ for $n=3$, so the above proof does not work in that case.
\end{proof}

\begin{lemma}[Error from discretisation of the state]
\label{lem:err2}
Assume $n=2$ or $3$. Let $u_\sigma$ and $u_{\sigma,h}$ be the solutions of (\ref{eqn:controlprob3}) and $(\mathrm{M}1_h)$ (see (\ref{eqn:controlprob2point})) respectively. Then
\[
\norm{u_\sigma - u_{\sigma,h}}_{L^2(\Omega)} \leq C h^{2-\frac{n}{2}}
\]
with $C$ independent of $\sigma$ and $h$.
\end{lemma}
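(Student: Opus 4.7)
The plan is to mirror the proof of Lemma~\ref{lem:err1}, but now the two controls $u_\sigma$ and $u_{\sigma,h}$ both live in the same admissible set $U_{ad,\sigma}$, so no interpolation is needed in the ``Céa-style'' step. Testing (\ref{eqn:system3a}) with $v_\sigma = u_{\sigma,h}$ and (\ref{eqn:system2apoint}) with $v_\sigma = u_\sigma$ and adding gives directly
\[
\nu \norm{u_\sigma - u_{\sigma,h}}^2_{L^2(\Omega)} \leq (p_\sigma - p_h, u_{\sigma,h} - u_\sigma).
\]
The proof then reduces entirely to estimating this adjoint-driven right-hand side.

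Next I would use the representations $p_\sigma = S^* \lambda_{u_\sigma}$ (from Proposition~\ref{prop:tmp}) and $p_h = S_h^* \lambda_{h, u_{\sigma,h}}$ (from (\ref{eqn:meassy})) and split
\[
p_\sigma - p_h = (S^* - S_h^*)\lambda_{u_\sigma} + S_h^* \bigl(\lambda_{u_\sigma} - \lambda_{h, u_{\sigma,h}} \bigr).
\]
For the first piece I apply (\ref{eqn:err2}) to get a bound $Ch^{2-n/2} \norm{\lambda_{u_\sigma}}_{\mathcal{M}(\Omega)} \norm{u_{\sigma,h} - u_\sigma}_{L^2(\Omega)}$, and the total variation $\norm{\lambda_{u_\sigma}}_{\mathcal{M}(\Omega)} = \sum_{\omega \in I} \abs{Su_\sigma(\omega) - g_\omega}$ is uniformly bounded in $\sigma$ using (\ref{eqn:cts}) together with the uniform bound $\norm{u_\sigma}_{L^2(\Omega)} \leq C$ from Lemma~\ref{lem:bounds} (which holds in both $n=2$ and $n=3$).

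For the second piece I pass to the adjoint side: using the analogue of (\ref{eqn:measrel}) for both measures and recalling that $\lambda_{u_\sigma} - \lambda_{h,u_{\sigma,h}}$ has density $Su_\sigma - S_h u_{\sigma,h}$ with respect to $\mu = \sum_{\omega \in I}\delta_\omega$,
\[
\bigl(S_h^*(\lambda_{u_\sigma} - \lambda_{h,u_{\sigma,h}}), u_{\sigma,h} - u_\sigma\bigr) = \sum_{\omega \in I} S_h(u_{\sigma,h}-u_\sigma)(\omega)\,\bigl(Su_\sigma - S_h u_{\sigma,h}\bigr)(\omega).
\]
The decomposition $Su_\sigma - S_h u_{\sigma,h} = (S-S_h)u_\sigma - S_h(u_{\sigma,h}-u_\sigma)$ turns the right-hand side into a pointwise error term plus $-\sum_{\omega}[S_h(u_{\sigma,h}-u_\sigma)(\omega)]^2$, which is nonpositive and can be dropped. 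The remaining pointwise term is then controlled by $\abs{I}\,\norm{S_h(u_{\sigma,h}-u_\sigma)}_\infty \,\norm{(S-S_h)u_\sigma}_\infty$; the first factor is $\leq C \norm{u_{\sigma,h}-u_\sigma}_{L^2(\Omega)}$ (combine (\ref{eqn:cts}) with (\ref{eqn:errinf}) to bound $\norm{S_h w}_\infty$ uniformly) and the second is $\leq C h^{2-n/2}$ by Corollary~\ref{cor:prelim} together with the uniform $L^2$ bound on $u_\sigma$.

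Collecting the two pieces gives $(p_\sigma - p_h, u_{\sigma,h} - u_\sigma) \leq C h^{2-n/2} \norm{u_\sigma - u_{\sigma,h}}_{L^2(\Omega)}$, and dividing through by $\norm{u_\sigma - u_{\sigma,h}}_{L^2(\Omega)}$ closes the argument. The only delicate point is handling the ``crossed'' term $S_h^*(\lambda_{u_\sigma} - \lambda_{h,u_{\sigma,h}})$: the key observation is that after expanding $Su_\sigma - S_h u_{\sigma,h}$ one extracts a nonpositive quadratic contribution at the points of $I$ which is discarded for free, leaving a clean product of a consistency error of order $h^{2-n/2}$ with an $L^2$ factor one can absorb.
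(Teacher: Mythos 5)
Your argument is correct and rests on the same pillars as the paper's proof: testing the two variational inequalities against each other's solutions (legitimate since both $u_\sigma$ and $u_{\sigma,h}$ lie in $U_{ad,\sigma}$), splitting off a sign-definite quadratic term $-\sum_{\omega}[S_h(u_{\sigma,h}-u_\sigma)(\omega)]^2$ that can be discarded, invoking the adjoint consistency estimate (\ref{eqn:err2}), the supremum-norm state estimate (\ref{eqn:errinf}), and the uniform bounds of Lemma~\ref{lem:bounds}. The only real difference is the choice of intermediate quantity in the triangle inequality. The paper inserts $S_h^*\lambda_{h,u_\sigma}$ and obtains a three-way split in which the quadratic term appears immediately at the level of the adjoint operators, and the remaining "crossed" contribution is handled entirely through measure norms, via $\norm{\lambda_{h,u_\sigma}-\lambda_{u_\sigma}}_{\mathcal{M}(\Omega)} \le C\norm{(S-S_h)u_\sigma}_\infty$. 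You instead insert $S_h^*\lambda_{u_\sigma}$, apply (\ref{eqn:err2}) to the continuous measure $\lambda_{u_\sigma}$ rather than the discrete one, and then unpack the remaining term pointwise at the nodes of $I$, where the decomposition $Su_\sigma - S_hu_{\sigma,h} = (S-S_h)u_\sigma - S_h(u_{\sigma,h}-u_\sigma)$ produces the discarded quadratic plus a consistency term bounded by $\abs{I}\,\norm{S_h(u_{\sigma,h}-u_\sigma)}_\infty\norm{(S-S_h)u_\sigma}_\infty$. This costs you one extra auxiliary fact, the uniform stability $\norm{S_h w}_\infty \le C\norm{w}_{L^2(\Omega)}$, which does follow from (\ref{eqn:cts}) and (\ref{eqn:errinf}) exactly as you say (and is implicitly used by the paper in (\ref{eqn:mbound}) anyway); in exchange your version avoids estimating the total variation of a difference of measures. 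Both routes yield the same $O(h^{2-\frac{n}{2}})$ bound with constants independent of $\sigma$ and $h$, and, as the paper notes after the lemma, either version transfers verbatim to Theorem~\ref{thm:main22} upon replacing $U_{ad,\sigma}$ by $U_{ad}$.
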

\begin{proof}
Testing (\ref{eqn:system2apoint}) with $v_\sigma=u_{\sigma}$ gives
\[
(p_h + \nu u_{\sigma,h},u_\sigma-u_{\sigma,h}) \geq 0.
\]
Testing (\ref{eqn:system3a}) with $v_h=u_{\sigma,h}$ gives
\[
(p_\sigma + \nu u_{\sigma},u_{\sigma,h}-u_{\sigma}) \geq 0.
\]
Adding these two inequalities, using that $p_h=S_h^*\lambda_{h, u_{\sigma,h}}$ and $p_\sigma = S^* \lambda_{u_\sigma}$, and introducing $S_h^* \lambda_{h,u_{\sigma}}$ (see (\ref{eqn:meassy})) we get
\begin{align}
\nu \norm{u_\sigma - u_{\sigma,h}}^2_{L^2(\Omega)} \nonumber \leq& (p_h-p_{\sigma},u_\sigma-u_{\sigma,h}) \nonumber \\
=& (S_h^* \lambda_{u_{\sigma,h}}-S^* \lambda_{u_\sigma},u_\sigma-u_{\sigma,h}) \nonumber \\
\leq& (S_h^* \lambda_{h,u_{\sigma,h}}-S_h^*\lambda_{h,u_\sigma},u_\sigma-u_{\sigma,h}) \nonumber \\
&+ (S_h^*\lambda_{h,u_\sigma}-S^* \lambda_{u_\sigma},u_\sigma-u_{\sigma,h}). \label{eqn:splitted}
\end{align}
Note that
\begin{align*}
(S_h^* \lambda_{h,u_{\sigma,h}}-S_h^*\lambda_{h,u_\sigma},u_\sigma-u_{\sigma,h}) =& \langle \lambda_{h,u_{\sigma,h}}-\lambda_{h,u_\sigma}, S_h (u_\sigma - u_{\sigma,h}) \rangle_{\mathcal{M}(\Omega)} \\
 =& -\int_\Omega (S_h(u_{\sigma,h}-u_\sigma))^2 \rd \mu \leq 0.
\end{align*}
So the first term on the right hand side of (\ref{eqn:splitted}) can be dropped. Also note that
\begin{align*}
(S_h^* \lambda_{h,u_\sigma} - S^* \lambda_{u_\sigma} , u_\sigma-u_{\sigma,h})=& (S_h^*\lambda_{h,u_\sigma}-S^*\lambda_{h,u_\sigma},u_\sigma-u_{\sigma,h}) +(S^*\lambda_{h,u_\sigma}-S^*\lambda_{u_\sigma}, u_\sigma-u_{\sigma,h}),
\end{align*}
and we can bound both terms on the right hand side of this. Using (\ref{eqn:err2}) and
\begin{equation}
\label{eqn:mbound}
\norm{\lambda_{h,u_\sigma}}_{\mathcal{M}(\Omega)} = \sum_{\omega \in I} \abs{S_hu_\sigma(\omega)-g_\omega} \leq C \norm{S_hu_\sigma}_\infty + \max_{\omega \in I} \abs{g_\omega} \leq C( \norm{u_\sigma}_{L^2(\Omega)} +1) \leq C,
\end{equation}
we get
\begin{align}
(S_h^*\lambda_{h,u_\sigma}-S^*\lambda_{h,u_\sigma},u_\sigma-u_{\sigma,h}) &\leq C \norm{S_h^*\lambda_{h,u_\sigma}-S^*\lambda_{h,u_\sigma}}_{L^2(\Omega)} \norm{ u_\sigma-u_{\sigma,h} }_{L^2(\Omega)} \nonumber \\
&\leq C h^{2-\frac{n}{2}} \norm{ \lambda_{h,u_\sigma} }_{\mathcal{M}(\Omega)} \norm{ u_\sigma-u_{\sigma,h} }_{L^2(\Omega)} \nonumber \\
&\leq C h^{2-\frac{n}{2}} \norm{ u_\sigma-u_{\sigma,h} }_{L^2(\Omega)}  \label{eqn:split1}
\end{align}
with $C$ independent of $\sigma$ and $h$. By (\ref{eqn:errinf}) we have 
\begin{align}
(S^*\lambda_{h,u_\sigma}-S^*\lambda_{u_\sigma}, u_\sigma-u_{\sigma,h}) &\leq C \norm{S^*\lambda_{h,u_\sigma}-S^*\lambda_{u_\sigma}}_{L^2(\Omega)} \norm{ u_\sigma-u_{\sigma,h} }_{L^2(\Omega)} \nonumber \\
&\leq C \norm{ \lambda_{h,u_\sigma} - \lambda_{u_\sigma} }_{\mathcal{M}(\Omega)} \norm{ u_\sigma-u_{\sigma,h} }_{L^2(\Omega)} \nonumber \\
&= C \left ( \sum_{\omega \in I} \abs{ S_h u_\sigma(\omega) -S u_\sigma(\omega) } \right) \norm{ u_\sigma-u_{\sigma,h} }_{L^2(\Omega)} \nonumber \\
&\leq C \norm{ S_h u_\sigma-S u_\sigma }_{L^\infty(\Omega)} \norm{ u_\sigma-u_{\sigma,h} }_{L^2(\Omega)} \nonumber \\
&\leq C h^{2-\frac{n}{2}} \norm{u_\sigma}_{L^2(\Omega)} \norm{ u_\sigma-u_{\sigma,h} }_{L^2(\Omega)} \nonumber \\
&\leq C h^{2-\frac{n}{2}} \norm{ u_\sigma-u_{\sigma,h} }_{L^2(\Omega)} \label{eqn:split2} 
\end{align}
with $C$ independent of $\sigma$ and $h$. So
\[
(S^*\lambda_{h,u_\sigma}-S^*\lambda_{u_\sigma}, u_\sigma-u_{\sigma,h}) \leq C h^{2-\frac{n}{2}}  \norm{u_\sigma-u_{\sigma,h}}_{L^2(\Omega)},
\]
and using this in (\ref{eqn:splitted}) completes the proof. 
\end{proof}

Combining Lemmas \ref{lem:err1} and \ref{lem:err2} gives Theorem~\ref{thm:main}. A consequence of the theorem is that in 2 dimensions by taking $\sigma=h^2$ we can get an $O(h)$ error estimate for the explicitly discretised problem $(\mathrm{M}1_h)$. In this case the state is on a triangulation of size $O(h)$ and the control is on a triangulation of size $O(h^2)$ (i.e.\ the control space on a finer triangulation than the state). Even though a finer triangulation is involved, the PDEs are posed on the state space to it is reasonable to think of this error estimate as $O(h)$. 

Note that Theorem~\ref{thm:main22} can be proved using the same sequence of calculations and bounds as Lemma~\ref{lem:err2}. To see this observe that if we replace $U_{ad,\sigma}$ by $U_{ad}$ in both (\ref{eqn:controlprob3}) and $(\mathrm{M}1_h)$, then $u_\sigma$ solves the continuous problem (\ref{eqn:controlprob}) and $u_{\sigma,h}$ solves the implicitly discretised problem $(\mathrm{M}2_h)$. 

\begin{remark} \label{rem:obs}
As we noted in Remark~\ref{rem:discrep}, sometimes $(\mathrm{M}1_h)$ is equivalent to $(\mathrm{M}2_h)$. In these cases (e.g.\ when there are no active control constraints and $V_h \subset U_{\sigma}$) Theorem~\ref{thm:main22} gives error estimates for $(\mathrm{M}1_h)$. This observation proves Corollary~\ref{cor:thm12}. In particular it gives an estimate for $(\mathrm{M}1_h)$ when $n=3$ without control constraints, which Theorem~\ref{thm:main} does not provide.
\end{remark}

\begin{remark}
Using this approach to the numerical analysis, the error estimate analogous to Theorem~\ref{thm:main} for a control problem with an $L^2(\Omega)$ fidelity term (instead of one containing point evaluations) is $O(\sigma + h^2)$ (see \cite{Casas2003}).
\end{remark}


\subsection{Approach 2}
\label{sec:energy}

This error analysis is based on the technique used in \cite{Deckelnick2007} and \cite{Leykekhman2013}. The approach applies to the implicit discretisation $(\mathrm{M}2_h)$ (see (\ref{eqn:controlprob2point2})), and therefore also to the explicit discretisation $(\mathrm{M}1_h)$ (see (\ref{eqn:controlprob2point})) when these discrete problems are equivalent (see Remark~\ref{rem:discrep}). However it does not apply to $(\mathrm{M}1_h)$ in general.

The key ingredient of Approach~2 is bounding the difference between the continuous reduced objective functional applied to the discrete and continuous optimal controls, and similarly for the discrete reduced objective functional. Instead of needing error estimates for the control-to-state operator and its adjoint, which were required to prove Theorem~\ref{thm:main22}, this approach only uses the strong supremum norm error estimate (\ref{eqn:precise}). It also does not require the manipulation of measures. As a result this approach is mathematically simpler than Approach~1, but it still allows us to prove the same error estimate as in Theorem~\ref{thm:main22} (modulo $\eps$).




\begin{theorem}
\label{thm:main2}
Let $u$ be a solution of (\ref{eqn:controlprob}) and $u_h$ be a solution of $(\mathrm{M}2_h)$ (see (\ref{eqn:controlprob2point2})). Then for any $\eps>0$,
\[
\norm{u-u_h}_{L^2(\Omega)} \leq C(\eps) h^{2-\frac{n}{2}-\eps} 
\]
with $C$ independent of $h$. 
\end{theorem}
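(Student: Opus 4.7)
The plan is to exploit the $\nu$-strong convexity of both reduced objective functionals in the $L^2(\Omega)$ norm. Optimality of $u$ for $\hat J$ over $U_{ad}$ combined with strong convexity gives $\hat J(u_h) - \hat J(u) \geq \tfrac{\nu}{2}\norm{u - u_h}^2_{L^2(\Omega)}$, and similarly $\hat J_h(u) - \hat J_h(u_h) \geq \tfrac{\nu}{2}\norm{u - u_h}^2_{L^2(\Omega)}$ by optimality of $u_h$ for $\hat J_h$. Adding these reduces the proof to bounding
\[
\nu \norm{u - u_h}^2_{L^2(\Omega)} \leq \big[\hat J - \hat J_h\big](u_h) - \big[\hat J - \hat J_h\big](u).
\]
Notice in particular that the approach is manifestly free of any manipulation of measures, in contrast with Approach~1.

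Next I would unfold the right-hand side. Using $(y_1 - g)^2 - (y_2 - g)^2 = (y_1 - y_2)(y_1 + y_2 - 2g)$ pointwise at each $\omega \in I$ with $y_1 = S\eta(\omega)$, $y_2 = S_h\eta(\omega)$, followed by the identity $ab - cd = (a - c)b + c(b - d)$ with $a = (S - S_h)u_h(\omega)$ and $c = (S - S_h)u(\omega)$, the right-hand side rewrites as
\begin{align*}
&\tfrac{1}{2}\sum_{\omega \in I} (S - S_h)(u_h - u)(\omega)\,\big[Su_h + S_h u_h - 2g_\omega\big](\omega) \\
&\quad + \tfrac{1}{2}\sum_{\omega \in I} (S - S_h)u(\omega)\,(S + S_h)(u_h - u)(\omega).
\end{align*}
Every factor involving $(S - S_h)$ is now primed for the strong supremum norm estimate \eqref{eqn:precise}, while each remaining factor is either uniformly bounded or of order $\norm{u - u_h}_{L^2(\Omega)}$.

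Now I would apply \eqref{eqn:precise}: for $q' \in (\tfrac{2n}{n+2}, \tfrac{n}{n-1})$, $\norm{(S - S_h)\eta}_\infty \leq C(q') h^{3 - n/q'} \norm{\eta}_{W^{1,q'}(\Omega)}$. Corollary~\ref{cor:reg} supplies a uniform $W^{1,q'}(\Omega)$ bound on $u$. The main analytical obstacle is obtaining an analogous uniform bound on $u_h$. Since $u_h = \mathbb{P}_{[a,b]}(-p_h/\nu)$ and $\mathbb{P}_{[a,b]}$ maps $W^{1,q'}(\Omega)$ into itself with a controlled constant (as used in the proof of Corollary~\ref{cor:reg}), this reduces to a uniform $W^{1,q'}$ bound on the discrete adjoint $p_h = S_h^* \lambda_{h, u_h}$. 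The $\mathcal{M}(\Omega)$-norm of $\lambda_{h, u_h}$ is uniformly bounded by the calculation \eqref{eqn:mbound} applied with $u_h$ in place of $u_\sigma$, so what is needed is a discrete analog of the stability estimate \eqref{eqn:stabbb}, i.e.\ $W^{1,q'}$-stability of the Galerkin problem \eqref{eqn:adjdisc} with measure-valued data; this type of estimate is exactly what the theory in \cite{Scott1973,Casas1985} and \cite{Leykekhman2013} delivers under the present mesh assumptions.

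Granting these uniform bounds, $\norm{(S - S_h)(u_h - u)}_\infty$ and $\norm{(S - S_h)u}_\infty$ are each of order $h^{3 - n/q'}$, while $|Su_h + S_h u_h - 2g_\omega|(\omega) \leq C$ and $\norm{(S + S_h)(u_h - u)}_\infty \leq C\norm{u - u_h}_{L^2(\Omega)}$ by \eqref{eqn:cts} and Corollary~\ref{cor:prelim}. Combining yields
\[
\nu \norm{u - u_h}^2_{L^2(\Omega)} \leq C(q') h^{3 - n/q'} + C(q') h^{3 - n/q'} \norm{u - u_h}_{L^2(\Omega)},
\]
and Young's inequality absorbs the second term into the left-hand side, leaving $\norm{u - u_h}^2_{L^2(\Omega)} \leq C(q') h^{3 - n/q'}$. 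Taking $q' = n/(n - 1 + 2\eps)$ produces the exponent $(3 - n/q')/2 = 2 - \tfrac{n}{2} - \eps$. The $\eps$ loss is intrinsic to this strategy, arising because $q'$ must remain strictly below $n/(n-1)$, which is the endpoint of validity of both \eqref{eqn:stabbb} and its discrete counterpart.
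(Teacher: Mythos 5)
Your proposal is correct and follows essentially the same route as the paper's Approach~2: strong convexity together with the two first-order optimality conditions reduces the problem to bounding $[\hat J - \hat J_h](u_h) - [\hat J - \hat J_h](u)$, which is then controlled by the strong supremum-norm estimate (\ref{eqn:precise}) combined with uniform $W^{1,q'}(\Omega)$ bounds on $u$ and on $u_h = \mathbb{P}_{[a,b]}(-\frac{1}{\nu}p_h)$ (via discrete adjoint stability), followed by the choice $q' = \frac{n}{n-1+2\eps}$. The only deviation is cosmetic: the paper bounds $\abs{\hat J(u)-\hat J_h(u)}$ and $\abs{\hat J(u_h)-\hat J_h(u_h)}$ separately, whereas you regroup the difference algebraically and absorb the $\norm{u-u_h}_{L^2(\Omega)}$-dependent piece by Young's inequality, arriving at the same final estimate.
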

\begin{proof}
First observe that
\begin{align} \label{eqn:jcomp}
\hat{J}(u_h)-\hat{J}(u) =& \frac{1}{2} \sum_{\omega \in I} (Su_h-Su)(\omega)^2 + \frac{\nu}{2} \norm{ u_h-u }^2_{L^2(\Omega)} \nonumber \\ &+ \sum_{\omega \in I} (Su_h-Su)(Su-g_\omega)(\omega) + \nu (u,u_h-u) \nonumber \\
\geq& \frac{1}{2} \sum_{\omega \in I} (Su_h-Su)(\omega)^2 + \frac{\nu}{2} \norm{ u_h-u }^2_{L^2(\Omega)},
\end{align}
since the optimality conditions imply that
\[
\sum_{\omega \in I} (Su_h-Su)(Su-g_\omega)(\omega) = a(Su_h-Su,p)=(u_h-u,p)\geq-\nu (u_h-u,u).
\]
Similarly
\begin{align} \label{eqn:jhcomp}
\hat{J}_h(u)-\hat{J}_h(u_h) \geq \frac{1}{2} \sum_{\omega \in I} (S_h u_h-S_h u)(\omega)^2 + \frac{\nu}{2} \norm{ u_h-u }^2_{L^2(\Omega)}.
\end{align}
Note that the final inequality in this calculation holds for $(\mathrm{M}2_h)$ but not for $(\mathrm{M}1_h)$ without additional assumptions.

So combining (\ref{eqn:jcomp}) and (\ref{eqn:jhcomp}) we get
\begin{align} 
\nu \norm{u-u_h}^2_{L^2(\Omega)} &\leq \hat{J}(u_h)-\hat{J}(u) + \hat{J}_h(u)-\hat{J}_h(u_h) \nonumber \\
&\leq \abs{\hat{J}(u) - \hat{J}_h(u)} + \abs{\hat{J}(u_h) -\hat{J}_h(u_h) }. \label{eqn:engbnd}
\end{align}
We can bound each of the terms on the right hand side of this inequality. Note that
\begin{align}
\abs{\hat{J}(u)-\hat{J}_h(u)} &= \abs{\frac{1}{2} \sum_{\omega \in I} (Su(\omega)-g_\omega)^2 - \frac{1}{2} \sum_{\omega \in I} (S_h u(\omega)-g_\omega)^2} \nonumber \\
&=\abs{\frac{1}{2} \sum_{\omega \in I} (Su-S_hu)(Su-g_\omega+S_hu-g_\omega)(\omega)} \nonumber \\
&\leq C \norm{ Su-S_hu }_\infty (\norm{Su}_\infty + \norm{S_hu}_\infty + \max_{\omega \in I} \abs{g_\omega}) \nonumber \\
&\leq C \norm{ Su-S_hu }_\infty ( \norm{u}_{L^2(\Omega)} + 1) . \nonumber 
\end{align}
So (\ref{eqn:precise}) gives that for all $q' \in (\frac{2n}{n+2}, \frac{n}{n-1} )$,
\begin{align}
\Big |\hat{J}(u)-\hat{J}_h(u) \Big | 
&\leq C(q')h^{3-\frac{n}{q'}} \norm{u}_{W^{1,q'}(\Omega)}( \norm{u}_{L^2(\Omega)} + 1) \nonumber \\ 
&\leq C(q')h^{3-\frac{n}{q'}}. \label{eqn:ucomp} 
\end{align}
In the same way we get that for all $q' \in (\frac{2n}{n+2}, \frac{n}{n-1} )$,
\begin{align} 
\abs{\hat{J}(u_h)-\hat{J}_h(u_h)} &\leq C \norm{ Su_h-S_hu_h }_\infty (\norm{Su_h}_\infty + \norm{S_hu_h}_\infty + \max_{\omega \in I} \abs{g_\omega}) \nonumber \\
&\leq C(q') h^{3-\frac{n}{q'}} \norm{u_h}_{W^{1,q'}(\Omega)}( \norm{u_h}_{L^2(\Omega)} + 1). \label{eqn:uhcomp} 
\end{align}
Since $u_h = \mathbb{P}_{[a,b]} (-\frac{1}{\nu}p_h)$ we have $\norm{u_h}_{W^{1,q'}(\Omega)} \leq C \norm{p_h}_{W_0^{1,q'}(\Omega)}$, and the same calculation as in the beginning of Lemma~\ref{lem:bounds} gives that
\[
\norm{p_h}_{W_0^{1,q'}(\Omega)} \leq C(q')
\]
independently of $h$. Combining this, (\ref{eqn:engbnd}), (\ref{eqn:ucomp}) and (\ref{eqn:uhcomp}) gives
\[
\norm{u-u_h}^2_{L^2(\Omega)} \leq C(q') h^{3-\frac{n}{q'}}.
\]
Now for any $\eps>0$ we can set 
\[
 q'=\frac{n}{n-1+2\eps},
\]
which completes the proof of the theorem.
\end{proof}

\begin{remark}
In this proof we used the strong supremum norm estimate (\ref{eqn:precise}) rather than (\ref{eqn:errinf}). This cannot be used to improve the estimates from Approach~1 in Section~\ref{sec:semidisc}; supremum norm estimates are not used in Lemma~\ref{lem:err1}, and in Lemma~\ref{lem:err2} we can improve the bound in (\ref{eqn:split2}) but the error would still be dominated by the $h^{{2-\frac{n}{2}} }$ term in (\ref{eqn:split1}).
\end{remark}

\subsection{Forcing term}
\label{sec:force}

We did not include a forcing term in our write up in order to simplify the presentation. However all the results we have proved still hold if we include a forcing term $f$ in the state equation with the regularity $f \in W_0^{1,q'}(\Omega)$ for all $q' \in (\frac{2n}{n+2}, \frac{n}{n-1} )$. In particular, if we replace (\ref{eqn:state}) by
\begin{equation}
a(y,v) = (\eta+f,v) \quad \forall v \in H_0^1(\Omega), \label{eqn:statenew}
\end{equation}
and consider a control problem of the form
\begin{equation*}
\begin{aligned}
\min \quad &J(y,\eta) := \frac{1}{2}\sum_{\omega \in I} (y(\omega)-g_\omega)^2 + \frac{\nu}{2} \norm{\eta}^2_{L^2(\Omega)} \\
\text{over } \quad &C_0(\Omega) \times L^2(\Omega) \\
\text{s.t. } \quad &(\ref{eqn:statenew}) \text{ holds} \\
\text{and } \quad &\eta \in U_{ad} :=\{\eta \in L^2(\Omega) : a \leq \eta \leq b \text{ a.e.\ in } \Omega \}
\end{aligned}
\end{equation*}
with all other assumptions the same as in (\ref{eqn:controlprob}). This problem has the reduced form
\begin{equation}
\label{eqn:probwithforce}
\begin{aligned}
\min \quad &\hat{J}(\eta) := \frac{1}{2} \sum_{\omega \in I} (S(\eta+f)(\omega)-g_\omega)^2 + \frac{\nu}{2} \norm{\eta}^2_{L^2(\Omega)} \\
\text{over } \quad &\eta \in U_{ad},
\end{aligned}
\end{equation}
where $S$ is as defined previously. For this problem we can construct non-trivial examples with explicitly known solutions (see Section~\ref{sec:exact}), which we cannot do for the problem without a forcing term. So after extending our theory to include a forcing term we are able to perform some numerical experiments to verify that our error estimates are observed in practice. 

The forcing term means that the mapping from $\eta$ to $y$ defined by the state equation is no longer linear but instead affine. This difference can be handled with only minor modifications to our problem formulations and proofs, which we now mention: The optimal control problem with forcing still has a unique solution (see e.g.\ Theorem~1.45 in \cite{Hinze2009}). Corollary~1.3 in \cite{Hinze2009} gives that $u$ solves (\ref{eqn:probwithforce}) if and only if $u$ solves (\ref{eqn:system}) with $Su$ replaced by $S(u+f)$ i.e.\ for all $q' \in (\frac{2n}{n+2}, \frac{n}{n-1} )$ there exist $p \in W_0^{1,q'}(\Omega)$ such that
\begin{align*}
&u \in U_{ad}, \quad (p - \nu u, v - u) \geq 0  &&\forall v \in U_{ad}, \\
&a(v,p)= \sum_{\omega \in I} (S(u + f)(\omega)-g_\omega)v(\omega) &&\forall v \in W_0^{1,q}(\Omega).
\end{align*}
The same reasoning applies to the discrete problems and their optimality conditions with the obvious modifications. In particular the optimality conditions for the discrete problem $(\mathrm{M}2_h)$  (see (\ref{eqn:controlprob2point2})) with the inclusion of the forcing term are: There exists a $p_h \in V_h$ such that
\begin{subequations}
\label{eqn:optforce}
\begin{align}
&u_h \in U_{ad}, \quad (p_h + \nu u_h,v-u_h) \geq 0 && \forall v \in U_{ad}, \label{eqn:optforcea}  \\
&a(v_h,p_h) = \sum_{\omega \in I} (S_h(u_h+f)(\omega)-g_\omega)v_h(\omega) && \forall v_h \in V_h. \label{eqn:optforceb} 
\end{align}
\end{subequations}
Theorems~\ref{thm:main}, \ref{thm:main2} and \ref{thm:main22} still hold with same methods of proof; the $f$ term slightly alters the calculations but does not cause problems, since it follows immediately from the supremum norm error estimate (\ref{eqn:precise}) that for $\eta \in W_0^{1,q'}(\Omega)$ with $q' \in (\frac{2n}{n+2}, \frac{n}{n-1} )$,
\[
\norm{(S-S_h)(\eta+f)}_\infty \leq C(q') h^{3-\frac{n}{q'}} \norm{\eta+f}_{W_0^{1,q'}(\Omega)}.
\]







\section{Numerical results}
\label{sec:pointnumerics}

In this section we develop a numerical method for solving $(\mathrm{M2}_h)$ with modification to include a forcing term (see (\ref{eqn:optforce})) and show that the a priori $L^2(\Omega)$ error estimates that we proved for this discrete problem are numerically realised. In order to do this we solve simple examples of the optimal control problems with explicitly known solutions. We also include some simulations for more interesting problems for which the exact solutions are not known.

\subsection{Numerical method}
\label{sec:nummeth}

We only develop a numerical method for solving $(\mathrm{M}2_h)$ because we are able to prove better error estimates for this discrete problem. In particular, we do not have an error estimate for  $(\mathrm{M}1_h)$ when $n=3$ with control constraints. Perhaps such an estimate could be proved in other ways, but we cannot easily experimentally investigate if it holds either; we only have explicit solutions (which allow us to reliably test error estimates) when there are no active control constraints. We will now describe the numerical method. 


If $u_h$ solves (\ref{eqn:optforce}), then by substituting $u_h=\mathbb{P}_{[a,b]}(-\frac{1}{\nu}p_h)$ we get that the state $y_h:=S_h u_h \in V_h$ and the adjoint variable $p_h \in V_h$ solve
\begin{equation}
\label{eqn:newtonfunpoint}
\left( \begin{array}{c} 
a(y_h, v_h ) - (-\frac{1}{\nu} p_h + (a + \frac{1}{\nu} p_h )^+ - (-\frac{1}{\nu} p_h
-b )^+ - f, v_h ) \\
 a(w_h,p_h) - \sum_{\omega \in I} (y_h(\omega)-g_\omega) w_h (\omega) \end{array} \right) = 0
\end{equation}
for all $v_h,w_h \in V_h$. Here $v^+$ denotes the nonnegative part of $v$ i.e.\ $\max(0,v)$. Once this problem has been solved, the $u_h$ solving (\ref{eqn:system2apoint2}) can easily be determined from $p_h$ by setting $u_h=\mathbb{P}_{[a,b]}\big (-\frac{1}{\nu} p_h \big)$. We will now describe a numerical method for solving (\ref{eqn:newtonfunpoint}) with and without control constraints.


\subsubsection{No control constraints}

In the case of no control constraints ($b=-a=\infty$) the nonlinear $\max(0,\cdot)$ terms drop out, leaving a linear problem. Let $y_h = \sum_{z \in \mathcal{N}} y_z \varphi_z$ and $p_h = \sum_{z \in \mathcal{N}} p_z \varphi_z$, where $\varphi_z$ are the usual nodal basis functions of $V_h$ (defined by $\varphi_z(\bar{z}) = \delta_{z\bar{z}}$ for $\bar{z} \in \mathcal{N}$, where $\delta_{z \bar{z}}$ denotes the Kronecker delta and $\mathcal{N}$ is the set of interior vertices of the triangulation), and $y_z$ and $p_z$ are the coefficients corresponding to the basis functions. As we have no control constraints, testing (\ref{eqn:newtonfunpoint}) with $v_h = \varphi_z$ and $w_h = \varphi_{\bar{z}}$ for all  $z, \bar{z} \in \mathcal{N}$ leads to a system of linear equations of real variables. In particular, let $\bar{y}$ and $\bar{p}$ be vectors of coefficients defined by $\bar{y}_z = y_z$ and $\bar{p}_z = p_z$ for $z \in \mathcal{N}$ i.e.\ use the set of interior vertices as an index. Then we can solve (\ref{eqn:newtonfunpoint}) by solving the system of linear equations
\[
\begin{pmatrix}
A & \frac{1}{\nu}M \\
-\sum_{\omega \in I}M_\omega & A \\
\end{pmatrix}
\begin{pmatrix}
\bar{y} \\ \bar{p}
\end{pmatrix}
=
\begin{pmatrix}
\bar{F} \\ -\sum_{\omega \in I} \bar{G}_\omega
\end{pmatrix},
\]
where matrices $A$, $M$ and $M_\omega$ and vectors $\bar{F}$ and $\bar{G}_\omega$ are defined by
\begin{align*}
&A_{z \bar{z}} = a(\varphi_z, \varphi_{\bar{z}}), \quad M_{z \bar{z}} = (\varphi_z, \varphi_{\bar{z}}), \quad (M_\omega)_{z \bar{z}} = \varphi_z(\omega)\varphi_{\bar{z}}(\omega) \quad &&\forall z, \bar{z} \in \mathcal{N}, \\
&\bar{F}_z = (f,\varphi_z), \quad (\bar{G}_\omega)_z = g_\omega \varphi_z(\omega) \quad &&\forall z \in \mathcal{N}.
\end{align*}
As the basis functions $\varphi_z$ are piecewise linear with small support, the integrals that form the elements of the matrices and vectors are straightforward to compute, assuming $A$ and $f$ have a simple form (or else numerical integration of some terms may be required, which we discuss later). The matrix in this system of equations is sparse and so the system can be solved efficiently.

\subsubsection{Control constraints}

In the case of control constraints the nonlinear $\max(0,\cdot)$ terms mean that we can no longer use the above approach to construct a linear system of equations of real variables. Instead we will solve the problem iteratively using a Newton-type method. Let $F_h :V_h \times V_h \to V_h^* \times V_h^*$ with $F_h(y_h,p_h)(w_h, v_h)$ defined by the left hand side of (\ref{eqn:newtonfunpoint}). Then it can be written as
\begin{equation}
\label{eqn:newtf}
F_h(y_h,p_h) = 0 \quad \text{ in } V_h^* \times V_h^*.
\end{equation}
The $\max(0,\cdot)$ terms mean that $F_h$ is not Fr\'{e}chet differentiable. However we can apply a generalised Newton method called the semismooth Newton method (see e.g.\ \cite{Ulbrich2002} and \cite{Hintermuller2009}). This amounts to applying the Newton method in the usual way but taking the derivative of $\max(0,x)$ to be
\[
{\max}'(0,x) = 
\begin{cases}
1 &x \geq 0, \\
0 &x < 0. \\
\end{cases}
\]
So we take an initial guess $y_h^0, p_h^0$ then apply until the convergence the semismooth Newton iteration
\[
\left (\begin{array}{c}
y_h^{n+1}\\ p_h^{n+1}
\end{array} \right )
= \left ( \begin{array}{c}
y_h^{n}\\ y_h^{n}
\end{array} \right )
+ \left ( \begin{array}{c}
\delta y_h\\ \delta p_h
\end{array} \right ),
\]
where $\delta y_h$, $\delta p_h$ solve
\begin{equation}
\label{eqn:sysss}
\begin{aligned}
&F_h' (y_h^n,p_h^n)(\delta y_h, \delta p_h) \\
&=\left( \begin{array}{c} 
a(\delta y_h, \cdot ) - \frac{1}{\nu} \Big( \big (-1 + \max'(0,a + \frac{1}{\nu} p^n_h ) + \max'(0,-\frac{1}{\nu} p^n_h-b )  \big )  \delta p_h, \cdot \Big ) \\
 a(\cdot,\delta p_h) - \sum_{\omega \in I} \delta y_h(\omega)(\cdot)(\omega) \end{array} \right) \\
&= -F_h(y^n_h,p^n_h).
\end{aligned}
\end{equation} 
Note that if we have no control constraints the first Newton iteration is equivalent to solving (\ref{eqn:newtonfunpoint}).


As before we can represent $\delta y_h$ and $\delta p_h$ as a sum of basis functions weighted by coefficients, and testing (\ref{eqn:sysss}) with the basis functions allows us to construct the following system of linear equations of real variables:
\[
\begin{pmatrix}
A & \frac{1}{\nu}M_c \\
-\sum_{\omega \in I}M_\omega & A \\
\end{pmatrix}
\begin{pmatrix}
\delta \bar{y} \\ \delta \bar{p}
\end{pmatrix}
=
\begin{pmatrix}
\bar{R}_1 \\ \bar{R}_2
\end{pmatrix},
\]
where
\begin{align*}
(M_c)_{z \bar{z}} &:= (c(x) \varphi_z, \varphi_{\bar{z}}) \quad && \forall z, \bar{z} \in \mathcal{N}, \\
& \text{with } c(x) := 1 - {\max}'(0,a + \frac{1}{\nu} p^n_h(x) ) - {\max}'(0,-\frac{1}{\nu} p^n_h(x)-b ), && \\ 
(\bar{R}_{1})_z &:= -(F_h(y^n_h,p^n_h)_1, \varphi_z), \quad (\bar{R}_{2})_z := -(F_h(y^n_h,p^n_h)_2, \varphi_z) \quad && \forall z \in \mathcal{N}.
\end{align*}
Note that since $p^n_h(x)$ is piecewise linear, the integrals of functions such as ${\max}'(0,a + \frac{1}{\nu} p^n_h ) \varphi_z \varphi_{\bar{z}}$ can be computed exactly. In practice we instead approximate this using a numerical quadrature i.e.\ instead of $ (c(x) \varphi_z, \varphi_{\bar{z}})$ we will compute $Q(c(x) \varphi_z \varphi_{\bar{z}})$, where
\[
Q(\eta):=\sum_{T \in T_h} Q_T(\eta), \quad Q_T(\eta) := \sum_{q=1}^K \hat{w}_q \abs{DF_T(\hat{x}_q)} \eta(F_T(\hat{x}_q) ).
\]
Here $\{(\hat{w}_q,\hat{x}_q)\}_{q=1}^K$ is a collection of $K$ pairs of weights and points on a reference element $\hat{T}$ and $F_T$ is the reference mapping between $\hat{T}$ and $T$. We will use a Gaussian quadrature of high order (large $K$), so $Q(\eta) \approx \int_\Omega \eta(x) \rd x$. We will also use this quadrature rule to approximate $f$ as it may have a form that makes it complicated to integrate by hand. The moderately large error from our discretisation should dominate the smaller error from Gaussian quadrature (as it has good approximation properties), so we do not expect using quadrature to affect the $L^2(\Omega)$ error we observe in practice. Note that using quadrature means that we are not solving  (\ref{eqn:newtonfunpoint}) but rather a close approximation. Although using quadrature is not strictly necessary, 
the implementation without would require us to do additional calculations by hand, particularly in 3 dimensions. In comparison, there is built in support for numerical quadrature in many finite element software packages. 

Define the product space norm for $(z_1,z_2) \in Z \times Z$, where $Z$ is a normed vector space, by  $\norm{(z_1,z_2)}_{Z} = \sqrt{\norm{z_1}^2_Z + \norm{z_2}^2_Z}$. For $z \in H^{-1}(\Omega)$ let $w \in H_0^1(\Omega)$ be defined by
\[
(\nabla w,\nabla v)=\langle z,v \rangle_{H^{-1}(\Omega)} \quad \forall v \in H_0^1(\Omega).
\]
Then
\begin{align*}
\norm{z}_{H^{-1}(\Omega)} &= \sup_{v \in H_0^1(\Omega)} \frac{\langle z, v \rangle_{H^{-1}(\Omega)}}{\norm{v}_{H_0^1(\Omega)}} \\
&= \sup_{v \in H_0^1(\Omega)} \frac{ (\nabla w, \nabla v)} {\norm{v}_{H_0^1(\Omega)}} \\
&= \norm{w}_{H_0^1(\Omega)}.
\end{align*}
This motivates us to iterate the Newton method until the stopping criterion $\norm{F_h(y_h,p_h)}_{Z}$ is small, where for $z_h \in V_h^*$ we define $\norm{z_h}_{Z} := \norm{w_h}_{H_0^1(\Omega)}$ with
\[
w_h \in V_h, \quad (\nabla w_h,\nabla v_h)=\langle z_h,v_h \rangle_{V_h^*} \quad \forall v_h \in V_h.
\]
Note that if $\norm{F_h(y_h,p_h)}_{Z} = 0$ then $(y_h,p_h)$ is the solution to (\ref{eqn:newtf}).
The algorithm we use is stated precisely in Algorithm~\ref{alg:nm} below.

\begin{algorithm}
\caption{Newton method}
\label{alg:nm}
\begin{algorithmic}[1]
\Require $T_h, y_h^0, p_h^0$ and $\textsc{data} = (\Omega, \nu, f, a, b, I, \{y_w\}_{\omega \in I})$ \Comment{$(y_h^0, p_h^0)=(0,0)$}
\While{$\norm{F_h(y_h^k,p_h^k)}_{Z}>\delta$} \Comment{$\delta=1e-8$}
\State Compute $(\delta y_h, \delta p_h)$ by solving (\ref{eqn:sysss}): $F_h' (y_h^k,p_h^k)(\delta y_h, \delta p_h) = - F_h(y_h^k,p_h^k)$.
\State $(y_h^{k+1},p_h^{k+1}) \gets (y_h^k,p_h^k) + (\delta y_h, \delta p_h)$
\State $k \gets k+1$
\EndWhile
\State \textbf{return} $y_h^{k}, p_h^{k}$
\end{algorithmic}
\end{algorithm}

Newton type methods typically offer local superlinear convergence. We do not prove this, but we note in Section~\ref{sec:meshpde} that our algorithm is very effective in practice. On all the problems we tested it provided quadratic mesh independent convergence to the solution even with the bad initial iterate of $(0,0)$.

\subsubsection{Implementation}


As we remarked above, in the case of no control constraints the first iteration of the Newton method solves (\ref{eqn:newtonfunpoint}). So rather than implementing two different numerical methods, we also use Algorithm~\ref{alg:nm} to solve the problem when there are no control constraints.

We implemented Algorithm \ref{alg:nm} in the Distributed and Unified Numerics Environment (DUNE) using DUNE-FEM (see \cite{dunegridpaperI:08, dunegridpaperII:08, dunefempaper:10}). This environment has the advantage that once an algorithm has been implemented, it is straightforward to change features of the implementation that would usually be fixed. For solving the linear systems for each iteration of the Newton method we used the biconjugate gradient stabilised method with an incomplete LU factorisation or Gauss-Seidel preconditioner.

\subsection{Exact solutions}
\label{sec:exact}

We can construct an exact solution for a simple example of the optimal control problem in dimensions 2 and 3 without control constraints. This allows us to verify our error estimates. The key fact we will use to do this is that fundamental solutions of the Laplace equation $-\Laplace y =\delta_{x'}$ are given by
\[
\begin{cases}
-\frac{1}{2 \pi} \log \abs{x-x'} + C &\quad n=2,\\
\frac{1}{4 \pi \abs{x-x'}} + C &\quad n=3.
\end{cases}
\]
So take $\Omega=B_1(0)$, the open unit ball in $\R^n$ centred at the origin, and $I=\{0\}$. Then
\begin{equation*}
p(x)=
\begin{cases}
-\frac{1}{2 \pi} \log \abs{x}(y(0)-g_0) &\quad n=2\\
\frac{1}{4 \pi} (\frac{1}{\abs{x}} -1 ) (y(0)-g_0) &\quad n=3
\end{cases}
\end{equation*}
is the unique $p$ solving (\ref{eqn:systemb}), and $u=-\frac{1}{\nu}p$ (as we have no control constraints). Note that $u$ and $p$ are unbounded, however they are still $L^2(\Omega)$ functions. To see this note that converting to polar and spherical coordinates we have
\begin{align*}
\int_\Omega (\log{\abs{x}})^2 \rd x = \int_0^{2 \pi} \int_0^1 (\log{r})^2 r \, \rd r \, \rd \theta < \infty, \\
\int_\Omega \frac{1}{\abs{x}^2} \rd x= \int_0^{2\pi} \int_0^{\pi} \int_0^1 \sin{\theta} \, \rd r \, \rd \theta \, \rd \varphi < \infty.
\end{align*}
We can now set $y$ to be any function satisfying the boundary conditions (e.g.\ $y(x)=\cos (\frac{\pi \abs{x}}{2})$), and take $f = -\Laplace y - u$. We also set $\nu=1$ and $g_0=y(0)-1$ to simplify the problem and exact solution further.
 
\subsection{2D numerical results}
\label{sec:num2d}

Motivated by the above construction take $\Omega=B_1(0)$, $A=-\Laplace$, $I=\{0\}$, $g_0=0$, $b=-a=\infty$, $\nu=1$, and 
\[
f=\frac{\pi}{4} \left( \frac{2}{\abs{x}} \sin \Big( \frac{\pi \abs{x}}{2} \Big) + \pi \cos \Big ( \frac{\pi \abs{x}}{2} \Big) \right) - \frac{1}{2 \pi} \log \abs{x}.
\]
Then the solution to the control problem is
\begin{align*}
u(x) &= -p(x) = \frac{1}{2 \pi} \log \abs{x}, \\
y(x) &= \cos \Big ( \frac{\pi \abs{x}}{2} \Big ).
\end{align*}
This solution is interesting because the control is singular (infinite) at the prescribed point $(0,0)$ but it is still an $L^2(\Omega)$ function. We solve this problem numerically using the numerical method outlined in Section~\ref{sec:nummeth}, giving Figure~\ref{fig:exact2d}. Note that the solution to the discrete problem must be bounded, even though it is approximating am unbounded function. As a result, the magnitude of the spike in $u_h$ notably increases as the triangulation is refined (but $\norm{u_h}_{L^2(\Omega)}$ is stable).

\begin{figure}[ht]
\centering
\subfigure[$y_h$]{
\includegraphics[width=0.45\textwidth]{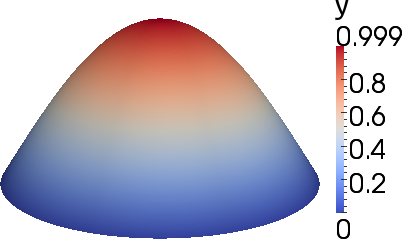}
\label{fig:yexact2d}
}
\subfigure[$u_h=-p_h$]{
\includegraphics[width=0.49\textwidth]{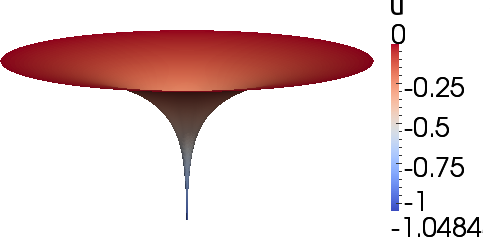}
\label{fig:pexact2d} 
}
\caption[Solution to a PDE point control problem ($n=2$, no control constraints).]{The radially symmetric solution to our 2D problem with explicitly known solution.} 
\label{fig:exact2d}
\end{figure}

The computed $L^2(\Omega)$ errors are in Table~{\rm\ref{tab:n2}}, where $\norm{u-u_h}_{L^2(\Omega)}$ is approximated using a Gaussian quadrature rule of high order, and the experimental order of convergence is defined by
\[
\mathrm{EOC}_h = \frac{\log (\norm{u-u_{h/2}}_{L^2(\Omega)}/\norm{u-u_h}_{L^2(\Omega)})}{\log 2}. 
\]
The data suggest order $h$ convergence for this problem, which agrees
with the estimate we proved in Theorem~\ref{thm:main2}
\begin{table}
\tblcaption{EOCs for the 2D problem with explicitly known solution (see Figure~\ref{fig:exact2d}).}
{%
\begin{tabular}{@{}cccc@{}}
\tblhead{\#DoFs & $h$ & $\norm{u-u_h}_{L^2(\Omega)}$ & $\text{EOC}_h$} 
0.5 & 25 & 0.03258 & - \\
0.25 & 81 & 0.0160362 & 1.0226543 \\
0.125 & 289 & 0.00787259 & 1.0264221 \\
0.0625 & 1089 & 0.00389451 & 1.0153965 \\
0.03125 & 4225 & 0.00193778 & 1.0070370 \\
0.015625 & 16641 & 0.000966977 & 1.0028513 \\
0.0078125 & 66049 & 0.00048313 & 1.0010701
\lastline
\end{tabular}
}
\label{tab:n2}
\end{table}
%

The solution of a more interesting problem including control constraints and more evaluation points is shown on the left hand side of Figure~\ref{fig:eqn2d}. It appears that $p_h$ is approximating an unbounded $p$, though $\norm{p_h}_{L^2(\Omega)}$ is bounded. However $u$ is certainly bounded due to the control constraints. We do not know the exact solution to this problem so we cannot calculate the error exactly. However we can calculate an approximate order of convergence by comparing to the solution on a very fine triangulation i.e.\ $\tilde{u}=u_{h_{\text{fine}}}$ with $h_{\text{fine}}=0.00276214$, which corresponds to 263169 DOFs. So we instead compute 
\begin{equation}
\label{eqn:approxeoc}
\mathrm{EOC}_h = \frac{\log (\norm{\tilde{u}-u_{h/2}}_{L^2(\Omega)}/\norm{\tilde{u}-u_h}_{L^2(\Omega)})}{\log 2}
\end{equation}
for $h \gg h_{\mathrm{fine}}$. We ensure that the fine triangulation is a refinement of the coarser triangulations, so the $L^2(\Omega)$ errors can be computed accurately using a high order Gaussian quadrature. These approximate EOCs can be seen in Table~\ref{tab:n2cc}. They agree with the error estimate we proved for the case of active control constraints in Theorem~\ref{thm:main2}. The slight increase in the EOC for the smallest value of $h$ is expected as we are computing the error against a discrete solution and not the true solution.

\begin{figure}[ht]
\centering
\subfigure[$y_h$ with $b=-a=10$.]{
\includegraphics[width=0.42\textwidth]{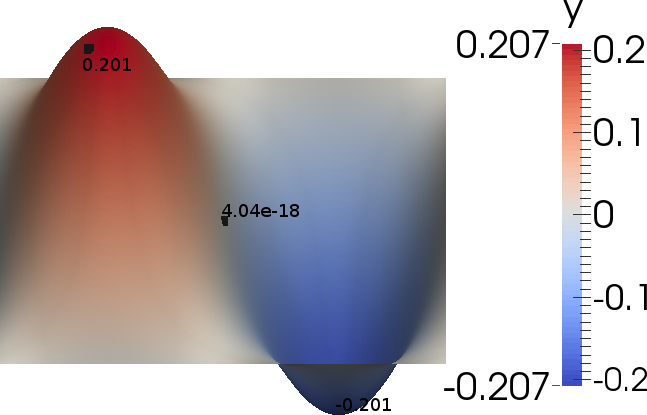}
\label{fig:eqn2dy}
}
\subfigure[$y_h$ with $b=-a=\infty$.]{
\includegraphics[width=0.43\textwidth]{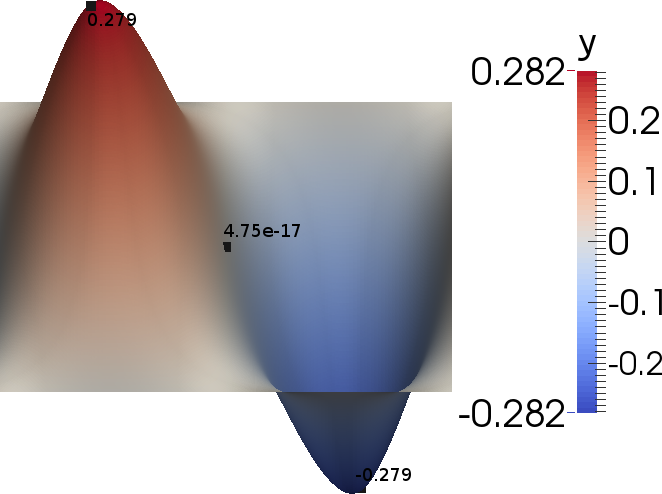}
\label{fig:eqn2dy2}
}
\subfigure[$p_h$ with $b=-a=10$.]{
\includegraphics[width=0.42\textwidth]{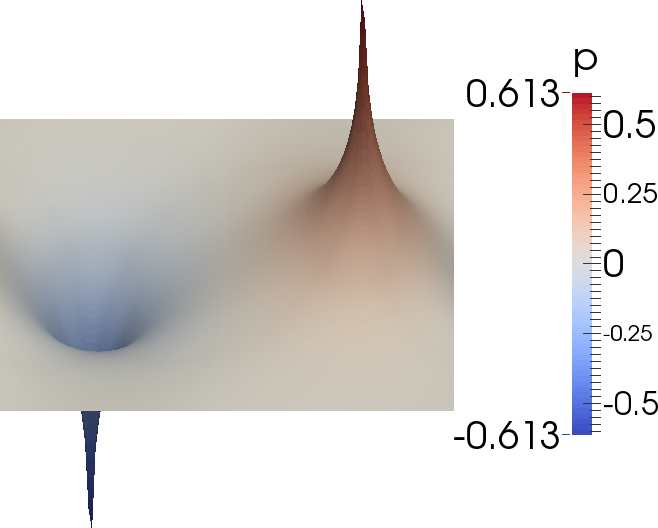}
\label{fig:eqn2dp}
}
\subfigure[$p_h$ with $b=-a=\infty$.]{
\includegraphics[width=0.42\textwidth]{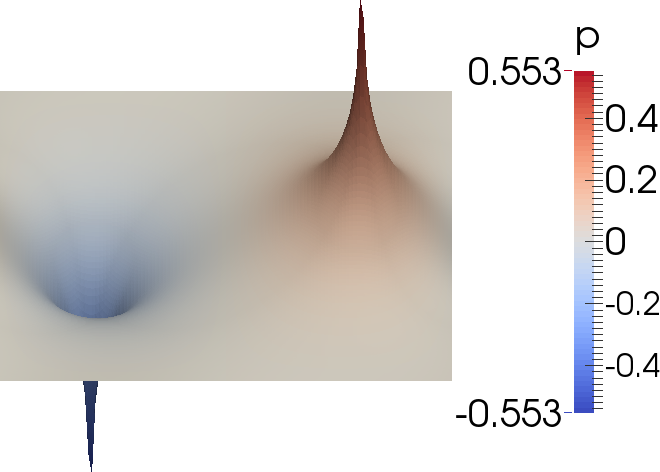}
\label{fig:eqn2dp2}
}
\subfigure[$u_h=\mathbb{P}_{[a,b]}(-\frac{1}{\nu}p_h)$ with $b=-a=10$.]{
\includegraphics[width=0.42\textwidth]{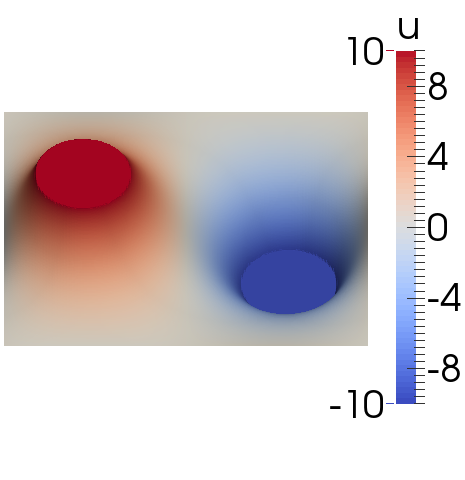}
\label{fig:eqn2du}
}
\subfigure[$u_h=\mathbb{P}_{[a,b]}(-\frac{1}{\nu}p_h)$ with $b=-a=\infty$.]{
\includegraphics[width=0.42\textwidth]{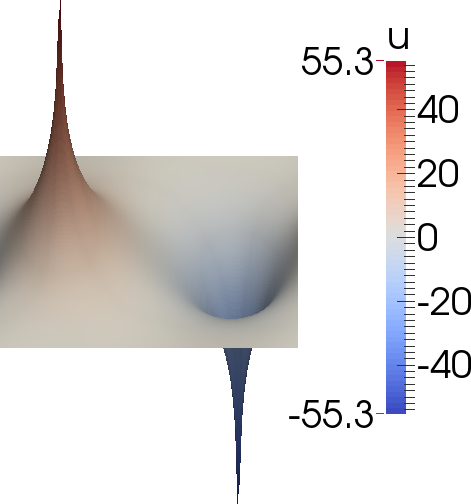}
\label{fig:eqn2du2}
}
\caption[Solution to a PDE point control problem ($n=2$, with and without control constraints).]{Solutions for $\Omega = (0,1)^2$, $A=-\Laplace$, $f=0$, $I=\{(0.2,0.5),$ $(0.5,0.5),$ $(0.8,0.5)\}$, $y_{(0.2,0.5)}=1$, $y_{(0.5,0.5)}=0$, $y_{(0.8,0.5)}=-1$, and $\nu=1e-2$. The solution on the right has $b=-a=10$ and the solution on the left has no control constraints ($b=-a=\infty$). The scale on figures that are side by side is the same. The black dots mark the locations of the points in $I$ and the numbers give the value of $y_h$ at these points.}
\label{fig:eqn2d}
\end{figure}
 
\begin{table}
\tblcaption{EOCs for the 2D problem on the left hand side of Figure~\ref{fig:eqn2d}, which has control constraints.}
{%
\begin{tabular}{@{}cccc@{}}
\tblhead{$h$ & \#DoFs & $\norm{u-u_h}_{L^2(\Omega)}$ & $\text{EOC}_h$}
0.353553 & 25 & 2.8881 & - \\
0.176777 & 81 & 1.51039 & 0.93520339 \\
0.0883883 & 289 & 0.80295 & 0.91153608 \\
0.0441942 & 1089 & 0.409627 & 0.97100093 \\
0.0220971 & 4225 & 0.205786 & 0.99316598 \\
0.0110485 & 16641 & 0.100486 & 1.0341436
\lastline
\end{tabular}
}%
\label{tab:n2cc}
\end{table}
%

On the right hand side of Figure~\ref{fig:eqn2d} we have the solution of the 2D problem we just considered but without the control constraints. We observe that this allows the state to get slightly closer to the prescribed values. In order to get closer still we would need to decrease $\nu$. Figure~\ref{fig:eqn2dcomp} shows a more interesting example with $\nu=1e-4$ (i.e.\ very small). As a result the state takes values very close to the prescribed values, and overshoots the value $1$ on parts of the domain in order to achieve this.

\begin{figure}[ht]
\centering
\subfigure[$y_h$]{
\includegraphics[width=0.47\textwidth]{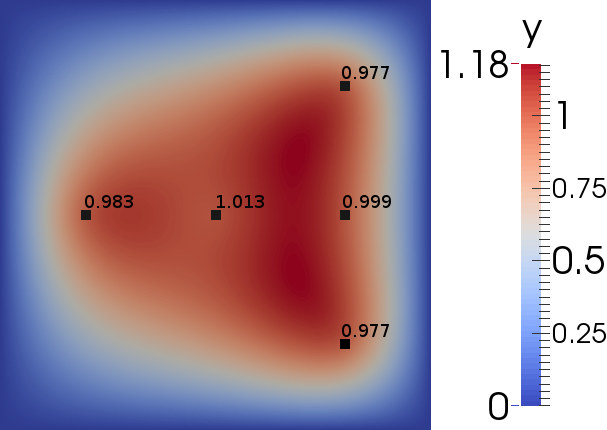}
\label{fig:eqn2dcompy}
}
\subfigure[$y_h$]{
\includegraphics[width=0.47\textwidth]{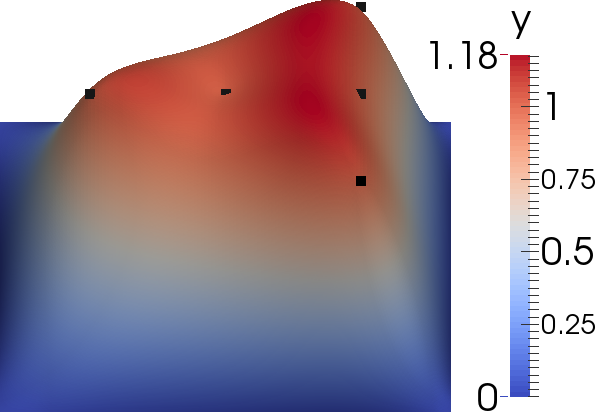}
\label{fig:eqn2dcompy2}
}
\subfigure[$p_h$]{
\includegraphics[width=0.51\textwidth]{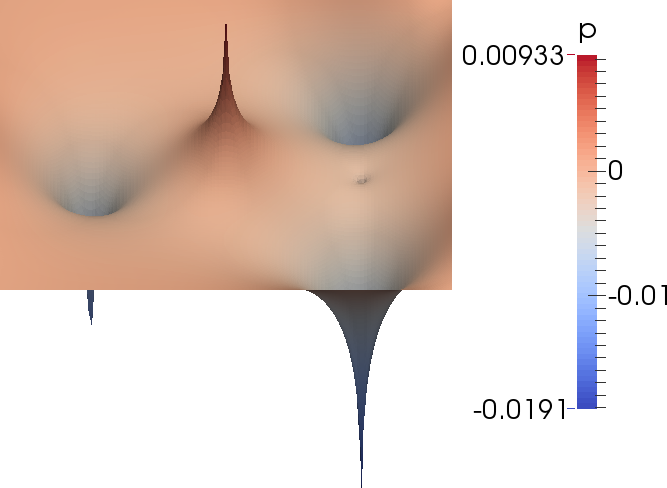}
\label{fig:eqn2dcompp}
}
\subfigure[$u_h = -\frac{1}{\nu} p_h$]{
\includegraphics[width=0.44\textwidth]{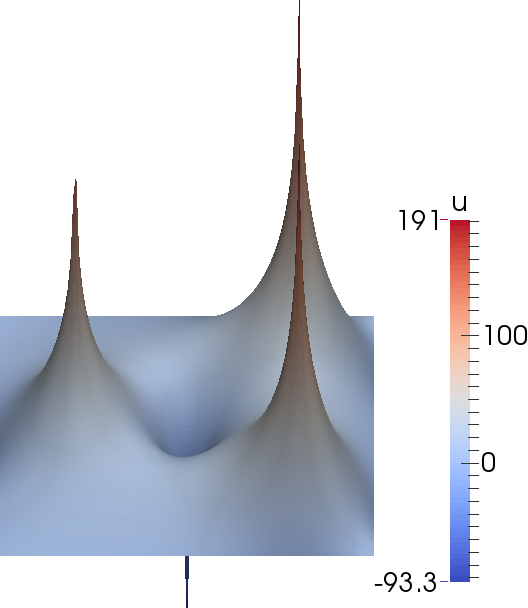}
\label{fig:eqn2dcompu}
}
\caption[Solution to a PDE point control problem ($n=2$, no control constraints).]{Solution to a more interesting example with $\Omega = (0,1)^2$, $A=-\Laplace$, $f=0$, $I=\{(0.2,0.5),$ $(0.5,0.5),$ $(0.8,0.2),$ $(0.8,0.5),$ $(0.8,0.8)\}$, $g_\omega=1$ for all $\omega \in I$, $\nu=1e-4$, and $b=-a=\infty$.\label{fig:eqn2dcomp}}
\end{figure}

\subsection{3D numerical results}
\label{sec:num3d}

Similarly take $\Omega=B_1(0)$, $A=-\Laplace$, $I=\{0\}$, $g_0=0$, $b=-a=\infty$, $\nu=1$, and 
\[
f=\frac{\pi}{4} \left( \frac{4}{\abs{x}} \sin \Big( \frac{\pi \abs{x}}{2} \Big) + \pi \cos \Big ( \frac{\pi \abs{x}}{2} \Big) \right ) + \frac{1}{4 \pi} \Big (\frac{1}{\abs{x}}-1 \Big ).
\]
Then the solution to the control problem is
\begin{align*}
u(x) &= -p(x) = -\frac{1}{4 \pi} \Big (\frac{1}{\abs{x}} -1 \Big ), \\
y(x) &= \cos \Big ( \frac{\pi \abs{x}}{2} \Big ).
\end{align*}

\begin{figure}[ht]
\centering
\subfigure[$y_h$]{
\includegraphics[width=0.47\textwidth]{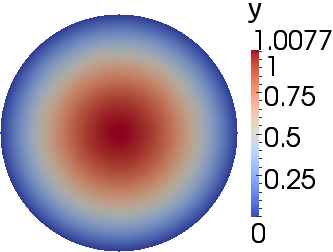}
\label{fig:y3d}
}
\subfigure[$u_h=-p_h$]{
\includegraphics[width=0.45\textwidth]{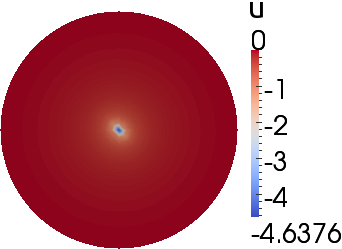}
\label{fig:p3d}
}
\caption[Solution to a PDE point control problem ($n=3$, no control constraints).]{A slice passing through the origin of the radially symmetric solution to our 3D problem with explicitly known solution.}
\label{fig:3d}
\end{figure}

This solution can be seen in Figure~\ref{fig:3d}. We observe order $\sqrt{h}$ convergence (see Table~\ref{tab:n3}), which again agrees with the estimate we proved in Theorem~\ref{thm:main2}.

\begin{table}
\tblcaption{EOCs to our 3D problem with explicitly known solution (see Figure~\ref{fig:3d}).}
{%
\begin{tabular}{@{}cccc@{}}
\tblhead{$h$ & \#DoFs & $\norm{u-u_h}_{L^2(\Omega)}$ & $\text{EOC}_h$}
1 & 27 & 0.103658 & - \\
0.5 & 125 & 0.0719594 & 0.52657640 \\
0.25 & 729 & 0.0474726 & 0.60008809 \\
0.125 & 4913 & 0.0322929 & 0.55587806 \\
0.0625 & 35937 & 0.0225399 & 0.51873589
\lastline
\end{tabular}
}%
\label{tab:n3}
\end{table}
%

\subsection{Mesh independence}
\label{sec:meshpde}

We finish by justifying the effectiveness of our numerical method. When we have no control constraints the problem is linear and the Newton method always finds the exact solution in a single iteration. When we have control constraints the problem is nonlinear and  we still have good mesh independence properties; the number of Newton iterations needed for convergence does not increase as $h$ is decreased. See Table \ref{tab:meshitspde} for the number of Newton iterations needed to solve the control constrained example from Figure~\ref{fig:eqn2d} using the initial iterate $(0,0)$.

We also observe quadratic convergence of the Newton method on average. See Table \ref{tab:newtconpde} for the residuals of the Newton method, again for the control constrained example from Figure~\ref{fig:eqn2d}. In the table
\begin{equation}
\label{eqn:eocnewt}
\mathrm{EOC}_k := \frac{\log(\delta_{k+1}/\delta_k)}{\log(\delta_k/\delta_{k-1})}, \quad \delta_k := \norm{F_h(y_h^k,p_h^k)}_{H^{-1}(\Omega)}.
\end{equation}

\begin{table}
\parbox{.4\linewidth}{
\centering
\tblcaption{Number of iterations of Newton method.}
{%
\begin{tabular}{@{}cc@{}}
\tblhead{$h$ & \# iterations} 
0.0883883 & 3 \\
0.0441942 & 3 \\
0.0220971 & 3 \\
0.0110485 & 3 \\
0.00552427 & 3
\lastline
\end{tabular}
}%
\label{tab:meshitspde}
}
%
\parbox{.45\linewidth}{
\centering
\tblcaption{Convergence rate of Newton method.}
{%
\begin{tabular}{@{}ccc@{}}
\tblhead{Iteration $k$ & $\norm{F_h(u^k_h)}_{Z}$ & $\mathrm{EOC}_k$}
0 & $0.00285272$ & 0 \\
1 & $4.38339\times 10^{-5}$ & 0.85936125 \\
2 & $1.21172\times 10^{-6}$ & 2.4577166 \\
3 & $1.79175\times 10^{-10}$ & 0 
\lastline
\end{tabular}
}%
\label{tab:newtconpde}
}
\end{table}

\bibliographystyle{IMANUM-BIB}
\bibliography{library}

\end{document}